\newcommand\version{November 21, 2011}
\newtheorem{theorem}{Theorem}[section]
\newtheorem{proposition}[theorem]{Proposition}
\newtheorem{lemma}[theorem]{Lemma}
\newtheorem{corollary}[theorem]{Corollary}
\theoremstyle{definition}
\theoremstyle{remark}
\numberwithin{equation}{section}
\newcommand{\C}{\mathbb{C}}
\newcommand{\const}{\mathrm{const}\ }
\renewcommand{\epsilon}{\varepsilon}
\newcommand{\Hei}{\mathbb{H}}
\newcommand{\loc}{{\rm loc}}
\newcommand{\N}{\mathbb{N}}
\renewcommand{\phi}{\varphi}
\newcommand{\R}{\mathbb{R}}
\newcommand{\Sph}{\mathbb{S}}
\DeclareMathOperator{\im}{Im}
\DeclareMathOperator{\re}{Re}
\begin{document}

\title[Sharp inequalities --- \version]{Sharp constants in several inequalities on the Heisenberg group}

\author{Rupert L. Frank}
\address{Rupert L. Frank, Department of Mathematics, Princeton University, Washington Road, Princeton, NJ 08544, USA}
\email{rlfrank@math.princeton.edu}

\author{Elliott H. Lieb}
\address{Elliott H. Lieb, Departments of Mathematics and Physics, Princeton University, P.~O.~Box 708, Princeton, NJ 08544, USA}
\email{lieb@math.princeton.edu}

\thanks{\copyright\, 2011 by the authors. This paper may be reproduced, in its entirety, for non-commercial purposes.\\
Support by U.S. NSF grant PHY 1068285 (R.L.F.) and PHY 0965859 (E.H.L.) is gratefully acknowledged.}

\begin{abstract}
 We derive the sharp constants for the inequalities on the Heisenberg
group $\Hei^n$ whose analogues on Euclidean space $\R^n$ are the well known
Hardy-Little\-wood-Sobolev inequalities. Only one special case had been
known previously, due to  Jerison-Lee more than twenty years ago. From
these inequalities we obtain the sharp constants for their duals, which
are the Sobolev inequalities for the Laplacian and conformally invariant
fractional Laplacians. By considering
limiting cases of these inequalities sharp constants for
the analogues of the Onofri and log-Sobolev inequalities on $\Hei^n$ are obtained. The
methodology is completely different from that used to obtain the $\R^n$
inequalities and can be (and has been) used to give a new, rearrangement free, proof
of the HLS inequalities.
\end{abstract}

\maketitle

\section{Introduction}

We shall be concerned with sharp constants in some classical integral inequalities on the Heisenberg group. These have analogues on $\R^n$, known as the Hardy-Littlewood-Sobolev inequalities, and their limiting version, the logarithmic Hardy-Littlewood-Sobolev inequality. They appear in many areas of analysis, often in their dual forms as Sobolev inequalities and Onofri inequalities.

The Hardy-Littlewood-Sobolev inequality \cite{HaLi1,HaLi2,So} on $\R^N$ is
\begin{equation} \label{hls}
\left| \iint_{\R^N\times\R^N} \frac{\overline{f(x)}\ g(y)}{|x-y|^\lambda} \,dx\,dy \right|
\leq D_{N,\lambda}^{\text{HLS}} \ \|f\|_p \ \|g\|_p
\end{equation}
for $0< \lambda <N$ and $p= 2N/(2N-\lambda)$. The sharp constant
$D_{N,\lambda}^{\text{HLS}}$ was obtained in \cite{Li} by utilizing the conformal
symmetries of (\ref{hls}) and symmetric decreasing rearrangements.
(See \cite{LiLo} for a discussion of these conformal symmetries and
\cite{CaLo} for a different, ingenious proof, and see \cite{FrLi}
for the connection between \eqref{hls} and reflection-positivity.)

A point $p\in \R^N$ can also be viewed as a member of the translation
group (with $p: \ x\mapsto x+p$), from which point of view $|x-y|$ becomes
$|x^{-1} y|$ and $dx$ is Haar measure on the group.  Then, (\ref{hls})
becomes an inequality for functions on this group.

An inequality similar to \eqref{hls} holds for the Heisenberg group $\Hei^n$ and, to
our knowledge, originates in the work of Folland and Stein \cite{FoSt}. For $0<\lambda<Q=2n+2$ and $p=2Q/(2Q-\lambda)$,
\begin{equation} \label{eq:hlshei}
\left| \iint_{\Hei^n\times\Hei^n} \frac{\overline{f(u)}\ g(v)}{|u^{-1}v|^\lambda} \,du\,dv \right|
\leq D_{n,\lambda} \ \|f\|_p \ \|g\|_p \,.
\end{equation}
Here $u^{-1}v$ is the group product, $|\cdot|$ is the homogeneous norm and $du$ is Haar measure. Details of this notation will be explained in the next section. The kernel in \eqref{eq:hlshei}, like that in \eqref{hls}, is positive definite, so it suffices to verify these inequalities for $f=g$.

The work \cite{FoSt} proves the existence of a finite constant $D_{n,\lambda}$ in \eqref{eq:hlshei}, but leaves open the question of its optimal value. There is a natural guess \cite{BrFoMo} for an optimizing function,
\begin{equation}
 \label{eq:opt}
H(u)= \left((1+|z|^2)^2+t^2\right)^{-(2Q-\lambda)/4}\,, 
\end{equation}
where $u=(z,t)$ in the identification of $\Hei^n$ with $\C^n\times\R$. This is in analogy with the optimizers in the Euclidean inequality \eqref{hls}, but we note the subtlety that the level surfaces of $H$ are neither isoperimetric surfaces \cite{CaDaPaTy} nor level surfaces of the homogenous norm $|\cdot|$ appearing in \eqref{eq:hlshei}. This disparity is connected with the fact that the $2n+1$ real coordinates parametrizing $\Hei^n$ do not all appear to the
same degree, as do the $\R^N$ coordinates in the norm $|x|^2 =\sum x_i^2$ appearing in \eqref{hls}. Consequently, arguments involving symmetric decreasing rearrangements can \emph{not} be expected to work for $\Hei^n$, and thus the sharp constant evaluation in \eqref{eq:hlshei} is considerably more difficult than in \eqref{hls}.

Nevertheless, in a celebrated paper \cite{JeLe}, Jerison and Lee were able to prove that the function $H$ in \eqref{eq:opt} is an optimizer in the special case $\lambda=Q-2$. (Actually, they solved the problem in the dual formulation of a Sobolev inequality involving the sub-Laplacian on $\Hei^n$.) Another reason to believe the correctness of $H$ is that the endpoint case, with $|u^{-1}v|^{-\lambda}$ replaced by $\log|u^{-1}v|$, has recently been settled \cite{BrFoMo} and the function $H$ with $\lambda=0$ turns out to be the optimizer there, too. Some other recent, related works on sharp constants are \cite{CoLu1,CoLu2}.

In this paper we evaluate the sharp constant $D_{n,\lambda}$ in the $\Hei^n$ case for \emph{all} allowed values of $\lambda$ and we show that, as in the case of the HLS inequality, $H$ is the unique optimizer, up to translations and dilations.
The $\lambda =Q-2$ case is special and we prove it separately in Section \ref{sec:jl} in a manner much simpler than either \cite{JeLe} or the general $\lambda$ proof in the rest of our paper.

We must first show that there is an optimizer for the inequality, i.e., that there is a pair $f$ and $g$ that actually gives equality in \eqref{eq:hlshei} with the sharp constant. We first show that the kernel $|u^{-1}v|^{-\lambda}$ is positive definite, which implies that we can restrict our search to $f=g$. The positive definiteness is not as obvious here as it is in the $\R^N$ inequality \eqref{hls}. Indeed, the operator square root of $|u^{-1}v|^{-\lambda}$ is not $|u^{-1}v|^{-(Q+\lambda)/2}$, as one might guess on the basis of the Euclidean case. The two are closely related, however, and, with the aid of a multiplier theorem of \cite{MuRiSt}, we can estimate the `true' square root in terms of the `false' square root.

The existence proof is more involved than the analogous proof for (\ref{hls}), because it is unclear how the left side of \eqref{eq:hlshei} behaves under any kind of rearrangement. We use a relatively recent, sophisticated version of the Sobolev inequality which originates in the work of several authors \cite{GeMeOr,BaGa,BaGeXu}. This inequality was used by G\'erard \cite{Ge} to prove the existence of an optimizer in the $\R^N$ Sobolev inequality, see also \cite{KiVi}. Our existence proof is accomplished with a dual form of the corresponding $\Hei^n$ inequality together with the extended Fatou lemma in \cite{BrLi}, thereby shortening the proof relative to \cite{Ge,KiVi}.

The final, but most complicated task is to evaluate the optimizer. We do this by examining the second variation inequality. Using an idea of Chang and Yang \cite{ChYa}, which expands an argument of Hersch \cite{He}, we show that the purported inequality is, in fact, an inequality in the opposite direction; the only function for which both inequalities are true is the stated function $H$. This step is most conveniently carried out in framework of the complex sphere $\Sph^{2n+1}$ where $H$ becomes the constant function. It is on $\Sph^{2n+1}$ that one easily sees a natural way to break the huge symmetry group of the inequality by requiring that the center of mass of the function be zero. The use of the complex sphere $\Sph^{2n+1}$ is not unlike the use of the real sphere in \cite{Li}.

It is well known that one can achieve new, useful inequalities by differentiating \eqref{hls} at the endpoints $\lambda=0$ and $\lambda=N$. The former case yields `logarithmic HLS inequalities', with sharp constants, going back to \cite{CaLo2,Be2}. The dual of these inequalities is Onofri's inequality and its generalizations, also with sharp constants; see \cite{On,ChYa,OsPhSa} and references therein. Differentiation at $\lambda=N$ yields a sharp logarithmic Sobolev inequality \cite{Be1}. We are able to do the parallel calculations for \eqref{eq:hlshei}. We rederive the result of \cite{BrFoMo} mentioned above by differentiating our sharp bound at $\lambda=0$ and thereby giving another proof of \cite{BrFoMo}. At the other endpoint, $\lambda=Q$, differentiation of \eqref{eq:hlshei} yields what appears to be a new logarithmic Sobolev inequality on $\Hei^n$.

As we said, the Heisenberg group proof is considerably more complicated than the proof of HLS in Euclidean space because it does not use symmetrization. The proof we give here will thus work as well, \emph{mutatis mutandis}, for \eqref{hls} and provides the first symmetrization-free proof of HLS for the entire range of $\lambda$ \cite{FrLi2}. See also \cite{FrLi} for a different symmetrization-free proof for $\lambda\geq N-2$.

Another area to which our methods seem applicable are the groups of Heisenberg type, in which the variable $t$ becomes multi-dimensional; see, e.g., \cite{GaVa} for a sharp inequality for partially symmetric functions related to \cite{JeLe1}.

Finally, we mention that
many computations with Jacobi polynomials are needed; we leave it as an
open problem to find an essential simplification of our computations.

%%%%%%%%%%%%%%%%%%%%%%%%%%%%%%%%%%%%%%%%%%%%%%%%%%%%%%%%%%%%%%%%%%%%%%%%%%

\section{Main result}

The Heisenberg group $\Hei^n$ is $\C^n\times\R$ with elements $u=(z,t)$ and group law
$$
u u' = (z,t) (z',t') = (z+z', t+t'+2\im z\cdot\overline{z'}) \,.
$$
Here we have set $z\cdot\overline{z'} = \sum_{j=1}^n z_j \overline{z_j'}$. Haar measure on $\Hei^n$ is the usual Lebesgue measure $du=dz\,dt$. (To be more precise, $dz=dx\,dy$ if $z=x+iy$ with $x,y\in\R^n$.) We write $\delta u = (\delta z,\delta^2 t)$ for dilations of a point $u=(z,t)$ and denote the homogeneous norm on $\Hei^n$ by
$$
| u | = |(z,t)| = (|z|^4+t^2)^{1/4} \,.
$$
As usual, we denote the homogeneous dimension by $Q:=2n+2$.

We shall prove

\begin{theorem}\label{main}
 Let $0<\lambda<Q=2n+2$ and $p:=2Q/(2Q-\lambda)$. Then for any $f,g\in L^p(\Hei^n)$
\begin{equation}
 \label{eq:main}
\left| \iint_{\Hei^n\times\Hei^n} \frac{\overline{f(u)}\ g(v)}{|u^{-1} v|^\lambda} \,du\,dv \right| 
\leq \left(\frac{\pi^{n+1}}{2^{n-1} n!} \right)^{\lambda/ Q} \frac{n!\, \Gamma((Q-\lambda)/2)}{\Gamma^2((2Q-\lambda)/4)} \ \|f\|_p \|g\|_p
\end{equation}
with equality if and only if
$$
f(u) = c \ H(\delta(a^{-1} u)) \,,
\qquad
g(u) = c' \ H(\delta(a^{-1} u))
$$
for some $c,c'\in\C$, $\delta>0$ and $a\in\Hei^n$ (unless $f\equiv 0$ or $g\equiv 0$). Here $H$ is the function in \eqref{eq:opt}.
\end{theorem}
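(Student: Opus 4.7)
The plan splits into three stages: reduction to $f=g$ via positive definiteness of the kernel, existence of an optimizer, and identification of the optimizer with $H$.

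For the reduction, since $|u^{-1}|=|u|$ the bilinear form $B(f,g) := \iint \overline{f(u)}|u^{-1}v|^{-\lambda}g(v)\,du\,dv$ is Hermitian. On $\R^N$ one factors $|x|^{-\lambda}$ as a constant multiple of $|x|^{-(\lambda+N)/2} * |x|^{-(\lambda+N)/2}$, but the analogous convolution identity on $\Hei^n$ fails. Following the introduction, I would express $|u|^{-\lambda}$ through the sub-Laplacian spectral calculus and use a Mikhlin-type multiplier theorem \cite{MuRiSt} to bound the ``true'' operator square root of $K_\lambda(u,v) := |u^{-1}v|^{-\lambda}$ in terms of the ``false'' convolution square root $|u|^{-(\lambda+Q)/2}$. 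This yields positive definiteness of $K_\lambda$, and by polarization the problem reduces to maximizing $B(f,f)$ over $\|f\|_p = 1$.

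For existence, I would employ a concentration-compactness strategy adapted to the symmetries of the problem, namely left translations and isotropic dilations. Given a maximizing sequence $\{f_k\}$, a refined Sobolev-type inequality for $\Hei^n$ of Bahouri--G\'erard--Xu type \cite{GeMeOr,BaGa,BaGeXu} controls $\|f\|_p$ by a geometric mean of $\|f\|_p$ and a Besov-type negative-index norm, and thus locates parameters $(a_k,\delta_k)$ such that the renormalized sequence $\widetilde f_k(u) := \delta_k^{Q/p} f_k(a_k \cdot \delta_k u)$ admits a nonzero weak $L^p$ limit $f$. The Brezis--Lieb lemma \cite{BrLi} applied simultaneously to $\|\cdot\|_p^p$ and to $B$ splits both functionals along $\widetilde f_k = f + r_k$; the strict subadditivity of the optimization constant under such splittings then forces $r_k \to 0$ in $L^p$, so that $f$ is an optimizer.

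The crux, and the main obstacle, is to identify $f$ with $H$. Via the Cayley transform the problem lifts to the CR sphere $\Sph^{2n+1}$: the function $H$ becomes a constant and the inequality gains the full $SU(n+1,1)$ symmetry group of CR automorphisms. Using this enlarged group \emph{\`a la} Hersch, one can normalize a maximizer $F$ to satisfy the center-of-mass condition $\int_{\Sph^{2n+1}} F(\zeta)^{p-1}\zeta\,d\sigma(\zeta) = 0$, killing the first CR-spherical-harmonic component of $F^{p-1}$. Under the $U(n+1)$-invariant CR-harmonic decomposition of $L^2(\Sph^{2n+1})$, the kernel $|\zeta^{-1}\eta|^{-\lambda}$ diagonalizes, with eigenvalues expressible through Jacobi polynomials. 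The Euler--Lagrange equation for $F$ and the second variation of $B(f,f)/\|f\|_p^2$ yield, for perturbations $\phi$ with $\int F^{p-1}\phi = 0$, an inequality of the form
\[
\iint |\zeta^{-1}\eta|^{-\lambda}\phi(\zeta)\phi(\eta)\,d\sigma(\zeta)\,d\sigma(\eta)
\;\le\;\mu(p-1)\int F^{p-2}\phi^2\,d\sigma .
\]
Following the refinement by Chang--Yang \cite{ChYa} of Hersch's trick, the decisive step is to establish the \emph{reverse} inequality on precisely the first-CR-harmonic subspace, so that the center-of-mass normalization, which removes that subspace from consideration, combined with the second-variation bound on the complement, forces equality throughout and hence $F$ constant. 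The most technical ingredient --- and the place where I expect the hardest work --- is the explicit Jacobi-polynomial calculation confirming that the critical eigenvalue equality of the kernel's spectrum occurs exactly at the first harmonic mode, so that rigidity pins down the optimizer uniquely up to the stated $(c,\delta,a)$ family.
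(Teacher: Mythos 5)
Your three-stage outline --- positive definiteness via the M\"uller--Ricci--Stein multiplier theorem, existence via a Bahouri--G\'erard--Xu refined inequality plus the Brezis--Lieb lemma, and identification via Cayley transform, center-of-mass normalization, second variation, and Jacobi-polynomial spectral computations --- matches the paper's strategy closely. The supporting references you cite are the ones actually used.

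Two details, however, misrepresent how the third stage works. First, the ``reverse inequality'' (the paper's Theorem~\ref{keyineq}) is \emph{not} a statement about the first-CR-harmonic subspace: it is a comparison of two quadratic forms valid for \emph{every} $f$ on $\Sph^{2n+1}$, and the Jacobi-polynomial computation (via the Funk--Hecke formula on the $U(n+1)$-decomposition $L^2(\Sph^{2n+1})=\bigoplus_{j,k}\mathcal H_{j,k}$) reduces it to a scalar inequality in $(j,k)$ whose equality case is $j=k=0$, i.e.\ the \emph{constant} mode, not the first harmonic. Second, the center-of-mass condition \eqref{eq:com} does not ``remove a subspace from consideration''; its role is to make the $2(n+1)$ perturbations $v=\zeta_j h$ and $v=\overline{\zeta_j} h$ admissible for the second-variation inequality (i.e.\ satisfying $\int h^{p-1}v\,d\zeta=0$). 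Summing the resulting test-function inequalities over $j$ yields one scalar inequality about the optimizer $h$, while Theorem~\ref{keyineq} applied to $f=h$ gives the opposite inequality with equality only for constants; together they force $h$ constant. A smaller slip: the refined inequality cannot bound $\|f\|_p$ by a geometric mean involving $\|f\|_p$ itself. The paper instead passes to the $L^2$ framework via the operator square root $k$ (Proposition~\ref{sr}) and bounds $\|f*k\|_q$ by a geometric mean of $\|f\|_2$ and a heat-semigroup Besov norm of $f*k$; this $L^2$ duality, which also underlies the positive definiteness you correctly invoke in stage one, should be carried over explicitly into the existence step.
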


In other words, we prove that the function $H$ in \eqref{eq:opt} is the unique optimizer in inequality \eqref{eq:hlshei} up to translations, dilations and multiplication by a constant. An equivalent characterization of all optimizers is the form
$$
f(z,t) = \frac{c}{\left|i|z|^2 +t+2iz\cdot\overline w +\mu\right|^{(2Q-\lambda)/2}}
$$
with $c,\lambda\in\C$ and $w\in\C^n$ satisfying $\im \mu>|w|^2$, and $g$ proportional to $f$.

By a duality argument, based on the fact (see \cite{FoSt} and \cite[(XIII.26)]{St})  that the Green's function of the sub-Laplacian $\mathcal L$ in \eqref{eq:sublap} is $2^{n-2} \Gamma^2(n/2)\pi^{-n-1}|u|^{-Q+2}$, we see that the case $\lambda=Q-2$ of Theorem~\ref{main} is equivalent to the sharp Sobolev inequality \cite{JeLe} of Jerison and Lee,
\begin{equation}
 \label{eq:sobjl}
\frac14 \sum_{j=1}^n \int_{\Hei^n} \left( \left| \left( \frac\partial{\partial x_j} +2 y_j \frac\partial{\partial t} \right) u \right|^2 + \left| \left( \frac\partial{\partial y_j} -2 x_j \frac\partial{\partial t} \right) u\right|^2 \right) \,du
\geq \frac{\pi n^2 }{(2^{2n} n!)^{1/(n+1)} } \|u\|_q^2
\end{equation}
with $q=2Q/(Q-2)$. We shall give a short, direct proof of \eqref{eq:sobjl} in Section~\ref{sec:jl} below, that is easier than going the route of \eqref{eq:main}.

The Cayley transform $\mathcal C$, the explicit definition of which will be recalled in Appendix \ref{sec:equiv}, defines a bijection between the Heisenberg group $\Hei^n$ and the punctured sphere $\Sph^{2n+1}\setminus\{(0,\ldots,0,-1)\}$. We consider the sphere $\Sph^{2n+1}$ as a subset of $\C^{n+1}$ with coordinates $(\zeta_1,\ldots,\zeta_{n+1})$ satisfying $\sum_{j=1}^{n+1} |\zeta_j|^2 =1$, and (non-normalized) measure denoted by $d\zeta$. Again we shall use the notation $\zeta\cdot\overline\eta = \sum_{j=1}^{n+1} \zeta_j\overline{\eta_j}$ for the scalar product induced by $\C^{n+1}$. Via this transform Theorem \ref{main} is equivalent to

\begin{theorem}\label{mainsph}
 Let $0<\lambda<Q=2n+2$ and $p:=2Q/(2Q-\lambda)$. Then for any $f,g\in L^p(\Sph^{2n+1})$
\begin{equation}
 \label{eq:mainsph}
\left| \iint_{\Sph^{2n+1}\times\Sph^{2n+1}} \frac{\overline{f(\zeta)}\ g(\eta)}{|1-\zeta\cdot\overline\eta|^{\lambda/2}} \,d\zeta\,d\eta \right|
\leq \left(\frac{2\pi^{n+1}}{n!} \right)^{\lambda/ Q} \frac{n!\, \Gamma((Q-\lambda)/2)}{\Gamma^2((2Q-\lambda)/4)}
\ \|f\|_p\ \|g\|_p
\end{equation}
with equality if and only if
\begin{equation}
 \label{eq:optsph}
f(\zeta) = \frac{c}{|1-\overline\xi\cdot\zeta|^{(2Q-\lambda)/2}} \,,
\qquad
g(\zeta) = \frac{c'}{|1-\overline\xi\cdot\zeta|^{(2Q-\lambda)/2}} \,,
\end{equation}
for some $c,c'\in\C$ and some $\xi\in\C^{n+1}$ with $|\xi|<1$ (unless $f\equiv 0$ or $g\equiv 0$).
\end{theorem}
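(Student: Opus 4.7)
My plan is to derive Theorem~\ref{mainsph} as an equivalent reformulation of Theorem~\ref{main} via the Cayley transform $\mathcal C:\Hei^n\to\Sph^{2n+1}\setminus\{(0,\ldots,0,-1)\}$ whose explicit form is recalled in Appendix~\ref{sec:equiv}. The missing north pole is a null set on both sides and plays no role.

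I would extract two standard identities from the explicit form of $\mathcal C$. The first is a conformal Jacobian formula $d\zeta = \Omega(u)^Q\,du$ (up to a universal normalization to be pinned down below), with $\Omega(u)$ a positive function such as $2|1+|z|^2-it|^{-1}$ at $u=(z,t)$. The second is the cross-ratio identity
\[
|1-\zeta\cdot\overline\eta| \;=\; \Omega(u)\,\Omega(v)\,|u^{-1}v|^{2},
\qquad \zeta=\mathcal C(u),\ \eta=\mathcal C(v),
\]
which expresses the spherical chordal distance in terms of the Heisenberg homogeneous norm. Both formulas are routine from the explicit form of $\mathcal C$ and reflect that $\mathcal C$ intertwines the CR structures on $\Hei^n$ and on $\Sph^{2n+1}$.

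Given these, the substitution $\tilde f(\zeta):=\Omega(u)^{-Q/p}f(u)$ (and similarly for $\tilde g$) is an $L^p$ isometry by the Jacobian formula. Inserting it into \eqref{eq:main} replaces the kernel $|u^{-1}v|^{-\lambda}$ by $(\Omega(u)\Omega(v))^{\lambda/2}|1-\zeta\cdot\overline\eta|^{-\lambda/2}$, while $du\,dv$ picks up $(\Omega(u)\Omega(v))^{-Q}$. The total exponent of $\Omega(u)$ collected on the right side is $Q/p+\lambda/2-Q$, and this vanishes exactly because $p=2Q/(2Q-\lambda)$ satisfies $1/p+\lambda/(2Q)=1$. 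Hence all $\Omega$-factors cancel and \eqref{eq:main} turns into \eqref{eq:mainsph} up to the universal normalization constant of the Jacobian. Fixing this constant by the known total volume $|\Sph^{2n+1}|=2\pi^{n+1}/n!$, a direct check shows it yields precisely the factor $2^{n\lambda/Q}$ by which the two stated sharp constants differ.

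The same substitution is bijective between the two optimizer families: Heisenberg translations and dilations act on $\Sph^{2n+1}\setminus\{N\}$ through $\mathcal C$ as the stabilizer of the north pole inside the CR automorphism group of $\Sph^{2n+1}$, which is (a quotient of) $SU(n+1,1)$; under this identification the function $H$ of \eqref{eq:opt} pulls back to the constant function on the sphere, whose full orbit is exactly $\{c\,|1-\overline\xi\cdot\zeta|^{-(2Q-\lambda)/2}:|\xi|<1\}$, parametrized by the open unit ball in $\C^{n+1}$ — this is the family appearing in \eqref{eq:optsph}. The one real obstacle is the careful bookkeeping of the Jacobian constant and the constants in the cross-ratio identity; I would handle it by applying the substitution directly to the optimizer $H$, which pulls back to a constant, thereby fixing every numerical factor in a single computation and producing the sharp constant in \eqref{eq:mainsph} from the one in \eqref{eq:main}.
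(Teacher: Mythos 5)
Your calculation correctly establishes the \emph{equivalence} of Theorems~\ref{main} and~\ref{mainsph} under the Cayley transform --- indeed, the conformal Jacobian formula, the cross-ratio identity, and the cancellation of the conformal factors via $1/p+\lambda/(2Q)=1$ are precisely the content of Appendix~\ref{sec:equiv}, and the factor $2^{n\lambda/Q}$ by which the two sharp constants differ does come out exactly as you anticipate. However, the proposal is circular in the context of this paper: you derive Theorem~\ref{mainsph} \emph{from} Theorem~\ref{main}, but the paper's logical structure goes the other way. Theorem~\ref{main} is not proved independently on $\Hei^n$; the only substantive argument is carried out on the sphere. Section~\ref{sec:proofs} proves Theorem~\ref{mainsph} by (i) transferring the existence of an optimizer from the Heisenberg side (Proposition~\ref{exmax}, via the refined HLS inequality and Br\'ezis--Lieb), (ii) reducing to a nonnegative optimizer with vanishing center of mass (Appendix~\ref{sec:com}), (iii) writing down the second-variation inequality \eqref{eq:secvar}, and (iv) contradicting it via the key linear inequality (Theorem~\ref{keyineq}), which is proved by the Funk--Hecke diagonalization and the explicit Jacobi-polynomial eigenvalue computation in Corollary~\ref{ev}. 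Theorem~\ref{main} is then a corollary of Theorem~\ref{mainsph} via the Cayley transform.

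So the main gap is not in the Cayley transform bookkeeping (which you are right to believe can be pinned down by plugging in the explicit optimizer and the known volume $|\Sph^{2n+1}|$), but in the fact that your argument contains none of the actual proof --- no determination of the sharp constant, no argument that the constant function is the extremal after normalization, no Funk--Hecke computation. If Theorem~\ref{main} were independently proved on $\Hei^n$, your reduction would be a complete proof of Theorem~\ref{mainsph}; as things stand, you have shown that each implies the other but proved neither. Two smaller points: your cross-ratio identity $|1-\zeta\cdot\overline\eta|=\Omega(u)\Omega(v)|u^{-1}v|^2$ and the Jacobian $d\zeta=\Omega(u)^Q\,du$ cannot both hold with the same $\Omega$ and no constant --- the paper's versions differ by a power of $2$, which is precisely where the $2^{n\lambda/Q}$ comes from, so your ``universal normalization'' must be tracked in both identities separately. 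And the orbit of the constant under the stabilizer of the pole only gives the subfamily $\xi=(0,\dots,0,r)$; to get the full ball $|\xi|<1$ in \eqref{eq:optsph} you must also compose with $U(n+1)$ rotations, as the paper does in Appendix~\ref{sec:com}.
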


In particular, $f=g\equiv1$ are optimizers and this enables us to compute the constant.

\subsection*{Sobolev inequalities on the sphere}

Just as on $\Hei^n$ there is a duality between the fractional integral inequality \eqref{eq:mainsph} with $\lambda=Q-2$ and a Sobolev inequality on the sphere $\Sph^{2n+1}$. In order to state this inequality, we first need to introduce some notation. For $j=1,\ldots,n+1$ we define the operators
$$
T_j := \frac{\partial}{\partial \zeta_j}-\overline{\zeta_j} \sum_{k=1}^{n+1} \zeta_k \frac{\partial}{\partial \zeta_k}  \,,
\qquad
\overline{T_j} := \frac{\partial}{\partial \overline{\zeta_j}}-\zeta_j \sum_{k=1}^{n+1} \overline{\zeta_k} \frac{\partial}{\partial \overline{\zeta_k}} \,,
$$
and the conformal Laplacian
$$
\mathcal L := -\frac12 \sum_{j=1}^{n+1} \left(\overline{T_j}T_j + T_j \overline{T_j}\right) + \frac{n^2}4 \,.
$$
The associated quadratic form is
\begin{equation}
 \label{eq:esph}
\mathcal E[u] := \frac12 \int_{\Sph^{2n+1}} \left( \sum_{j=1}^{n+1} \left( |T_j u|^2 + |\overline{T_j} u|^2 \right) + \frac{n^2}2 |u|^2 \right) \,d\zeta \,.
\end{equation}
The Sobolev (or Folland-Stein) space $S^1(\Sph^{2n+1})$ consists of all functions $u$ on $\Sph^{2n+1}$ satisfying $\mathcal E[u]<\infty$. With this notation Theorem \ref{mainsph} with $\lambda=Q-2$ is equivalent to the Jerison-Lee inequality
\begin{equation}
 \label{eq:jlsph}
\mathcal E[u] \geq \frac{n^2}{4} \left(\frac{2\pi^{n+1}}{n!}\right)^{2/Q}  \left( \int_{\Sph^{2n+1}} |u|^{2Q/(Q-2)} \,d\zeta \right)^{(Q-2)/Q}
\end{equation}
for all $u\in S^1(\Sph^{2n+1})$. We will discuss this (sharp) inequality and the cases of equality again in the following Section \ref{sec:jl}. There are more inequalities that one can deduce from \eqref{eq:mainsph}. The following is new. Let $\mathcal E_0[u]$ be given by \eqref{eq:esph} without the term $\frac{n^2}2 |u|^2$.

\begin{corollary}\label{sobq}
 Let $2< q< 2Q/(Q-2)$. Then for any $u\in S^1(\Sph^{2n+1})$
\begin{equation}
 \label{eq:sobq}
\frac{4(q-2)}{Q-2} \mathcal E_0[u] + \int_{\Sph^{2n+1}} |u|^2 \,d\zeta 
\geq |\Sph^{2n+1}|^{1-2/q} \left( \int_{\Sph^{2n+1}} |u|^q \,d\zeta \right)^{2/q} \,.
\end{equation}
Equality holds if and only if $u$ is constant.
\end{corollary}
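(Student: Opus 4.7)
The plan is to derive \eqref{eq:sobq} by a duality argument from the sharp HLS inequality of Theorem~\ref{mainsph}, combined with a spectral comparison on $\Sph^{2n+1}$. Given $q\in(2,2Q/(Q-2))$, set $\lambda:=2Q/q\in(Q-2,Q)$; this is the value of $\lambda$ for which the HLS exponent $p=2Q/(2Q-\lambda)$ coincides with the H\"older conjugate $q'=q/(q-1)$ of $q$. Applying Theorem~\ref{mainsph} with $f=g$ at this $\lambda$ and using that $f\equiv 1$ is an optimizer (the $\xi=0$ case of \eqref{eq:optsph}) lets us read off the sharp constant and rewrite HLS as
\begin{equation*}
\langle f, I_\lambda f\rangle_{L^2}\ \leq\ K_0\,|\Sph^{2n+1}|^{2/q-1}\,\|f\|_{q'}^2,
\qquad K_0 := \int_{\Sph^{2n+1}}|1-\zeta\cdot\overline\eta|^{-\lambda/2}\,d\eta,
\end{equation*}
where $I_\lambda$ is the integral operator with kernel $|1-\zeta\cdot\overline\eta|^{-\lambda/2}$ and $K_0$ (independent of $\zeta$) is its eigenvalue on constants. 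Since $I_\lambda$ is positive and self-adjoint on $L^2(\Sph^{2n+1})$, the bound is equivalent to $\|I_\lambda^{1/2}f\|_2\leq(K_0|\Sph^{2n+1}|^{2/q-1})^{1/2}\|f\|_{q'}$; dualizing and substituting $u=I_\lambda^{1/2}g$ yields
\begin{equation*}
\|u\|_q^2\ \leq\ K_0\,|\Sph^{2n+1}|^{2/q-1}\,\langle u, I_\lambda^{-1}u\rangle_{L^2}.
\end{equation*}

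Rewriting \eqref{eq:sobq} as $\|u\|_q^2\leq|\Sph^{2n+1}|^{2/q-1}(A\,\mathcal E_0[u]+\|u\|_2^2)$ with $A:=4(q-2)/(Q-2)$, the corollary reduces to establishing the quadratic-form inequality
\begin{equation*}
K_0\,\langle u, I_\lambda^{-1}u\rangle_{L^2}\ \leq\ A\,\mathcal E_0[u]+\|u\|_2^2.
\end{equation*}
Since both sides are $U(n+1)$-invariant, I would decompose $L^2(\Sph^{2n+1})=\bigoplus_{j,k\geq 0}\mathcal H_{j,k}$ into irreducible bihomogeneous harmonic subspaces. Writing $\mu_{j,k}(\lambda)$ for the eigenvalue of $I_\lambda$ and $\nu_{j,k}$ for that of $\mathcal L_0:=\mathcal L-n^2/4$ on $\mathcal H_{j,k}$, the form inequality becomes the pointwise inequality
\begin{equation*}
K_0\ \leq\ \mu_{j,k}(\lambda)\bigl(1+A\nu_{j,k}\bigr)\qquad\text{for all }j,k\geq 0,
\end{equation*}
with equality at $(j,k)=(0,0)$. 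As a sanity check, at $q=2Q/(Q-2)$ the choice $\lambda=Q-2$ makes $I_{Q-2}$ proportional to $\mathcal L^{-1}$ (via the Green's function cited after Theorem~\ref{main}), and the pointwise inequality collapses to equality at \emph{every} $(j,k)$, matching the fact that \eqref{eq:jlsph} admits the full conformal family \eqref{eq:optsph} of optimizers.

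The core technical step is establishing the spectral inequality strictly for $\lambda\in(Q-2,Q)$ and $(j,k)\neq(0,0)$. The $\mu_{j,k}(\lambda)$ can be computed in closed form as ratios of Gamma functions via a Funk--Hecke-type expansion of the zonal kernel $|1-\zeta\cdot\overline\eta|^{-\lambda/2}$ into the reproducing kernels of the $\mathcal H_{j,k}$ (equivalently, in terms of disk or Jacobi polynomials), while the $\nu_{j,k}$ are known quadratic polynomials in $j,k$. Substituting reduces the inequality to an explicit Gamma-ratio estimate; a natural strategy is to treat it as a one-parameter family in $\lambda$ anchored at the boundary equality at $\lambda=Q-2$, and verify the correct sign of the $\lambda$-derivative using log-convexity of the Gamma function. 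This is the step I expect to be the hardest, consistent with the authors' opening remark about needing many Jacobi polynomial computations. Finally, equality in \eqref{eq:sobq} forces both equality in the HLS step (so $u$ is an HLS optimizer of the form \eqref{eq:optsph}) and in the strict spectral bound at every $(j,k)\neq(0,0)$ (so only the $\mathcal H_{0,0}$ component of $u$ is nonzero); together these force $u$ constant.
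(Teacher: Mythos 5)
Your high-level reduction is the same as the paper's: take $\lambda=2Q/q$ (the paper parametrizes this as $\lambda=Q-d$ with $q=2Q/(Q-d)$, which is identical), dualize the $\lambda$-HLS inequality from Theorem~\ref{mainsph} to get an inequality of the form \eqref{eq:hlsdual} for a Gamma-ratio operator $A_d$ (this $A_d$ is your $I_\lambda^{-1}$ up to a constant), and reduce \eqref{eq:sobq} to the pointwise spectral comparison \eqref{eq:goal} between the eigenvalues of $\tfrac{4(q-2)}{Q-2}\mathcal L_0+1$ and those of $\tfrac{\Gamma((Q-d)/4)^2}{\Gamma((Q+d)/4)^2}A_d$ on each $\mathcal H_{j,k}$. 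Up to notation this is exactly the paper's setup. Two concrete issues arise downstream.

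First, the strategy you propose for the hard step, namely treating \eqref{eq:goal} as a one-parameter family in $\lambda$ anchored at the equality at $\lambda=Q-2$ and checking the sign of the $\lambda$-derivative, does not close as stated. For each fixed $(j,k)$ the difference of the two sides of \eqref{eq:goal} vanishes not only at $d=2$ (your $\lambda=Q-2$) but also at $d=0$ (your $\lambda=Q$), and moreover it vanishes at $(j,k)\in\{(0,0),(0,1),(1,0)\}$ for \emph{every} $d$. With zeros at both ends of the $d$-interval a one-sided derivative sign is far from sufficient; one would need a concavity-type statement, and log-convexity of $\Gamma$ alone does not give it in any direct way. The paper avoids this entirely: it fixes $d$ and argues in $j$. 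It first establishes \eqref{eq:goal} at $j=0$ by reducing, via the digamma inequality \eqref{eq:digamma}, to a bound already in Beckner \cite[(38)]{Be2}, and then shows that the logarithmic derivative in $j$ of the left side dominates that of the right side for $j\geq 1$, after a monotonicity-in-$k$ reduction. That is the ingredient you would still need to supply.

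Second, your equality analysis asserts that the spectral bound is strict at every $(j,k)\neq(0,0)$; this is false. As noted above, \eqref{eq:goal} is an equality also at $(0,1)$ and $(1,0)$. The spectral step alone therefore only confines an equality case of \eqref{eq:sobq} to $\mathcal H_{0,0}\oplus\mathcal H_{0,1}\oplus\mathcal H_{1,0}$. To conclude that $u$ is constant one must invoke, as the paper does, the strictness of \eqref{eq:hlsdual} (i.e., the uniqueness of optimizers in Theorem~\ref{mainsph}) for the modes $(0,1)$ and $(1,0)$. You do say the HLS equality case and the spectral bound are used \emph{together}, which is the correct intuition, but the spectral claim as written is wrong and, were it true, it would render the HLS step superfluous; the argument needs to be repaired by identifying the degenerate $(j,k)$ and handling them through Theorem~\ref{mainsph}.
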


This corollary is the analogue of a Sobolev inequality of \cite{BiVe,Be2} for functions on the Riemannian sphere. Equation \eqref{eq:sobq} agrees with \eqref{eq:jlsph} if $q=2Q/(Q-2)$, but we state \eqref{eq:jlsph} separately because the family of optimizers is different in the two cases. The proof of Corollary~\ref{sobq} will be given in Subsection \ref{sec:sobq}. It is related to arguments in \cite{Be2}.

\subsection*{The limiting cases}
We conclude this section by presenting two inequalities that follow via differentiation at the endpoints $\lambda=0$ and $\lambda=Q$. We only state them for functions on the sphere, but there are equivalent versions on the Heisenberg group obtained via the Cayley transform. Our first corollary is, in fact, the main result of \cite{BrFoMo}. It is the $\Hei^n$ version of \cite{CaLo2,Be2}.

\begin{corollary}\label{mainlog}
 For any non-negative $f,g\in L\log L(\Sph^{2n+1})$ with
$$
\int_{\Sph^{2n+1}} f\,d\zeta = \int_{\Sph^{2n+1}} g\,d\zeta = |\Sph^{2n+1}| = \frac{2\pi^{n+1}}{n!}
$$
one has
\begin{equation}
 \label{eq:mainlog}
\begin{split}
 & \iint_{\Sph^{2n+1}\times\Sph^{2n+1}} f(\zeta) \log\left(\frac{1}{|1-\zeta\cdot\overline\eta|}\right) g(\eta) \,d\zeta\,d\eta \\
 & \qquad \qquad \leq \frac{|\Sph^{2n+1}|}{Q} \int_{\Sph^{2n+1}} f\log f\,d\zeta 
+ \frac{|\Sph^{2n+1}|}{Q} \int_{\Sph^{2n+1}} g\log g\,d\zeta \,.
\end{split}
\end{equation}
The constant $|\Sph^{2n+1}|/Q$ is sharp and equality holds if $f$ and $g$ are $L^1$-normalized functions given in \eqref{eq:optsph} with $\lambda=0$.
\end{corollary}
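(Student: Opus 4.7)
The plan is to obtain \eqref{eq:mainlog} by differentiating the inequality of Theorem \ref{mainsph} in $\lambda$ at $\lambda=0$. Abbreviate $A := |\Sph^{2n+1}| = 2\pi^{n+1}/n!$, $p(\lambda) = 2Q/(2Q-\lambda)$, and let $C(\lambda)$ be the constant appearing on the right of \eqref{eq:mainsph}. Introduce the deficit
\begin{equation*}
\Psi(\lambda) := C(\lambda)\,\|f\|_{p(\lambda)}\,\|g\|_{p(\lambda)} - \iint_{\Sph^{2n+1}\times\Sph^{2n+1}} \frac{f(\zeta)\,g(\eta)}{|1-\zeta\cdot\overline\eta|^{\lambda/2}}\,d\zeta\,d\eta \,,
\end{equation*}
which Theorem \ref{mainsph} asserts to be non-negative on $(0,Q)$. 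Since $p(0)=1$, $C(0)=n!/\Gamma(Q/2)=1$, and the kernel reduces to $1$ at $\lambda=0$, the normalization $\int f=\int g=A$ yields $\Psi(0)=A^2-A^2=0$. Hence the one-sided derivative satisfies $\Psi'(0^+)\ge 0$, and the corollary will follow from evaluating this derivative.

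The three contributions may be computed separately. Differentiating the double integral under the integral sign gives $\tfrac12\iint f(\zeta)g(\eta)\log(1/|1-\zeta\cdot\overline\eta|)\,d\zeta\,d\eta = \tfrac12 L$, where $L$ denotes the left-hand side of \eqref{eq:mainlog}. From $\|f\|_p = (\int f^p)^{1/p}$, the chain rule, and $p'(0) = 1/(2Q)$, one obtains
\begin{equation*}
\frac{d}{d\lambda}\|f\|_{p(\lambda)}\Big|_{\lambda=0} = \frac{1}{2Q}\Bigl(\int_{\Sph^{2n+1}} f\log f\,d\zeta - A\log A\Bigr),
\end{equation*}
and likewise for $g$. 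A logarithmic differentiation of $C(\lambda)$ yields
\begin{equation*}
\frac{C'(0)}{C(0)} = \frac{\log A}{Q} - \tfrac12\psi\!\left(\tfrac{Q}{2}\right) + \tfrac12\psi\!\left(\tfrac{Q}{2}\right) = \frac{\log A}{Q},
\end{equation*}
because the digamma contributions from $\Gamma((Q-\lambda)/2)$ and $\Gamma^2((2Q-\lambda)/4)$ coincide at $\lambda=0$. This exact cancellation, together with the cancellation between $C'(0)A^2$ and the $A\log A$ terms produced by the two norms, is what makes the constant work out to $A/Q$.

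Assembling, $\Psi'(0)\ge 0$ reads
\begin{equation*}
\frac{A^2\log A}{Q} + \frac{A}{2Q}\Bigl(\int f\log f + \int g\log g - 2A\log A\Bigr) \ge \frac{L}{2},
\end{equation*}
in which the $A^2\log A$ terms cancel exactly; multiplication by $2$ then gives \eqref{eq:mainlog} with the sharp constant $|\Sph^{2n+1}|/Q$. For the equality assertion, the constants $f\equiv g\equiv 1$ saturate \eqref{eq:mainsph} for every $\lambda$, forcing $\Psi\equiv 0$ and hence equality in \eqref{eq:mainlog}; the remaining $L^1$-normalized optimizers from \eqref{eq:optsph} at $\lambda=0$ form the orbit of the constants under the CR automorphism group of $\Sph^{2n+1}$ (the lifts of Heisenberg translations and dilations through the Cayley transform), which preserves both sides of \eqref{eq:mainlog}.

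The principal technical obstacle is the justification of the above differentiation when $f,g$ lie only in $L\log L$, since this class is not contained in $L^{1+\varepsilon}$ and $\|f\|_{p(\lambda)}$ need not even be finite for $\lambda>0$. The natural remedy is to prove \eqref{eq:mainlog} first for bounded non-negative $f,g$, where dominated convergence and the integrability of $\log|1-\zeta\cdot\overline\eta|$ on the product sphere make every manipulation routine, and then to extend to $L\log L$ by truncating $f_M = \min(f,M)$, $g_M = \min(g,M)$, renormalizing, and passing to the limit $M\to\infty$ via monotone convergence in each of the three terms of \eqref{eq:mainlog}.
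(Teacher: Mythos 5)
Your differentiation argument is the same one the paper uses: subtract $|\Sph^{2n+1}|^2$ from both sides of \eqref{eq:mainsph}, divide by $\lambda$, and send $\lambda\to 0$. The computation of $C'(0)$, of the $\|f\|_{p(\lambda)}$-derivatives, and of the $\frac12 L$-term all check out, and the cancellations you identify (the digamma terms at $\alpha=Q/2$, and the $A^2\log A$ bookkeeping) are exactly what makes the constant $|\Sph^{2n+1}|/Q$ appear. The truncation remedy for $f,g\in L\log L$ is also the standard fix; the paper handles this by simply citing \cite[Thm.~8.14]{LiLo} rather than carrying it out.

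Where you genuinely diverge from the paper is the sharpness/equality discussion, and here you should be a bit more careful. Your observation that $f=g=1$ gives $\Psi\equiv 0$, hence equality in \eqref{eq:mainlog}, is correct but does \emph{not} by itself prove the constant is sharp: for constant $f$ and $g$ one has $\int f\log f=\int g\log g=0$ and, by the eigenvalue formula in Corollary~\ref{ev}(1), also $\iint\log\frac{1}{|1-\zeta\cdot\overline\eta|}\,d\zeta\,d\eta=0$, so the equality is the trivial one $0=0$ and holds for \emph{any} constant in place of $|\Sph^{2n+1}|/Q$. To get sharpness you really do need either the trial-function expansion used in the paper ($f=g=1+\epsilon\,\re\zeta_1$, matching to order $\epsilon^2$), or — as you go on to invoke — the non-constant optimizers obtained from the constants by the CR automorphism group. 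The second route works because the difference of the two sides of \eqref{eq:mainlog} is conformally invariant (the logarithmic Jacobian cocycle from $\log|1-\gamma^{-1}(\zeta)\cdot\overline{\gamma^{-1}(\eta)}|$ is exactly cancelled by the shift in the entropy terms under $f\mapsto J_\gamma\,(f\circ\gamma)$), and for a non-constant optimizer $\int f\log f>0$, so the equality is non-trivial. You assert this invariance but do not verify it; the paper sidesteps it entirely with the trial-function computation, which is the more elementary of the two. Either way, make the logical structure explicit: equality for constants alone does not establish sharpness, and you need one of these two further arguments to close the claim.
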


It is shown in \cite{BrFoMo} that the stated functions are the \emph{only} optimizers.

\iffalse
\begin{corollary}\label{mainlog}
 For any non-negative $f,g\in L^1(\Hei^n)$ with
$$
\int_{\Hei^n} f\,du = \int_{\Hei^n} g\,du = 1
$$
and $\int (f +g)\log(1+|u|^2)\,du<\infty$ one has
\begin{equation}
 \label{eq:mainlog}
\begin{split}
 & \iint_{\Hei^n\times\Hei^n} f(u) \log\frac{1}{|u^{-1} v|} g(v) \,du\,dv
- \frac{1}{2Q} \int_{\Hei^n} f\log f\,du - \frac{1}{2Q} \int_{\Hei^n} g\log g\,du \\
& \qquad \leq \frac1Q \log\frac{\pi^{n+1}}{2^{n-1}n!} \,.
\end{split}
\end{equation}
Equality holds for
$$
f(u) = c \ H(\delta(a^{-1} u))
\qquad
g(u) = c' \ H(\delta(a^{-1} u))
$$
with some $c,c'>0$, $\delta>0$ and $a\in\Hei^n$. Here $H$ is the function in \eqref{eq:opt} with $\lambda=0$.
\end{corollary}

Indeed, subtracting $1= \iint f(u) g(v)\,du\,dv$ from each side in \eqref{eq:main} and dividing by $\lambda$, we see in the limit $\lambda\to 0$ that the left side of \eqref{eq:mainlog} is bounded by
$$
\frac{d}{d\lambda}\Big|_{\lambda=0} D_{n,\lambda} \,,
$$
where $D_{n,\lambda}$ denotes the constant on the right side of \eqref{eq:main}. This quantity coincides with the right side of \eqref{eq:mainlog}, as an easy computation shows. This argument also shows that equality holds for the stated functions. We emphasize that the authors of \cite{BrFoMo} also showed that the stated functions are the \emph{only} minimizers.
\fi

The next corollary, corresponding to the endpoint $\lambda=Q$, is new.

\begin{corollary}\label{mainent}
 For any non-negative $f\in L^2\log L(\Sph^{2n+1})$ with
$$
\int_{\Sph^{2n+1}} f^2 \,d\zeta = |\Sph^{2n+1}| = \frac{2\pi^{n+1}}{n!}
$$
one has
\begin{equation}
 \label{eq:mainent}
 \iint_{\Sph^{2n+1}\times\Sph^{2n+1}} \frac{|f(\zeta)-f(\eta)|^2}{|1-\zeta\cdot\overline\eta|^{Q/2}} \,d\zeta\,d\eta
\geq \frac{2\pi^{n+1}}{\Gamma(Q/4) \, \Gamma((Q+4)/4)} \int_{\Sph^{2n+1}} f^2\log f^2 \,d\zeta \,.
\end{equation}
The constant $2\pi^{n+1}/(\Gamma(Q/4) \, \Gamma((Q+4)/4))$ is sharp and equality holds for the $L^2$-normalized functions $f$ given in \eqref{eq:optsph} with $\lambda=Q$.
\end{corollary}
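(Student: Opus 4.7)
The plan is to derive Corollary \ref{mainent} by differentiating the sharp inequality \eqref{eq:mainsph} at the endpoint $\lambda = Q$. I would take $f = g$ real and non-negative (permissible since the kernel is positive definite) and start from the algebraic identity
\begin{equation*}
\frac{1}{2} \iint \frac{|f(\zeta) - f(\eta)|^2}{|1-\zeta\cdot\overline\eta|^{\lambda/2}} \, d\zeta\, d\eta = I(\lambda) \|f\|_2^2 - \iint \frac{f(\zeta)\, f(\eta)}{|1-\zeta\cdot\overline\eta|^{\lambda/2}} \, d\zeta\, d\eta,
\end{equation*}
where $I(\lambda) := \int_{\Sph^{2n+1}} |1-\zeta\cdot\overline\eta|^{-\lambda/2}\, d\eta$ is independent of $\zeta$ by unitary invariance. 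Testing \eqref{eq:mainsph} on $f \equiv 1$ (which saturates it, being the case $\xi = 0$ in \eqref{eq:optsph}) identifies $I(\lambda) = C_{n,\lambda}\, A^{(Q-\lambda)/Q}$, where $A := |\Sph^{2n+1}|$ and $C_{n,\lambda}$ is the constant in \eqref{eq:mainsph}. Applying \eqref{eq:mainsph} to the second term and imposing the normalization $\|f\|_2^2 = A$ yields the clean bound
\begin{equation*}
\iint \frac{|f(\zeta) - f(\eta)|^2}{|1-\zeta\cdot\overline\eta|^{\lambda/2}} \, d\zeta\, d\eta \;\geq\; 2\, C_{n,\lambda}\bigl( A^{2/p} - \|f\|_p^2 \bigr).
\end{equation*}

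The next step is a Taylor expansion as $\epsilon := Q - \lambda \to 0^+$. Writing $p - 2 = -2\epsilon/(Q+\epsilon)$ and using
\begin{equation*}
\int f^p \, d\zeta = A + (p-2) \int f^2 \log f \, d\zeta + O\bigl((p-2)^2\bigr)
\end{equation*}
together with the corresponding expansion of $(\,\cdot\,)^{2/p}$, a direct manipulation yields
\begin{equation*}
A^{2/p} - \|f\|_p^2 = \frac{\epsilon}{Q} \int f^2 \log f^2 \, d\zeta + O(\epsilon^2).
\end{equation*}
On the other hand, the explicit form of $C_{n,\lambda}$ together with the pole $\Gamma(\epsilon/2) = 2/\epsilon + O(1)$ gives $\epsilon\, C_{n,\lambda} \to 4\pi^{n+1}/\Gamma^2(Q/4)$. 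Multiplying these two expansions and invoking the identity $\Gamma((Q+4)/4) = (Q/4)\, \Gamma(Q/4)$ to simplify, the right-hand side of the displayed bound converges to $(2\pi^{n+1}/(\Gamma(Q/4)\Gamma((Q+4)/4))) \int f^2 \log f^2\, d\zeta$, which is the precise constant of \eqref{eq:mainent}.

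The final analytic step is to pass $\lambda \to Q^-$ on the left-hand side. For smooth $f$ one has $|f(\zeta) - f(\eta)|^2 \lesssim |1-\zeta\cdot\overline\eta|$ near the diagonal, so the integrand is dominated by a uniform integrable majorant for $\lambda$ in a neighborhood of $Q$ and dominated convergence applies. The inequality then extends to all $f \in L^2 \log L(\Sph^{2n+1})$ by a standard density argument, with the convention that both sides may be $+\infty$. Sharpness of the constant, and the claim that every $L^2$-normalized member of \eqref{eq:optsph} with $\lambda = Q$ achieves equality, follows by tracking an HLS-optimizer $c(\lambda) / |1 - \overline\xi\cdot\zeta|^{(2Q-\lambda)/2}$ along the parameter: the displayed bound above is in fact an equality for every $\lambda < Q$ along this family (the second term of the initial identity equals $C_{n,\lambda} \|f\|_p^2$), and smoothness for $|\xi| < 1$ lets dominated convergence deliver the limiting equality.

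The main obstacle I anticipate is the bookkeeping of the $\epsilon$-expansion: the factor $\epsilon/Q$ arising from $A^{2/p} - \|f\|_p^2$ must combine precisely with the residue of $\Gamma(\epsilon/2)$ inside $C_{n,\lambda}$ to reproduce the stated constant, which is why the identification $I(\lambda) = C_{n,\lambda} A^{(Q-\lambda)/Q}$ via the constant-function optimizer is essential to avoid a direct and more painful evaluation of $I(\lambda)$. Once that is in hand, the proof is the Heisenberg analogue of Beckner's passage from the sharp HLS inequality on $\Sph^N$ to a logarithmic Sobolev inequality.
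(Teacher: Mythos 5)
Your proposal is correct and follows essentially the same route as the paper: you express the difference quadratic form via the constant $I(\lambda) = \tilde D_{n,\lambda}\,|\Sph^{2n+1}|^{(Q-\lambda)/Q}$ (from $f\equiv 1$), apply \eqref{eq:mainsph} to the cross term, and differentiate at $\lambda=Q$, matching the residue of $\Gamma((Q-\lambda)/2)$ against the factor $(Q-\lambda)/Q$ coming from $|\Sph^{2n+1}|^{2/p}-\|f\|_p^2$. The only variation is in the sharpness check, where the paper verifies that the near-constant trial functions $f(\zeta)=\sqrt{1-\epsilon^2}+\epsilon\,\re\zeta_1$ saturate \eqref{eq:mainent} to order $\epsilon^2$, whereas you track the HLS-optimizer family $c(\lambda)/|1-\overline\xi\cdot\zeta|^{(2Q-\lambda)/2}$ to $\lambda=Q$; your variant has the small advantage of directly exhibiting the stated equality cases.
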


Theorems \ref{main} and \ref{mainsph}, as well as Corollaries \ref{mainlog} and \ref{mainent} are proved in Section \ref{sec:proofs}.

%%%%%%%%%%%%%%%%%%%%%%%%%%%%%%%%%%%%%%%%%%%%%%%%%%%%%%%%%%%%%%%%%%%%%%%%555
%%%%%%%%%%%%%%%%%%%%%%%%%%%%%%%%%%%%%%%%%%%%%%%%%%%%%%%%%%%%%%%%%%%%%%%%%%%

\section{The inequality of Jerison and Lee}\label{sec:jl}

As we said in the introduction, the first example \cite{JeLe} of a sharp constant for the Heisenberg group was inequality \eqref{eq:sobjl}. In this section we rederive their result by our methods which, in the $\lambda=Q-2$ case, we believe to be simpler than both the method in \cite{JeLe} and the general $\lambda$ case in the rest of the paper. We do so also to expose the strategy of our proof most clearly. It is easiest for us to work in the formulation on the sphere $\Sph^{2n+1}$, and we do so. Recall that $\mathcal E[u]$ is defined in \eqref{eq:esph}.

\begin{theorem}\label{mainjl}
For all $u\in S^1(\Sph^{2n+1})$ one has
\begin{equation}\label{eq:mainjl}
\mathcal E[u] \geq \frac{n^2}{4} \left(\frac{2\pi^{n+1}}{n!}\right)^{2/Q}  \left( \int_{\Sph^{2n+1}} |u|^{2Q/(Q-2)} \,d\zeta \right)^{(Q-2)/Q} \,,
\end{equation}
with equality if and only if
\begin{equation}
 \label{eq:optjl}
u(\zeta) = \frac{c}{|1-\overline{\xi}\cdot\zeta|^{(Q-2)/2}}
\end{equation}
for some $c\in\C$ and some $\xi\in\C^{n+1}$ with $|\xi|<1$.
\end{theorem}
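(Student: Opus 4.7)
The plan is to follow the strategy outlined in the introduction: establish existence of a positive optimizer, use the conformal symmetry of the inequality to normalize the optimizer via a Hersch-type trick, and then extract the conclusion by showing the second-variation inequality for the normalized optimizer is in fact the \emph{reverse} of \eqref{eq:mainjl}.

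First I would show existence of a nonnegative optimizer $u\in S^1(\Sph^{2n+1})$. The exponent $q=2Q/(Q-2)$ is the CR-critical Sobolev exponent, so compactness fails along sequences concentrating at a point or escaping under the conformal group. I would run a standard concentration–compactness argument: take a minimizing sequence for the quotient $\mathcal{E}[u]/\|u\|_q^2$, apply the CR-refined Sobolev inequality of \cite{GeMeOr,BaGa,BaGeXu}, and use the Brezis–Lieb lemma \cite{BrLi} after possibly pre-composing each element of the sequence with a conformal automorphism of $\Sph^{2n+1}$ to avoid vanishing/escape. This is shorter in the Jerison–Lee case than in the general one because we have a bona fide Hilbert norm on the left and standard elliptic regularity applied to the Euler–Lagrange equation $\mathcal{L}u=\Lambda u^{q-1}$, together with replacing $u$ by $|u|$ and a strong maximum principle, yields a smooth positive minimizer.

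Next I would invoke Hersch's trick to normalize $u$. The CR automorphism group of $\Sph^{2n+1}$ acts transitively and conformally, and under pullback by such a map the quotient $\mathcal{E}[u]/\|u\|_q^2$ is invariant. A standard degree/continuity argument shows that among all conformal images of $u$ there is one whose center of mass vanishes:
\begin{equation*}
\int_{\Sph^{2n+1}} \zeta_j \, u^q \, d\zeta = 0, \qquad j=1,\ldots,n+1.
\end{equation*}
I replace $u$ by this normalized optimizer.

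The third and decisive step is to plug the coordinate functions $\phi_j:=\zeta_j$ as test functions into the second-variation inequality at $u$ for the constrained problem. The orthogonality condition from Hersch's trick is exactly what is needed so that each $\phi_j u$ is admissible: the linearization of the constraint $\int u^q\,d\zeta = \mathrm{const}$ along $\phi_j u$ vanishes. The second-variation inequality then reads, schematically,
\begin{equation*}
\sum_{j=1}^{n+1}\int_{\Sph^{2n+1}}\!\Big(\tfrac12\sum_{k=1}^{n+1}\big(|T_k(\zeta_j u)|^2+|\overline{T_k}(\zeta_j u)|^2\big)+\tfrac{n^2}{4}|\zeta_j|^2u^2\Big)d\zeta \;\geq\; \Lambda(q-1)\sum_{j=1}^{n+1}\int_{\Sph^{2n+1}}|\zeta_j|^2 u^q\,d\zeta.
\end{equation*}
I would now expand the left side using the Leibniz rule for $T_k$ and $\overline{T_k}$, use the explicit values of $T_k\zeta_j$ and $\overline{T_k}\zeta_j$ coming straight from the definitions of these operators, integrate by parts to turn cross terms back into $\mathcal{E}[u]$, and, crucially, sum over $j$ using the identity $\sum_{j=1}^{n+1}|\zeta_j|^2 = 1$. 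The miracle — which is the analogue, in the CR setting, of the Chang–Yang computation on $\Sph^n$ — is that after these simplifications the inequality becomes exactly the \emph{opposite} of \eqref{eq:mainjl} (evaluated at the same $u$), with the sharp constant $(n^2/4)(2\pi^{n+1}/n!)^{2/Q}$ appearing automatically once $u\equiv\mathrm{const}$ is recognized as a solution of the Euler–Lagrange equation with the correct Lagrange multiplier. Combining the two inequalities forces equality, hence $u$ is constant in the normalized frame, and transporting back by the conformal map used for normalization gives precisely the family \eqref{eq:optjl}.

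The main obstacle I expect is the algebraic verification in step three: one must check that the Hermitian cross terms produced by $T_k(\zeta_j u)$ and $\overline{T_k}(\zeta_j u)$, after summation in $j$ and integration by parts on $\Sph^{2n+1}$, assemble into \emph{exactly} $-\mathcal{E}[u]$ plus the right multiple of $\int u^2 d\zeta$, with no spurious terms and with the numerical constant on the nose. This is the point at which the CR-specific geometry enters (through the commutators $[T_j,\overline{T_k}]$ and the zeroth-order term $n^2/4$ in $\mathcal{L}$), and it is the step where computations with special functions — ultimately Jacobi polynomials, as hinted at the end of the introduction — are likely to appear even in this most favorable case.
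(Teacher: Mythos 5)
Your proposal follows the paper's approach almost exactly: existence of a nonnegative optimizer, Hersch-type center-of-mass normalization, the second-variation inequality with test functions $\zeta_j u$, and a summation identity that forces $\mathcal E_0[u]=0$, hence $u$ constant. One small correction to your heuristic: after summing over $j$ and integrating by parts, the terms assemble into the \emph{exact identity} $\sum_j \mathcal E[\zeta_j u]=\mathcal E[u]+\tfrac{n}{2}\int u^2\,d\zeta$ (a $+\mathcal E[u]$, not $-\mathcal E[u]$), which combined with the second-variation inequality $\sum_j\mathcal E[\zeta_j u]\geq(q-1)\mathcal E[u]$ and $q-2=2/n$ yields $\mathcal E_0[u]\leq 0$. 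Also, contrary to your expectation at the end, no Jacobi polynomials are needed here: the identity above follows from the Leibniz rule, $T_k^*=-\overline{T_k}$, the explicit value $\mathcal L\zeta_j=\tfrac{n}{2}(\tfrac{n}{2}+1)\zeta_j$, and $\sum_j|\zeta_j|^2=1$ — this elementary calculation is precisely what makes the $\lambda=Q-2$ case so much simpler than the general $\lambda$ case treated later in the paper.
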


See Appendix \ref{sec:equiv} for the equivalence of the $\Hei^n$-version \eqref{eq:sobjl} and the $\Sph^{2n+1}$-version \eqref{eq:mainjl} of the Sobolev inequality. Both are in \cite{JeLe}.

By a duality argument (cf. \cite[Thm. 8.3]{LiLo} for a Euclidean version) based on the fact \cite{FoSt} that $|u^{-1}v|^{-Q+2}$ is a constant times the Green's function of the sub-Laplacian on $\Hei^n$, this theorem is equivalent to the case $\lambda=Q-2$ of Theorem \ref{mainsph}.

We shall make use of the following elementary formula.

\begin{lemma}\label{gsr}
 For any real-valued $u\in S^1(\Sph^{2n+1})$ one has
\begin{equation}
\label{eq:gsr}
\sum_{j=1}^{n+1} \mathcal E[\zeta_j u] = \mathcal E[u] + \frac{n}{2} \int u^2 \,d\zeta \,.
\end{equation}
\end{lemma}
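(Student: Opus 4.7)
The plan is to apply the Leibniz rule to $T_j(\zeta_k u)$ and $\overline{T_j}(\zeta_k u)$, expand the squared moduli, sum over $j$ and $k$, and use the sphere constraint $\sum_{k=1}^{n+1}|\zeta_k|^2 = 1$ to collapse the result. This is at heart a bookkeeping computation, but the key cancellations rely decisively on working on $\Sph^{2n+1}$.

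First, directly from the definitions one reads off
\[ T_j\zeta_k = \delta_{jk} - \overline{\zeta_j}\zeta_k, \qquad \overline{T_j}\zeta_k = 0, \]
and their complex conjugates. Since $T_j$ and $\overline{T_j}$ are first-order derivations, Leibniz gives
\[ T_j(\zeta_k u) = (\delta_{jk} - \overline{\zeta_j}\zeta_k)\,u + \zeta_k\, T_j u, \qquad \overline{T_j}(\zeta_k u) = \zeta_k\, \overline{T_j} u. \]

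Second, I would expand $|T_j(\zeta_k u)|^2$ into three pieces and sum. The pure $|u|^2$ piece, weighted by $|\delta_{jk} - \overline{\zeta_j}\zeta_k|^2$, yields
\[ \sum_{j,k}|\delta_{jk} - \overline{\zeta_j}\zeta_k|^2 = (n+1) - 1 - 1 + 1 = n \]
after expansion and two applications of $\sum_k|\zeta_k|^2 = 1$. The cross term is $2\Re\sum_{j,k}(\delta_{jk} - \zeta_j\overline{\zeta_k})\zeta_k\overline{u}\,T_j u$; for each fixed $j$ the inner sum over $k$ equals $\zeta_j - \zeta_j\sum_k|\zeta_k|^2 = 0$, so the cross term vanishes. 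The remaining piece is $|\zeta_k|^2|T_j u|^2$, which sums to $\sum_j|T_j u|^2$. The antiholomorphic contribution is immediate: $\sum_{j,k}|\overline{T_j}(\zeta_k u)|^2 = \sum_j|\overline{T_j}u|^2$ since $\overline{T_j}\zeta_k = 0$.

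Combining and multiplying by $1/2$,
\[ \sum_k\mathcal{E}[\zeta_k u] = \frac{1}{2}\int\Big(n|u|^2 + \sum_j\big(|T_j u|^2 + |\overline{T_j}u|^2\big) + \frac{n^2}{2}|u|^2\Big)\,d\zeta = \mathcal{E}[u] + \frac{n}{2}\int u^2\,d\zeta, \]
where the zeroth-order mass term in $\sum_k\mathcal{E}[\zeta_k u]$ collapses via $\sum_k|\zeta_k|^2 = 1$ to the mass term of $\mathcal{E}[u]$. The only nontrivial step is the cancellation of the cross term, which is the signature of working on the unit sphere; the hypothesis that $u$ is real-valued plays no role in the computation itself and is used only to write $u^2$ in place of $|u|^2$ on the right.
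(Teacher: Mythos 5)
Your proof is correct, and it takes a genuinely different route from the paper's. The paper first integrates by parts (using $T_k^*=-\overline{T_k}$) to derive a general identity
$$\mathcal E[\phi u] = \tfrac12 \sum_{k} \int |\phi|^2 \bigl( |T_k u|^2 + |\overline{T_k} u|^2 \bigr)\,d\zeta + \re \int u^2\,\overline\phi\, \mathcal L\phi\,d\zeta,$$
and only then specializes to $\phi=\zeta_j$, using that $\zeta_j$ is an eigenfunction of $\mathcal L$ with eigenvalue $\tfrac{n}{2}(\tfrac n2+1)$ before summing over $j$. You instead expand $|T_j(\zeta_k u)|^2$ and $|\overline{T_j}(\zeta_k u)|^2$ directly via the Leibniz rule and observe that after summing over $k$ the cross terms vanish \emph{pointwise}, thanks to $\sum_k (\delta_{jk}-\zeta_j\overline{\zeta_k})\zeta_k = 0$ on the sphere. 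This completely avoids integration by parts and the computation of $\mathcal L\zeta_j$, so your argument is more elementary and self-contained; the paper's route isolates a reusable formula for $\mathcal E[\phi u]$ valid for arbitrary $\phi$, which could be handy elsewhere, but is overkill for this lemma. Your side remark that reality of $u$ is immaterial to the algebra (entering only in writing $u^2$ for $|u|^2$) is also a small improvement in clarity over the paper's proof, which writes the cross term using $T_k(u^2)$ and $\overline{T_k}(u^2)$ and therefore tacitly uses the hypothesis mid-computation.
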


\begin{proof}
We begin by noting that for any smooth function $\phi$ on $\Sph^{2n+1}$ one has
\begin{align*}
|T_k (\phi u)|^2 + |\overline{T_k} (\phi u)|^2 = & |\phi|^2 \left( |T_k u|^2 + |\overline{T_k} u|^2 \right)
+ u^2 \left( |T_k \phi|^2 + |\overline{T_k} \phi|^2 \right) \\
& + \re \left( T_k(u^2) \overline{\phi} \overline{T_k}\phi + \overline{T_k}(u^2) \overline{\phi} T_k\phi \right) \,.
\end{align*}
We integrate this identity over $\Sph^{2n+1}$ and use the fact that the $L^2$-adjoint of $T_k$ satisfies $T_k^* = -\overline{T_k}$ in order to obtain
\begin{align*}
\int \left( |T_k (\phi u)|^2 + |\overline{T_k} (\phi u)|^2 \right) \,d\zeta & = \int \left( |\phi|^2 \left( |T_k u|^2 + |\overline{T_k} u|^2 \right)
+ u^2 \left( |T_k \phi|^2 + |\overline{T_k} \phi|^2 \right) \right) \,d\zeta \\
& \quad - \re \int u^2 \left( T_k(\overline{\phi} \overline{T_k}\phi) + \overline{T_k}(\overline{\phi} T_k\phi) \right) \,d\zeta \\
& = \int |\phi|^2 \left( |T_k u|^2 + |\overline{T_k} u|^2 \right) \,d\zeta \\
& \quad - \re \int u^2 \overline{\phi} \left( T_k \overline{T_k}\phi + \overline{T_k} T_k\phi) \right) \,d\zeta \,.
\end{align*}
Summing over $k$ we find that
$$
\mathcal E[\phi u] = \frac12 \sum_{k=1}^{n+1} \int |\phi|^2 \left( |T_k u|^2 + |\overline{T_k} u|^2 \right) \,d\zeta
+ \re \int u^2 \overline{\phi} \mathcal L\phi \,d\zeta \,.
$$
We apply this identity to $\phi(\zeta)=\zeta_j$. Using that
$$
T_k \zeta_j = \delta_{j,k} - \overline{\zeta_k} \zeta_j \,,
\qquad
\overline{T_k} \zeta_j = 0 \,,
$$
we find
$$
\mathcal L \zeta_j = \frac{n}{2}\left(\frac{n}{2}+1 \right) \zeta_j \,,
$$
and therefore
$$
\mathcal E[\zeta_j u] = \frac12 \sum_{k=1}^{n+1} \int |\zeta_j|^2 \left( |T_k u|^2 + |\overline{T_k} u|^2 \right) \,d\zeta
+ \frac{n}{2}\left(\frac{n}{2}+1 \right) \int |\zeta_j|^2 u^2 \,d\zeta \,.
$$
Summing over $j$ yields \eqref{eq:gsr} and completes the proof.
\end{proof}

We are now ready to give a short

\begin{proof}[Proof of Theorem \ref{mainjl}]
We know from \cite{JeLe1} that there is an optimizer $w$ for inequality \eqref{eq:mainjl}. (Using the Cayley transform, one can deduce this also from our Proposition~\ref{exmax} and the known form of the fundamental solution of $\mathcal L$.)

As a preliminary remark we note that any optimizer is a complex multiple of a non-negative function. Indeed, if $u=a+ib$ with $a$ and $b$ real functions, then $\mathcal E[u]=\mathcal E[a]+\mathcal E[b]$. We also note that the right side of \eqref{eq:mainjl} is $\|a^2+b^2\|_{q/2}$ with $q=2Q/(Q-2)>2$. By the triangle inequality, $\|a^2+b^2\|_{q/2}\leq \|a^2\|_{q/2}+\|b^2\|_{q/2}$. This inequality is strict unless $a\equiv 0$ or $b^2=\lambda^2 a^2$ for some $\lambda\geq 0$. Therefore, if $w=a+ib$ is an optimizer for \eqref{eq:mainjl}, then either one of $a$ and $b$ is identically equal to zero or else both $a$ and $b$ are optimizers and $|b|=\lambda |a|$ for some $\lambda>0$. For any real $u\in S^1(\Sph^{2n+1})$ its positive and negative parts $u_\pm$ belong to $S^1(\Sph^{2n+1})$ and satisfy $\partial u_\pm/\partial\zeta_k = \pm\chi_{\{\pm u>0\}} \partial u/\partial\zeta_k$ in the sense of distributions. (This can be proved similarly to \cite[Thm. 6.17]{LiLo}.) Thus $\mathcal E[u]=\mathcal E[u_+]+\mathcal E[u_-]$ for real $u$. Moreover, $\| u\|_q^2 \leq \|u_+\|_q^2 + \|u_-\|_q^2$ for real $u$ with strict inequality unless $u$ has a definite sign. Therefore, if $w=a+ib$ is an optimizer for \eqref{eq:mainjl}, then both $a$ and $b$ have a definite sign. We conclude that any optimizer is a complex multiple of a non-negative function. Hence we may assume that $w\geq 0$.

It is important for us to know that we may confine our search for optimizers to functions $u$ satisfying the \emph{center of mass condition}
\begin{equation}\label{eq:comjl}
\int_{\Sph^{2n+1}} \zeta_j\ |u(\zeta)|^q \,d\zeta= 0\,,
\qquad j=1,\ldots ,n+1 \,.
\end{equation}
It is well-known, and used in many papers on this subject (e.g., \cite{He,On,ChYa,BrFoMo}), that this can be assumed, and we give a proof of this fact in Appendix \ref{sec:com}. It uses three facts: one is that inequality \eqref{eq:mainjl} is invariant under $U(n+1)$ rotations of $\Sph^{2n+1}$. The second is that the Cayley transform, that maps $\Hei^n$ to $\Sph^{2n+1}$, leaves the optimization problem invariant. The third is that the $\Hei^n$-version, \eqref{eq:sobjl}, of inequality \eqref{eq:mainjl} is invariant under dilations $F(u) \mapsto \delta^{(Q-2)/2} F(\delta u)$. Our claim in the appendix is that by a suitable choice of $\delta$ and a rotation we can achieve \eqref{eq:comjl}.

We may assume, therefore, that the optimizer $w$ satisfies \eqref{eq:comjl}. Imposing this constraint does not change the positivity of $w$. We shall prove that the only optimizer with this property is the constant function. It follows, then, that the only optimizers without condition \eqref{eq:comjl} are those functions for which the dilation and rotation, just mentioned, yields a constant. In Appendix \ref{sec:com} we identify those functions as the functions stated in \eqref{eq:optjl}.

The second variation of the quotient $\mathcal E[u]/\|u\|_q^2$ around $u=w$ shows that
\begin{equation}
 \label{eq:secvarjl}
\mathcal E[v] \int_{\Sph^{2n+1}} w^q \,d\zeta - (q-1) \mathcal E[w] \int_{\Sph^{2n+1}} w^{q-2} |v|^2 \,d\zeta \geq 0
\end{equation}
for all $v$ with $\int w^{q-1} v \,d\zeta =0$. Inequality \eqref{eq:secvarjl} is proved by first considering \emph{real} variations, in which case it is straightforward, and then handling complex changes $v=a+ib$ by adding the inequalities for $a$ and $b$ and using that $\mathcal E[v]=\mathcal E[a]+\mathcal E[b]$, as noted above.

Because $w$ satisfies condition \eqref{eq:comjl} we may choose $v(\zeta)=\zeta_j w(\zeta)$ in \eqref{eq:secvarjl} and sum over $j$. We find
\begin{equation}\label{eq:secondvarjl}
\sum_{j=1}^{n+1} \mathcal E[\zeta_j w] \geq (q-1) \mathcal E[w] \,.
\end{equation}
On the other hand, Lemma \ref{gsr} with $u=w$ implies
\begin{equation*}
\sum_{j=1}^{n+1} \mathcal E[\zeta_j w] = \mathcal E[w] + \frac{n}{2} \int w^2 \,d\zeta \,,
\end{equation*}
which, together with \eqref{eq:secondvarjl}, yields
$$
\frac{n}{2} \int w^2 \,d\zeta \geq (q-2) \mathcal E[w] \,.
$$
Recalling that $q-2=2/n$, we see that this is the same as
$$
\sum_{j=1}^{n+1} \int \left( |T_j w|^2 + |\overline{T_j} w|^2 \right) \,d\zeta = 0 \,.
$$
Since the operator $\mathcal L-n^2/4$ is positive definite on the orthogonal complement of constants we conclude that $w$ is the constant function, as we intended to prove.
\end{proof}

%%%%%%%%%%%%%%%%%%%%%%%%%%%%%%%%%%%%%%%%%%%%%%%%%%%%%%%%%%%%%%%%%%%%%%%%%%%%%%%%

\section{Existence of an optimizer}

Our goal in this section will be to show that the optimization problem corresponding to inequality \eqref{eq:main} admits an optimizer for all $\lambda$. Our proof relies on the fact that convolution with $|u|^{-\lambda}$ is a positive definite operator. In contrast to the Euclidean case, this property is not completely obvious in the setting of the Heisenberg group and we shall prove it in Subsection \ref{sec:sr}.

This positive definiteness together with a duality argument allows us to reformulate \eqref{eq:main} as a maximization problem with an $L^2$ constraint instead of the $L^p$, $p\neq 2$, constraint appearing in \eqref{eq:main}. We shall prove the existence of an optimizer of this equivalent problem in Subsection \ref{sec:exmax}.

We denote the (non-commutative) convolution on the Heisenberg group by
$$
f*g(u) = \int_{\Hei^n} f(v) g(v^{-1}u) \,dv \,.
$$
Moreover, we introduce the \emph{sublaplacian}
\begin{equation}
 \label{eq:sublap}
\mathcal L := - \frac14 \sum_{j=1}^n \left( \left( \frac\partial{\partial x_j} +2 y_j \frac\partial{\partial t} \right)^2 + \left( \frac\partial{\partial y_j} -2 x_j \frac\partial{\partial t} \right)^2 \right) \,.
\end{equation}
Here, we use the same notation $\mathcal L$ as for the conformal Laplacian on the sphere, but it will be clear from the context which operator is meant.

%%%%%%%%%%%%%%%%%%%%%%%%%%%%%%%%%%%%%%%%%%%%%%%%%%%%%%%%%%%%%%%%%%%%%%%%

\subsection{The operator square root of convolution with $|u|^{-\lambda}$}
\label{sec:sr}

Although it is not obvious, the operator of convolution with the function $|u|^{-\lambda}$ on $\Hei^n$ is positive definite and its operator square root is again a convolution operator. In the Euclidean case, in contrast, the formula
$$
|x-y|^{-\lambda} = \const \int_{\R^N} |x-z|^{-(N+\lambda)/2} |y-z|^{-(N+\lambda)/2} \,dz
$$
shows that convolution with $|x|^{-\lambda}$ is positive definite and, at the same time, provides a formula for its square root. The analogous guess for the Heisenberg group, namely convolution with $|u|^{-(Q+\lambda)/2}$, is, unfortunately, \emph{not} the square root of convolution with $|u|^{-\lambda}$, although it is dimensionally right and it is close to the correct answer. Positive definiteness of $|u|^{-\lambda}$ was shown by \cite{Co} by explicitly computing its eigenvalues. This computation provides a spectral representation for the kernel of the square root as well. There does not seem to be a simple, closed-form expression for this square root as there is for the Euclidean case, and some work is needed to elucidate its properties. In our proof we utilize our `almost correct guess' together with a recent multiplier theorem by M\"uller, Ricci and Stein \cite{MuRiSt}.

\begin{proposition}
 \label{sr}
Let $0<\lambda<Q$. There is a function $k\in L^{2Q/(Q+\lambda)}_{\mathrm w}(\Hei^n)$ such that
\begin{equation}
 \label{eq:sr}
|u^{-1} v|^{-\lambda} = \int_{\Hei^n} k(u^{-1} w) k(v^{-1} w) \,dw
\qquad\text{for all}\ u,v\in\Hei^n \,.
\end{equation}
The function $k$ is real-valued, even and homogeneous of degree $-(Q+\lambda)/2$.
\end{proposition}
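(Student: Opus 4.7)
The plan is to realize $k$ as the convolution kernel of the positive operator square root of $T_\lambda f := f * |\cdot|^{-\lambda}$, and then to control this square root by the ``naive guess'' $k_0(u) := c_\lambda |u|^{-(Q+\lambda)/2}$ via the multiplier theorem of \cite{MuRiSt}.

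First I would record the boundedness and positivity of $T_\lambda$. By \eqref{eq:hlshei}, $T_\lambda$ is bounded from $L^p(\Hei^n)$ to $L^{p'}(\Hei^n)$ with $p=2Q/(2Q-\lambda)$, so the associated quadratic form is densely defined on $L^2$. To obtain positivity I would invoke the spectral computation of Cowling: under the group Fourier transform, convolution by $|u|^{-\lambda}$ acts in each Schr\"odinger representation $\pi_\tau$ as an operator that diagonalizes in the Hermite basis with manifestly positive eigenvalues. The spectral theorem then yields a positive square root $S = T_\lambda^{1/2}$, and uniqueness of this square root forces $S$ to commute with every symmetry of $T_\lambda$, including left translations and the Heisenberg dilations. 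Hence $Sf = f * k$ for a tempered distribution $k$ on $\Hei^n$; dilation covariance of $S$ forces $k$ to be homogeneous of degree $-(Q+\lambda)/2$, and since $T_\lambda$ has a real, inversion-symmetric kernel, $k$ is forced to be real-valued and even.

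It then remains to pass from $k$ being a homogeneous distribution to $k$ being represented by a function in the weak space $L^{2Q/(Q+\lambda)}_{\mathrm w}(\Hei^n)$. Once $k$ is known to be a function bounded on the Kor\'anyi unit sphere $\{|u|=1\}$, the claimed estimate is immediate from homogeneity. I would obtain this bound by comparing $k$ with $k_0$: choose $c_\lambda$ so that $k_0 * k_0$ and $|u|^{-\lambda}$ agree at leading order in the joint spectral calculus of $\mathcal L$ and $-i\partial_t$, so that the ratio of the associated multipliers is bounded, $0$-homogeneous, and $U(n)$-rotation-invariant. This ratio, and hence its square root, satisfies the Marcinkiewicz--H\"ormander-type derivative estimates hypothesized in \cite{MuRiSt}, and applying that multiplier theorem to the square root shows that $k - k_0$ is convolution with a kernel which is locally $L^\infty$ away from the origin. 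Thus $k$ inherits from the explicit $k_0$ the uniform bound on $\{|u|=1\}$, and the homogeneity promotes this to the desired weak-$L^p$ estimate.

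The main obstacle is the last step: the naive guess $k_0$ is only \emph{approximately} a square root, and converting the approximate identity $k_0 * k_0 \sim |u|^{-\lambda}$ at the spectral-multiplier level into pointwise control on $k$ requires both the precise eigenvalue asymptotics from Cowling's computation and the full strength of the joint functional calculus of M\"uller--Ricci--Stein. Without these ingredients one would only recover $L^2$- or distributional information on $k$, which is strictly weaker than the pointwise weak-$L^p$ conclusion stated in the proposition.
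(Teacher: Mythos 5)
Your proposal correctly identifies the core ingredients of the paper's argument: Cowling's explicit diagonalization of convolution by $|u|^{-\lambda}$ in the joint functional calculus of $\mathcal L$ and $\mathcal T = \partial/\partial t$, comparison with the naive kernel $k_0(u)=|u|^{-(Q+\lambda)/2}$, and the M\"uller--Ricci--Stein multiplier theorem to control the correction. It diverges from the paper, and has a genuine gap, precisely where you flag ``the main obstacle''.

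First, the relation between $k$ and $k_0$ is \emph{not} ``approximate at leading order'': the paper establishes the exact identity \eqref{eq:reprsr},
\begin{equation*}
k = a_s^{-1/2} a_{s/2}\, m\bigl(|2\mathcal T|^{-1}\mathcal L\bigr)\, |u|^{-(Q+\lambda)/2},
\end{equation*}
with the explicit multiplier $m$ from \eqref{eq:mult}. Second, and more importantly, to obtain $k\in L^{\sigma}_{\mathrm w}$ with $\sigma = 2Q/(Q+\lambda)$, the paper does not pass through a pointwise bound on the Kor\'anyi unit sphere at all. It shows that the operator $m(|2\mathcal T|^{-1}\mathcal L)$ maps $L^{\sigma}_{\mathrm w}$ into itself, by Stein--Weiss/Marcinkiewicz interpolation from $L^p$-boundedness for all $1<p<\infty$, the latter coming from \cite{MuRiSt} via bounds on $(E\,d/dE)^\nu \log m(E)$ and the digamma integral representation. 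Since $|u|^{-(Q+\lambda)/2}\in L^{\sigma}_{\mathrm w}$, the conclusion follows with no kernel estimate needed.

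The gap in your version is the step from MRS to ``$k-k_0$ is convolution with a kernel which is locally $L^\infty$ away from the origin,'' and thence to a uniform bound for $k$ on $\{|u|=1\}$. As a minor point, $k-k_0$ is a function, not a convolution operator; what you actually need is pointwise control of the function $m(|2\mathcal T|^{-1}\mathcal L)\,k_0$. More substantively, the multiplier theorem of \cite{MuRiSt} delivers $L^p$-boundedness of the multiplier operator, not pointwise kernel regularity; turning Marcinkiewicz-type multiplier conditions into pointwise kernel estimates away from the origin is a separate Calder\'on--Zygmund-type argument that you have not supplied, and that the paper avoids entirely by interpolating to weak-$L^{\sigma}$ instead. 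If you replace your last paragraph with the interpolation step, the rest of your outline (positivity and square root via Cowling, homogeneity and evenness by symmetry) matches the paper's proof.
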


Here `even' means $k(u^{-1})=k(u)$ for all $u\in\Hei^n$ and `homogeneous of degree $\alpha$' means $k(\delta u)=\delta^\alpha k(u)$ for all $u\in\Hei^n$ and all $\delta>0$.

\begin{proof}
 Besides the sublaplacian \eqref{eq:sublap} we shall use the operator $\mathcal T=\frac\partial{\partial t}$. These two operators commute. It was shown by Cowling \cite{Co} (see also \cite[Sec.1]{BrFoMo}) that for $0<s<Q/2$ the function $|u|^{-Q+2s}$ is a constant times the fundamental solution of the operator
\begin{equation}
 \label{eq:ls}
\mathcal L_s := |2\mathcal T|^{s} \, \frac{\Gamma(|2\mathcal T|^{-1}\mathcal L+\tfrac{1+s}{2})}{\Gamma(|2\mathcal T|^{-1}\mathcal L+\tfrac{1-s}{2})} \,.
\end{equation}
More precisely,
\begin{equation}
 \label{eq:fundsol}
(\mathcal L_s^{-1} \delta_0)(u) = a_s |u|^{-Q+2s}\,,
\qquad a_s = \frac{2^{n-s-1} \Gamma^2(\tfrac{Q-2s}4)}{\pi^{n+1} \Gamma(s)} \,,
\end{equation}
where $\delta_0$ denotes a Dirac delta at the point $0$. We note that $\mathcal L_1=\mathcal L$, for which the fundamental solution has been computed in \cite{FoSt}.

For given $\lambda$, we abbreviate $s:=(Q-\lambda)/2$ and define
\begin{equation}
 \label{eq:defk}
k:= a_s^{-1/2}\ \mathcal L_s^{-1/2} \delta_0 \,.
\end{equation}
Since $\mathcal L_s^{-1/2}\mathcal L_s^{-1/2} = \mathcal L_s^{-1}$, this function satisfies
$$
\int_{\Hei^n} k(w^{-1}u) k(v^{-1}w) \,dw = |u^{-1} v|^{-\lambda} \,,
$$
which, modulo the fact that $k$ is even, coincides with \eqref{eq:sr}.

We have to show that the formal definition \eqref{eq:defk} actually defines a function as stated in the proposition.
The key to obtaining these properties is the representation
\begin{equation}
 \label{eq:reprsr}
k= a_s^{-1/2} a_{s/2}\ m(|2\mathcal T|^{-1}\mathcal L) |u|^{-(Q+\lambda)/2}
\end{equation}
with
\begin{equation}
\label{eq:mult}
m(E) := \sqrt{\frac{\Gamma(E+\tfrac{1-s}{2})}{\Gamma(E+\tfrac{1+s}{2})}}
\ \frac{\Gamma(E+\tfrac{2+s}{4})}{\Gamma(E+\tfrac{2-s}{4})} \,.
\end{equation}
Relation \eqref{eq:reprsr} follows from
$$
k= a_s^{-1/2}\ \mathcal L_s^{-1/2} \mathcal L_{s/2} \mathcal L_{s/2}^{-1} \delta_0 = a_s^{-1/2} a_{s/2}\ \mathcal L_s^{-1/2} \mathcal L_{s/2} |u|^{-(Q+\lambda)/2} \,,
$$
where we used \eqref{eq:fundsol}, together with the fact that $\mathcal L_s^{-1/2} \mathcal L_{s/2} = m(|2\mathcal T|^{-1}\mathcal L)$, which follows from \eqref{eq:ls}.

Since the function $|u|^{-(Q+\lambda)/2}$ appearing in \eqref{eq:reprsr} has all the properties stated in the proposition, it remains to check that these are preserved under the operator $m(|2T|^{-1}\mathcal L)$. The operator $|2\mathcal T|^{-1}\mathcal L$, and hence also $m(|2\mathcal T|^{-1}\mathcal L)$, commutes with inversion $u\mapsto u^{-1}$ and with scalings $u\mapsto \delta u$. Since $|u|^{-(Q+\lambda)/2}$ is even and homogeneous of degree $-(Q+\lambda)/2$, the same is true for $k$. Since convolution with $k$ is self-adjoint, the fact that $k$ is even implies that it is real-valued. Moreover, $|u|^{-(Q+\lambda)/2}\in L^{2Q/(Q+\lambda)}_{\mathrm w}$, so in order to deduce the same property for $k$ it suffices to show that $m(|2\mathcal T|^{-1}\mathcal L)$ maps $L^{2Q/(Q+\lambda)}_{\mathrm w}$ into itself. By the Marcinkiewicz interpolation theorem (as extended in \cite[Thm. 3.15]{StWe}) it suffices to show that it maps $L^p$ into itself for all $1<p<\infty$. This, in turn, follows from the multiplier theorem in \cite{MuRiSt} if we can show that $\left(E \frac{d}{dE} \right)^\nu m(E)$ is bounded on $[n/2,\infty)$ for any $\nu\in\N_0$. In fact, we will prove that
$$
\left| \left(E \frac{d}{dE} \right)^\nu \log m(E)\right| \leq C_\nu
\qquad \text{for all}\ E\in [n/2,\infty) \,.
$$
Note that this is only a problem for large $E$. We write
$$
\log m(E) = - \frac 12 \int_{E+\tfrac{1-s}{2}}^{E+\tfrac{1+s}{2}} \psi(t) \,dt
+ \int_{E+\tfrac{2-s}{4}}^{E+\tfrac{2+s}{4}} \psi(t) \,dt
= \int \chi(t-E-\tfrac12) \, \psi(t) \,dt \,,
$$
where $\psi:=(\log\Gamma)'$ denotes the Digamma function and where $\chi(t):=\frac12$ if $|t|\leq \frac s4$, $\chi(t):=-\frac12$ if $\frac s4<|t|\leq \frac s2$ and $\chi(t):=0$ otherwise. The assertion now follows from the integral representation
$$
\psi(t) = \log t-\frac1{2t}-2 \int_0^\infty \frac{\tau\,d\tau}{(\tau^2+t^2)(e^{2\pi\tau}-1)}
$$
(see \cite[(6.3.21)]{AbSt}) and some elementary calculations.
\end{proof}

For later reference we mention a bound with a similar, but much simpler proof.

\begin{lemma}\label{equiv}
 Let $0<\lambda<Q$, $s:=(Q-\lambda)/2$ and let $k$ be the function in Proposition~\ref{sr}. Then there is a constant $C$ such that for all $f\in L^2(\Hei^n)$ one has
$$
\|\mathcal L^{s/2} (f*k) \|_2 \leq C \|f\|_2 \,.
$$
\end{lemma}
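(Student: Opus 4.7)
The plan is to express $\mathcal L^{s/2}(f*k)$ as a bounded Borel function of the commuting self-adjoint operators $\mathcal L$ and $|2\mathcal T|$ applied to $f$, and then invoke the joint functional calculus. Unlike in the proof of Proposition~\ref{sr}, where the M\"uller--Ricci--Stein multiplier theorem was needed to obtain $L^p$ bounds for $p\neq 2$, here only an $L^2$ bound is required, so plain boundedness of the corresponding scalar multiplier on the relevant spectrum will suffice.

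By the definition \eqref{eq:defk} of $k$, convolution (from the right) with $k$ equals $a_s^{-1/2}\mathcal L_s^{-1/2}$ on, say, Schwartz $f$, so
$$
\mathcal L^{s/2}(f*k) = a_s^{-1/2}\,\mathcal L^{s/2}\mathcal L_s^{-1/2} f \,.
$$
Set $E := |2\mathcal T|^{-1}\mathcal L$ and use \eqref{eq:ls}. The factors $|2\mathcal T|^{\pm s/2}$ coming from
$$
\mathcal L^{s/2} = E^{s/2}|2\mathcal T|^{s/2}
\qquad\text{and}\qquad
\mathcal L_s^{-1/2} = |2\mathcal T|^{-s/2}\sqrt{\frac{\Gamma(E+\tfrac{1-s}{2})}{\Gamma(E+\tfrac{1+s}{2})}}
$$
cancel exactly, leaving
$$
\mathcal L^{s/2}\mathcal L_s^{-1/2} = \tilde m(E)\,, \qquad
\tilde m(E) := E^{s/2}\sqrt{\frac{\Gamma(E+\tfrac{1-s}{2})}{\Gamma(E+\tfrac{1+s}{2})}} \,.
$$

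It remains to check that $\tilde m$ is bounded on the spectrum of $E$. Via the Schr\"odinger representations on $\Hei^n$ this spectrum is the discrete set $\{n/2+k:k\in\N_0\}\subset[n/2,\infty)$; since $\lambda>0$ gives $s=(Q-\lambda)/2<Q/2=n+1$, the arguments $E+\tfrac{1\pm s}{2}$ remain strictly positive there, so $\tilde m$ is smooth and positive. Continuity handles any compact subinterval, while Stirling's expansion yields
$$
\frac{\Gamma(E+\tfrac{1-s}{2})}{\Gamma(E+\tfrac{1+s}{2})} = E^{-s}\bigl(1+O(E^{-1})\bigr) \qquad\text{as }E\to\infty,
$$
hence $\tilde m(E)=1+O(E^{-1})$ at infinity, and $M:=\sup_{E\geq n/2}\tilde m(E)<\infty$. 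By Plancherel on $\Hei^n$ (equivalently, by joint spectral calculus of $\mathcal L$ and $|2\mathcal T|$), $\tilde m(E)$ is bounded on $L^2(\Hei^n)$ with operator norm at most $M$, giving the claimed estimate with $C = a_s^{-1/2} M$. The extension from Schwartz $f$ to arbitrary $f\in L^2(\Hei^n)$ is by density.

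There is no serious obstacle in this argument; the only observation worth flagging is that the cancellation of the $|2\mathcal T|^{\pm s/2}$ factors is what reduces the problem to the scalar multiplier $\tilde m$ on the single variable $E$, a quantity with a discrete spectrum bounded below by $n/2$. This is precisely why the $L^2$ bound does not need the deep machinery of \cite{MuRiSt} that is essential in the proof of Proposition~\ref{sr}.
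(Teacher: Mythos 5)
Your argument is correct and follows the paper's own proof essentially verbatim: both reduce to the boundedness on $[n/2,\infty)$ of the scalar multiplier $\tilde m(E)=\sqrt{E^s\,\Gamma(E+\tfrac{1-s}{2})/\Gamma(E+\tfrac{1+s}{2})}$ after the same cancellation of $|2\mathcal T|^{\pm s/2}$ factors, with Stirling controlling the limit at infinity. The extra remarks on the discreteness of the spectrum and the positivity of the Gamma-function arguments are harmless elaborations of what the paper leaves implicit.
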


\begin{proof}
 Since $\mathcal L_s^{1/2} (f*k)= a_s^{-1/2} f$ in the notation of the proof of Proposition \ref{sr}, we have to prove that the operator
$$
\mathcal L^{s/2} \mathcal L_s^{-1/2} = \mathcal L^{s/2} |2\mathcal T|^{-s/2}  \sqrt{\frac{\Gamma(|2\mathcal T|^{-1}\mathcal L+\tfrac{1-s}{2})}{\Gamma(|2\mathcal T|^{-1}\mathcal L+\tfrac{1+s}{2})} }
$$
is bounded in $L^2(\Hei^n)$. Since $\mathcal L$ and $\mathcal T$ commute this follows immediately from the boundedness of the function
$\tilde m(E) = \sqrt{E^s \, \Gamma(E+\tfrac{1-s}{2})/\Gamma(E+\tfrac{1+s}{2}) }$ on $[n/2,\infty)$, which is easily checked using Stirling's formula.
\end{proof}

%%%%%%%%%%%%%%%%%%%%%%%%%%%%%%%%%%%%%%%%%%%%%%%%%%%%%%%%%%%%%%%%%%%%%%%%%%

\subsection{Existence of an optimizer}\label{sec:exmax}

In this subsection we consider the optimization problem
\begin{equation}
 \label{eq:exmax}
\mathcal C_{n,\lambda} := \sup\{ \|f*k\|_q :\ \|f\|_2 =1 \} \,,
\end{equation}
where $k$ is the function in Proposition~\ref{sr} for a fixed $0<\lambda<Q$ and where $q:= 2Q/\lambda$. As we shall explain now, this optimization problem is equivalent to the one corresponding to inequality \eqref{eq:main}. For assume that we can prove that the inequality
\begin{equation}
 \label{eq:dualineq}
\|f*k\|_q \leq \mathcal C_{n,\lambda} \|f\|_2
\end{equation}
has an optimizer. Since $k$ is real and even, the mapping $f\mapsto f*k$ is self-adjoint in $L^2(\Hei^n)$. Hence, by duality, we infer that the inequality
\begin{equation}
 \label{eq:preineq}
\|f*k\|_2 \leq \mathcal C_{n,\lambda} \|f\|_p \,,
\qquad p=q'=2Q/(2Q-\lambda) \,,
\end{equation}
has an optimizer. But the latter inequality is the same as
$$
\iint_{\Hei^n\times\Hei^n} \overline{f(u)} l(u,v) f(v) \,du\,dv \leq \mathcal C_{n,\lambda}^2 \|f\|_p^2
$$
with
$$
l(u,v) := \int_{\Hei^n} k(u^{-1}w) k(v^{-1} w) \,dw = |u^{-1}v|^{-\lambda} \,,
$$
according to Proposition \ref{sr}, and this is inequality \eqref{eq:main} with $f=g$. Since the kernel $|u^{-1}v|^{-\lambda}$ is positive definite, the case $f=g$ is the only one that needs to be considered.

We shall now prove the existence of an optimizer for \eqref{eq:dualineq}.

\begin{proposition}[\textbf{Existence of an optimizer}]\label{exmax}
Let $0<\lambda<Q$, $q:= 2Q/\lambda$ and let $k$ be the function in Proposition~\ref{sr}. Then the supremum \eqref{eq:exmax} is attained. Moreover, for any maximizing sequence $(f_j)$ there is a subsequence $(f_{j_m})$ and sequences $(a_m)\subset\Hei^n$ and $(\delta_m)\subset(0,\infty)$ such that
$$
g_m(u) := \delta_m^{Q/2} f_{j_m}(\delta_m(a_m^{-1}u))
$$
converges strongly in $L^2(\Hei^n)$.
\end{proposition}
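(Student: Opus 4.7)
The plan follows the refined-Sobolev + Brezis-Lieb scheme of G\'erard~\cite{Ge} and Killip-Visan~\cite{KiVi}. The functional $f\mapsto\|f*k\|_q/\|f\|_2$ in \eqref{eq:exmax} is invariant under left-translations and the non-isotropic dilations: a direct computation using $k(\delta u)=\delta^{-(Q+\lambda)/2}k(u)$ together with the identity $q\lambda/2=Q$ shows that the rescalings $T_{a,\delta}f(u):=\delta^{Q/2}f(\delta(a^{-1}u))$ preserve both $\|f\|_2$ and $\|f*k\|_q$. Given a maximizing sequence $(f_j)$ with $\|f_j\|_2=1$ and $\|f_j*k\|_q\to\mathcal C_{n,\lambda}$, the plan is to choose $(a_j,\delta_j)$ so that $g_j:=T_{a_j,\delta_j}f_j$ has a nonzero weak $L^2$ limit, and then to close the argument using Brezis-Lieb.

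The main analytic input is a refined Sobolev inequality on $\Hei^n$ of Bahouri-G\'erard type,
\[
\|f*k\|_q\leq C\,\|f\|_2^{2/q}\,\Bigl(\sup_{t>0,\,v\in\Hei^n} t^{\lambda/4}\bigl|(e^{-t\mathcal L}(f*k))(v)\bigr|\Bigr)^{1-2/q},
\]
which is the $\Hei^n$-analogue of the refinements in \cite{GeMeOr,BaGa,BaGeXu}. Its proof is a Littlewood-Paley decomposition adapted to the sub-Laplacian $\mathcal L$, combined with Lemma~\ref{equiv} (which provides a gain of $s=(Q-\lambda)/2$ sub-elliptic derivatives under convolution with $k$). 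For the maximizing sequence the left side stays bounded below, hence so does the supremum on the right, producing $t_j>0$ and $u_j\in\Hei^n$ with $t_j^{\lambda/4}|(e^{-t_j\mathcal L}(f_j*k))(u_j)|\geq c>0$. Taking $\delta_j:=t_j^{1/2}$ and choosing $a_j$ appropriately, the homogeneity (degree $2$) and left-invariance of $\mathcal L$ give the scaling identity $(e^{-\mathcal L}(g_j*k))(0)=t_j^{\lambda/4}(e^{-t_j\mathcal L}(f_j*k))(u_j)$, so the rescaled maximizing sequence $g_j$ with $\|g_j\|_2=1$ inherits the uniform lower bound $|(e^{-\mathcal L}(g_j*k))(0)|\geq c$.

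Extract a subsequence with $g_j\rightharpoonup g$ weakly in $L^2$. Since convolution by $k$ is bounded $L^2\to L^q$, one has $g_j*k\rightharpoonup g*k$ weakly in $L^q$, and evaluation of $e^{-\mathcal L}(\cdot)$ at the origin is integration against the smooth, rapidly decaying heat kernel $p_1$, which lies in $L^{q'}$. Hence $(e^{-\mathcal L}(g_j*k))(0)\to(e^{-\mathcal L}(g*k))(0)$, the lower bound persists in the limit, and in particular $g\not\equiv 0$. By Lemma~\ref{equiv}, $g_j*k$ is bounded in the homogeneous Folland-Stein space of order $s=(Q-\lambda)/2>0$, so Rellich-Kondrachov on $\Hei^n$ yields, after a further extraction, $g_j*k\to g*k$ in $L^2_\loc$ and almost everywhere. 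The extended Fatou lemma of \cite{BrLi} applied to $g_j*k$ then gives
\[
\|g_j*k\|_q^q=\|g*k\|_q^q+\|(g_j-g)*k\|_q^q+o(1),
\]
while weak $L^2$ convergence gives the exact Hilbert-space splitting $\|g_j-g\|_2^2=1-\|g\|_2^2+o(1)$.

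To close, substitute $\|h*k\|_q\leq\mathcal C_{n,\lambda}\|h\|_2$ for $h=g$ and $h=g_j-g$ into these two identities and let $j\to\infty$: writing $t:=\|g\|_2^2\in(0,1]$, the maximizing property becomes $1\leq t^{q/2}+(1-t)^{q/2}$. Since $q=2Q/\lambda>2$, the elementary inequality $t^{q/2}+(1-t)^{q/2}\leq 1$ on $[0,1]$ (strict away from the endpoints) forces $t=1$. Thus $\|g_j\|_2\to\|g\|_2=1$, which together with $g_j\rightharpoonup g$ gives strong $L^2$ convergence, and $g$ is the desired maximizer. The main obstacle in this plan is establishing the refined Sobolev estimate, which requires a Littlewood-Paley calculus for the sub-Laplacian; everything else is standard soft analysis.
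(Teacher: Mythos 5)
Your argument is correct and follows essentially the same route as the paper: the refined Bahouri--Gallagher inequality plus Lemma~\ref{equiv} to force concentration, rescaling by the heat parameter, weak $L^2$ compactness plus local strong convergence of $g_j*k$, and the Br\'ezis--Lieb lemma to conclude via the strict concavity of $t\mapsto t^{q/2}+(1-t)^{q/2}$. Two small remarks: the paper does not reprove the refined Sobolev estimate (it is quoted directly from \cite{BaGa}, so there is no Littlewood--Paley ``obstacle''), and in place of your appeal to a generic Rellich--Kondrachov theorem on $\Hei^n$ the paper gives a short, self-contained proof (Lemma~\ref{rellich}) of the $L^r_\loc$ and a.e.~convergence of $g_j*k$ by splitting the convolution kernel $|u|^{-(Q+\lambda)/2}$ into its near- and far-field parts and using weak Young; this avoids having to invoke and calibrate a sub-Riemannian compact embedding theorem.
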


Of course, the optimization problem \eqref{eq:exmax} is translation and dilation invariant, which leads to loss of compactness in two ways. What we shall prove is that these are the \emph{only} ways; in other words, after translating back by $(a_m)$ and dilating back by $(\delta_m)$ the maximizing sequence has a strongly convergent subsequence.

Our proof of Proposition \ref{exmax} simplifies and extends proofs in \cite{Ge,KiVi} for the case of the Euclidean Sobolev inequality. It is based on two ingredients. The first one is an improvement of inequality \eqref{eq:dualineq} in terms of a Besov norm, which we quote from \cite{BaGeXu} and \cite{BaGa}. Its statement involves the semi-group $e^{-t\mathcal L}$ of the sub-Laplacian $\mathcal L$ (see \eqref{eq:sublap}) defined as a self-adjoint, non-negative operator in $L^2(\Hei^n)$. The operators $e^{-t\mathcal L}$ are defined by the spectral theorem and extended by continuity to $L^q(\Hei^n)$. We have

\begin{lemma}[\textbf{Refined HLS inequality}]\label{besov}
 Let $0<\lambda<Q$, $q:= 2Q/\lambda$ and let $k$ be the function in Proposition~\ref{sr}. Then there is a constant $c_{\lambda,n}$ such that for any $f\in L^2(\Hei^n)$
$$
\|f*k\|_q \leq c_{\lambda,n} \|f\|_2^{\lambda/Q} \left( \sup_{\beta>0} \beta^{\lambda/4} \| e^{-\beta \mathcal L} (f*k) \|_\infty \right)^{(Q-\lambda)/Q} \,.
$$
\end{lemma}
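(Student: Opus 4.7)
The strategy is to view the lemma as a refined Sobolev embedding in heat-semigroup form and reduce it to the Littlewood-Paley machinery of the cited papers. Set $u := f*k$ and $s := (Q-\lambda)/2$. By Lemma~\ref{equiv}, $\|\mathcal L^{s/2} u\|_2 \leq C\|f\|_2$, so since $\lambda/Q = 2/q$ the claim reduces to the refined Sobolev inequality
$$
\|u\|_q \leq C \|\mathcal L^{s/2} u\|_2^{2/q}\, M^{1-2/q},
\qquad
M := \sup_{\beta>0} \beta^{\lambda/4} \|e^{-\beta\mathcal L}u\|_\infty,
$$
which is the $\Hei^n$ analogue of the G\'erard-Meyer-Oru / Bahouri-G\'erard refinement of Sobolev on $\R^N$.

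As a warm-up, I would derive the \emph{weak}-$L^q$ version by a one-scale layer-cake argument. For $\alpha > 0$, split $u = e^{-\beta\mathcal L} u + (I-e^{-\beta\mathcal L}) u$ and set $\beta(\alpha) := (2M/\alpha)^{4/\lambda}$, so that the first term is pointwise bounded by $\alpha/2$; Chebyshev combined with the spectral bound $|1-e^{-\beta E}| \leq (\beta E)^{s/2}$ (valid for $s \in (0,2]$, with $(I-e^{-\beta\mathcal L})^N$ replacing the single difference when $s > 2$) then gives
$$
|\{|u|>\alpha\}| \leq \frac{4}{\alpha^2}\|(I-e^{-\beta(\alpha)\mathcal L})u\|_2^2 \leq C\,\alpha^{-2}\,\beta(\alpha)^{s}\,\|\mathcal L^{s/2}u\|_2^2.
$$
Using $4s/\lambda = q-2$ and the explicit value of $\beta(\alpha)$ produces the weak-type estimate $\alpha^q |\{|u|>\alpha\}| \leq C M^{q-2}\|\mathcal L^{s/2}u\|_2^2$ with the correct scaling in $M$ and $\|\mathcal L^{s/2}u\|_2$.

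The essential obstacle is promoting this weak bound to a \emph{strong} $L^q$ inequality, because the layer-cake integral $\int_0^\infty \alpha^{q-1}|\{|u|>\alpha\}|\,d\alpha$ diverges logarithmically against the weak estimate alone. The remedy, supplied in \cite{BaGeXu,BaGa}, is a heat-semigroup Littlewood-Paley decomposition adapted to $\mathcal L$: expand $u = \sum_j P_j u$ via dyadic heat-frequency blocks coming from the Calder\'on reproducing formula $u = \int_0^\infty \mathcal L e^{-\beta\mathcal L} u\,d\beta$, estimate each block using $\|P_j u\|_\infty \leq C M\, 2^{j\lambda/2}$ (from the definition of $M$) against $\|P_j u\|_2 \sim 2^{-js}\|\mathcal L^{s/2} P_j u\|_2$ (from Plancherel), and reassemble through the Heisenberg square-function equivalence $\|u\|_q \sim \|(\sum_j |P_j u|^2)^{1/2}\|_q$. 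Since this is precisely the refined Sobolev inequality established in the cited references, I would at this point cite those works directly rather than redo the full Littlewood-Paley analysis on $\Hei^n$.
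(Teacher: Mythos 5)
Your proposal is correct and follows essentially the same route as the paper: both reduce to the refined Sobolev inequality on $\Hei^n$ by setting $\psi=f*k$, invoking Lemma~\ref{equiv} to pass from $\|\mathcal L^{s/2}(f*k)\|_2$ to $\|f\|_2$ with $s=(Q-\lambda)/2$, and then citing \cite{BaGa} (and \cite{BaGeXu}) for the interpolation-type bound $\|\psi\|_q \lesssim \|\mathcal L^{s/2}\psi\|_2^{2/q}\,M^{1-2/q}$. The weak-type warm-up you sketch is a pleasant illustration of why the bound holds but is not part of the paper's argument, which simply quotes the cited inequality directly.
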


To be more precise, the paper \cite{BaGa} contains the inequality
$$
\|\psi\|_q \leq \tilde c_{s,n} \|\mathcal L^{s/2} \psi \|_2^{(Q-2s)/Q} \left( \sup_{\beta>0} \beta^{(Q-2s)/4} \| e^{-\beta \mathcal L} \psi \|_\infty \right)^{2s/Q}
$$
for $0<s<Q/2$. One obtains Lemma \ref{besov} by applying this bound with $s=(Q-\lambda)/2$ to the function $\psi=f*k$ and using Lemma \ref{equiv}.

The second ingredient in our proof of Proposition \ref{exmax} is the following Rellich-Kondrashov-type lemma.

\begin{lemma}[\textbf{a.e. convergence}]\label{rellich}
 Let $0<\lambda<Q$ and let $k$ be the function in Pro\-po\-si\-tion~\ref{sr}. If $(f_j)$ is a bounded sequence in $L^2(\Hei^n)$ then a subsequence of $(f_j*k)$ converges a.e. and in $L^r_{\loc}$ for all $r<2Q/\lambda$.
\end{lemma}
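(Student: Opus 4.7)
The proof follows a Rellich-Kondrashov strategy adapted to the Heisenberg group, combining the subelliptic regularity from Lemma~\ref{equiv} with a mollification by the heat semigroup $e^{-t\mathcal L}$.

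Set $s:=(Q-\lambda)/2$ and $g_j:=f_j*k$. By Lemma~\ref{equiv}, $\|\mathcal L^{s/2}g_j\|_2\leq C\|f_j\|_2$, and by Young's inequality for weak Lebesgue spaces (using $k\in L^{2Q/(Q+\lambda)}_{\mathrm w}$ from Proposition~\ref{sr}), $\|g_j\|_{2Q/\lambda}\leq C\|f_j\|_2$. After extracting a subsequence, $f_j\rightharpoonup f$ weakly in $L^2$, whence $g_j\rightharpoonup g:=f*k$ weakly in $L^{2Q/\lambda}$. Setting $h_j:=g_j-g$, it suffices to prove $h_j\to 0$ in $L^r_{\loc}$ for $r<2Q/\lambda$; a further diagonal extraction then yields an a.e.\ convergent subsequence.

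The heart of the argument consists of two estimates, for each compact $K\subset\Hei^n$: (i) for each fixed $t>0$, $e^{-t\mathcal L}h_j\to 0$ uniformly on $K$ as $j\to\infty$; and (ii) $\|(1-e^{-t\mathcal L})h_j\|_{L^2(K)}\to 0$ as $t\to 0$, uniformly in $j$. For (i), I would write $e^{-t\mathcal L}h_j=h_j*p_t$ with the heat kernel $p_t$; since $p_t$ and its horizontal derivatives are of Schwartz type with respect to the Kor\'anyi norm, $\{h_j*p_t\}_j$ is uniformly bounded and equicontinuous on $K$. Pointwise convergence to $0$ follows from $h_j\rightharpoonup 0$ in $L^{2Q/\lambda}$ and $p_t(\cdot^{-1}u)\in L^{(2Q/\lambda)'}$, and the equicontinuity upgrades this to uniform convergence on $K$. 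For (ii), I would use the spectral-multiplier bound $\sup_{E\geq 0}|1-e^{-tE}|\,E^{-s/2}=t^{s/2}$, valid for $s\leq 2$, together with the identification $h_j=\mathcal L^{-s/2}(\mathcal L^{s/2}h_j)$ (realized as Riesz-potential convolution mapping $L^2\to L^{2Q/\lambda}$), to conclude $\|(1-e^{-t\mathcal L})h_j\|_2\leq C\,t^{s/2}$.

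Combining (i) and (ii) gives $h_j\to 0$ in $L^2_{\loc}$: for $\varepsilon>0$ fix $t$ small so that $\|(1-e^{-t\mathcal L})h_j\|_{L^2(K)}<\varepsilon/2$ for all $j$, then choose $j$ large so that $\|e^{-t\mathcal L}h_j\|_{L^2(K)}<\varepsilon/2$. H\"older interpolation with the $L^{2Q/\lambda}$ bound then delivers $L^r_{\loc}$ convergence for all $2\leq r<2Q/\lambda$, and the range $r<2$ is automatic on compact sets. The main obstacle is estimate (ii) in the regime $s>2$, where the multiplier $(1-e^{-tE})E^{-s/2}$ fails to be bounded near $E=0$ and, additionally, $h_j$ need not lie in $L^2(\Hei^n)$. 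I would resolve this by first localizing $h_j$ via a smooth cutoff $\chi$ equal to $1$ on a neighborhood of $K$ so that $\chi h_j\in L^2$, applying the spectral calculus to $\chi h_j$, and controlling the off-diagonal error $e^{-t\mathcal L}((1-\chi)h_j)$ on $K$ by the Gaussian decay of the heat kernel $p_t$ away from the diagonal.
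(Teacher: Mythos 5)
Your strategy (mollify by the heat semigroup, control the commutator error by the $L^2$-regularity from Lemma~\ref{equiv}) is a natural one, but it has a genuine gap precisely where you flag it, and the flagged fix does not obviously work. The key estimate (ii), $\|(1-e^{-t\mathcal L})h_j\|_2 \lesssim t^{s/2}\|\mathcal L^{s/2}h_j\|_2$, relies on boundedness of the spectral multiplier $(1-e^{-tE})E^{-s/2}$. This multiplier is unbounded near $E=0$ as soon as $s>2$, i.e.\ whenever $\lambda<Q-4=2n-2$, which is a nonempty range for every $n\geq 2$ (and the lemma must cover it). Your proposed repair -- cutoff $\chi$ near $K$, apply spectral calculus to $\chi h_j$, estimate the far-field by heat-kernel decay -- would require a bound on $\|\mathcal L^{s/2}(\chi h_j)\|_2$ uniformly in $j$, which does not follow from Lemma~\ref{equiv}: the commutator $[\mathcal L^{s/2},\chi]$ for a non-integer power $s/2>1$ on the Heisenberg group is a nontrivial object and you do not control it. There is also a smaller unaddressed point: $h_j=(f_j-f)*k$ is known to lie in $L^{2Q/\lambda}$ but not in $L^2(\Hei^n)$, so the identity $h_j=\mathcal L^{-s/2}(\mathcal L^{s/2}h_j)$ and the applicability of $L^2$ spectral calculus to $h_j$ need independent justification (even for $s\leq 2$).

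For comparison, the paper avoids all spectral/heat-kernel machinery. It first replaces $k$ by the explicit $\tilde k(u)=|u|^{-(Q+\lambda)/2}$ using the bounded multiplier $m(|2\mathcal T|^{-1}\mathcal L)$ from Proposition~\ref{sr} (this also sidesteps the fact that $k$ itself is only known to be in weak $L^{2Q/(Q+\lambda)}$, which is not enough to make cutoff pieces square-integrable). Then it splits $\tilde k=\tilde l+\tilde m$ at scale $\rho$: the tail $\tilde l$ is in $L^2$, so weak convergence of $g_j$ gives pointwise convergence of $g_j*\tilde l$ and dominated convergence upgrades this to $L^r(\Omega)$; the core $\tilde m$ is small in $L^s$ for the appropriate $s<2Q/(Q+\lambda)$, so Young's inequality makes $\|(g_j-g)*\tilde m\|_r\leq C\rho^\alpha$ with $\alpha>0$. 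Taking $\rho$ small and then $j$ large finishes. This is elementary, works uniformly in $\lambda$, and uses nothing beyond weak Young, whereas your argument in its current form only covers $\lambda\geq Q-4$.
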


\begin{proof}[Proof of Lemma \ref{rellich}]
We will need to replace $k(u)$ by $\tilde k(u):= |u|^{-(Q+\lambda)/2}$. (The reason for this is that Proposition \ref{sr} does not guarantee that $k$ is square-integrable on a sphere; if it is, then the homogeneity will guarantee that $k$ is square integrable at infinity.) We define $g_j:= m(|2\mathcal T|^{-1}\mathcal L) f_j$ with the multiplier $m(|2\mathcal T|^{-1}\mathcal L)$ given by \eqref{eq:mult}. As we have seen in the proof of Proposition \ref{sr}, this is a bounded operator in $L^2$, and hence $(g_j)$ is a bounded sequence in $L^2$. Below we shall prove that a subsequence of $(g_j*\tilde k)$ converges a.e. and in $L^r_{\loc}$ for all $r<2Q/\lambda$. Since $f_j*k=a_s^{-1/2} a_{s/2}\ g_j*\tilde k$ by \eqref{eq:reprsr}, this will prove the assertion of the lemma.

Since $g_j$ is bounded in $L^2(\Hei^n)$ we can (modulo passing to a subsequence) assume that it converges weakly to some $g$. We shall prove that for any set $\Omega\subset\Hei^n$ of finite measure and any $r<2Q/\lambda$, $(g_j*\tilde k)$ converges to $g*\tilde k$ in $L^r(\Omega)$. This implies, as is well known, that a subsequence of $(g_j*\tilde k)$ converges to $g*\tilde k$ a.e. on $\Omega$, and the claim then follows by a diagonal argument.

In order to prove the claimed convergence in $L^r(\Omega)$, we decompose $\tilde k=\tilde l+\tilde m$, where $\tilde l(u)=\tilde k(u)\chi_{\{|u|>\rho\}}$ and $\tilde m(u)=\tilde k(u)\chi_{\{|u|<\rho\}}$, and where $\rho>0$ is a parameter to be chosen later. Since, for any fixed $u$, the function $\tilde l(v^{-1}u)$ is square integrable with respect to $v$, weak convergence implies that $g_j*\tilde l \to g*\tilde l$ pointwise. Moreover, $|(g_j*\tilde l)(u)| \leq \|g_j\|_2 \|\tilde l\|_2\leq C(\rho)$, independent of $u$ and $j$. By dominated convergence, this implies that $g_j*\tilde l\to g*\tilde l$ in $L^r(\Omega)$.

In order to control $g_j*\tilde m$, we let $s:= 2r/(r+2)$ and note that $s< 2Q/(Q+\lambda)=:\sigma$. Hence $\tilde m\in L^s(\Hei^n)$ and $\|\tilde m\|_s=\const \rho^\alpha$ with $\alpha=Q(1/s-1/\sigma)>0$. Hence by Young's inequality on $\Hei^n$ and the boundedness of $(g_j)$ we find that $\|(g_j-g)*\tilde m\|_r \leq \|g_j-g\|_2 \|\tilde m\|_s \leq \const \rho^\alpha$. Choosing first $\rho$ small and then $j$ large we verify the claimed convergence in $L^r$.
\end{proof}

The following consequence of Lemmas \ref{besov} and \ref{rellich} is the crucial ingredient to prove the existence of an optimizer in \eqref{eq:exmax}.

\begin{corollary}\label{concomp}
 Let $0<\lambda<Q$, $q:= 2Q/\lambda$ and let $k$ be the function in Proposition~\ref{sr}. Let $(f_j)$ be a bounded sequence in $L^2(\Hei^n)$. Then one of the following alternatives occurs.
\begin{enumerate}
 \item $(f_j*k)$ converges to zero in $L^q(\Hei^n)$.
\item There is a subsequence $(f_{j_m})$ and sequences $(a_m)\subset\Hei^n$ and $(\delta_m)\subset(0,\infty)$ such that
$$
g_m(u) := \delta_m^{Q/2} f_{j_m}(\delta_m(a_m^{-1}u))
$$
converges weakly in $L^2(\Hei^n)$ to a function $g\not\equiv 0$. Moreover, $(g_m*k)$ converges a.e. and in $L^r_{\loc}(\Hei^n)$, $r<q$, to $g*k$.
\end{enumerate}
\end{corollary}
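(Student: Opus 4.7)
I will argue the contrapositive: assume alternative (1) fails, so after passing to a subsequence $\|f_j*k\|_q \geq c > 0$ for all $j$. Combined with the uniform bound $\|f_j\|_2 \leq C$, the refined inequality of Lemma \ref{besov} forces
$$
\sup_{\beta>0,\, y\in\Hei^n} \beta^{\lambda/4} \bigl|e^{-\beta \mathcal L}(f_j*k)(y)\bigr| \geq c' > 0,
$$
so I may pick $\beta_j > 0$ and $y_j \in \Hei^n$ with $\beta_j^{\lambda/4} |e^{-\beta_j \mathcal L}(f_j*k)(y_j)| \geq c'/2$.

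I then normalize this concentration point to $(\beta,y) = (1,e)$ by a Heisenberg dilation-translation. Set $\delta_j := \sqrt{\beta_j}$ and choose $a_j \in \Hei^n$ so that $\delta_j a_j^{-1} = y_j$, and define $g_j(u) := \delta_j^{Q/2} f_j(\delta_j(a_j^{-1}u))$. This is an $L^2$-isometry, so $\|g_j\|_2 = \|f_j\|_2$ is bounded. Using that convolution on $\Hei^n$ is left-invariant, that $k$ is homogeneous of degree $-(Q+\lambda)/2$, and that $\mathcal L$ is homogeneous of degree $2$, a direct computation gives
$$
e^{-\mathcal L}(g_j*k)(u) = \delta_j^{\lambda/2}\, \bigl(e^{-\beta_j \mathcal L}(f_j*k)\bigr)(\delta_j a_j^{-1} u),
$$
and evaluation at $u=e$ yields $|e^{-\mathcal L}(g_j*k)(e)| = \beta_j^{\lambda/4}|e^{-\beta_j \mathcal L}(f_j*k)(y_j)| \geq c'/2$.

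By $L^2$-weak compactness a subsequence $g_{j_m}$ converges weakly to some $g \in L^2(\Hei^n)$. The functional $h \mapsto e^{-\mathcal L}(h*k)(e)$ is bounded on $L^2$: indeed, $h \mapsto h*k$ is bounded from $L^2$ into $L^q$ (this is \eqref{eq:dualineq}, which is equivalent to Proposition \ref{sr}), and $F \mapsto e^{-\mathcal L}F(e) = \int F(v)\, p_1(v^{-1})\,dv$ maps $L^q \to \C$ continuously because the Heisenberg heat kernel $p_1$ is Schwartz and lies in $L^{q'}(\Hei^n)$. Passing to the weak limit yields $|e^{-\mathcal L}(g*k)(e)| \geq c'/2$, hence $g \not\equiv 0$. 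Finally, Lemma \ref{rellich} applied to the bounded $L^2$-sequence $(g_{j_m})$ produces a further subsequence along which $g_{j_m}*k$ converges almost everywhere and in $L^r_\loc$ for every $r<q$. The limit is identified as $g*k$ by following the proof of Lemma \ref{rellich}: weak $L^2$-convergence $g_{j_m}\rightharpoonup g$ is preserved under the bounded multiplier $m(|2\mathcal T|^{-1}\mathcal L)$, and the near/far decomposition of $\tilde k = |u|^{-(Q+\lambda)/2}$ carried out there identifies the limit of $g_{j_m}*\tilde k$ as $g*\tilde k$, equivalently of $g_{j_m}*k$ as $g*k$.

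I expect the main obstacle to be the joint scale-position concentration: Lemma \ref{besov} delivers a single supremum over the combined variable $(\beta,y) \in (0,\infty)\times \Hei^n$, and it is precisely because the Heisenberg dilation-translation group acts transitively on this parameter space that the concentration point can be normalized to $(1,e)$ by a single group element $(\delta_j,a_j)$. Absent this refined single-parameter bound one would be forced into a Lions-type profile decomposition as in \cite{Ge,KiVi}; here, by contrast, the choice $\delta_j=\sqrt{\beta_j}$ together with the algebraic equation $\delta_j a_j^{-1}=y_j$ suffices.
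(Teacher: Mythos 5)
Your proof is correct and follows essentially the same strategy as the paper: apply Lemma~\ref{besov} to produce concentration parameters $(\beta_j, y_j)$, rescale and translate to move the concentration to $(1,e)$, show the weak limit is nonzero by testing against a fixed element of the dual, and finish with Lemma~\ref{rellich}. The only cosmetic differences are that you verify nontriviality of $g$ by factoring $h\mapsto e^{-\mathcal L}(h*k)(e)$ through $L^q$ (using the a priori Folland--Stein bound plus $p_1\in L^{q'}$), whereas the paper pairs $g_j$ directly against the explicit $L^2$ function $H(w)=\int k(w^{-1}v)G(v^{-1})\,dv$ obtained from the heat kernel representation $e^{-\beta\mathcal L}f=\beta^{-Q/2}f*G(\beta^{-1/2}\,\cdot\,)$; the two are equivalent.
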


\begin{proof}
 Let $(f_j)$ be bounded in $L^2(\Hei^n)$ and assume that $(f_j*k)$ does not converges to zero in $L^q(\Hei^n)$. Then, after passing to a subsequence, we may assume that $\|f_j*k\|_q \geq \epsilon$ for some $\epsilon>0$ and all $j$. Since $(f_j)$ is bounded in $L^2(\Hei^n)$, Lemma \ref{besov} yields
$$
\sup_{\beta>0} \beta^{\lambda/4} \| e^{-\beta \mathcal L} (f_j*k) \|_\infty \geq \tilde\epsilon \,, 
$$
that is, there are $\beta_j>0$ and $u_j\in\Hei^n$ such that
$$
\left| \beta_j^{\lambda/4} \left( e^{-\beta_j \mathcal L} (f_j*k) \right)(u_j) \right| \geq \tilde\epsilon \,.
$$
Next, we use the fact that $e^{-\beta \mathcal L}$ is a convolution operator. More precisely, there is a smooth, rapidly decaying function $G$ on $\Hei^n$ such that
$$
e^{-\beta \mathcal L} f = \beta^{-Q/2} f * G(\beta^{-1/2} \cdot) \,,
$$
see, e.g., \cite{Ga,Hu}. Therefore we can rewrite
\begin{align*}
\beta_j^{\lambda/4} \left( e^{-\beta_j \mathcal L} (f_j*k) \right)(u_j)
& = \beta_j^{-(2Q-\lambda)/4} \iint f_j(w) k(w^{-1}v) G(\beta_j^{-1/2} (v^{-1} u_j)) \,dv\,dw \\
& = \beta_j^{Q/4} \iint f_j(u_j (\sqrt{\beta_j}w)) k(w^{-1}v) G(v^{-1} ) \,dv\,dw \\
& = \int g_j(w) H(w) \,dw
\end{align*}
with $g_j(w):= \beta_j^{Q/4} f_j(u_j (\sqrt{\beta_j}w))$ and $H(w) := \int k(w^{-1}v) G(v^{-1} ) \,dv$. Since $\|g_j\|_2 = \|f_j\|_2$ is bounded, the Banach-Alaoglu theorem implies that (after extracting a subsequence if necessary) $(g_j)$ converges weakly in $L^2(\Hei^n)$ to some $g$. Since $k\in L^{2Q/(Q+\lambda)}_{\mathrm w}$ by Proposition \ref{sr} and since $G\in L^s$ for all $s$, in particular for $s=2Q/(2Q-\lambda)$, we infer from the weak Young inequality that $H\in L^2(\Hei^n)$. Therefore by weak convergence
$$
\left| \int g(w) H(w) \,dw \right| = \lim_{j\to\infty} \left| \int g_j(w) H(w) \,dw \right|
= \lim_{j\to\infty} \left| \beta_j^{\lambda/4} \left( e^{-\beta_j \mathcal L} (f_j*k) \right)(u_j) \right|
\geq \tilde\epsilon \,,
$$
which implies that $g\not\equiv 0$. The remaining assertions now follow from Lemma \ref{rellich}.
\end{proof}

Given Corollary \ref{concomp} the existence of an optimizer of \eqref{eq:exmax} follows as in \cite[Lemma 2.7]{Li}. We include the proof for the sake of completeness.

\begin{proof}[Proof of Proposition \ref{exmax}]
 Let $(f_j)$ be a maximizing sequence normalized by $\|f_j\|_2 = 1$. After translations, dilations and passage to a subsequence Corollary \ref{concomp} allows us to assume that $(f_j)$ converges weakly in $L^2$ to a function $f\not\equiv 0$. Moreover, $(f_j*k)$ converges a.e. to $f*k$.

The weak convergence implies that
\begin{equation}
 \label{eq:weakconv}
1= \|f_j\|_2^2 = \|f\|_2^2 + \|f_j-f\|_2^2 + o(1) \,,
\end{equation}
where $o(1)$ denotes something that goes to zero as $j\to\infty$. On the other hand, the pointwise convergence together with the improved Fatou lemma from \cite{BrLi} implies that
\begin{equation}
 \label{eq:bl}
\|f_j*k\|_q^q = \|f*k\|_q^q + \|(f_j-f)*k\|_q^q + \tilde o(1) \,,
\end{equation}
where, again, $\tilde o(1)\to 0$ as $j\to\infty$. Since for $a,b,c\geq 0$ and $q\geq 2$, $(a^q+b^q+c^q)^{2/q} \leq a^2+b^2+c^2$, we have
\begin{equation}
 \label{eq:exmaxproof}
\|f_j*k\|_q^2 \leq  \|f*k\|_q^2 + \|(f_j-f)*k\|_q^2 + \tilde o(1)^{2/q} \,.
\end{equation}
We now estimate the second term on the right side using \eqref{eq:weakconv} and get
$$
\|(f_j-f)*k\|_q^2 \leq \mathcal C_{n,\lambda}^2 \|f_j-f\|_2^2 
= \mathcal C_{n,\lambda}^2 \left( 1- \|f\|_2^2- o(1) \right) \,.
$$
Letting $j\to\infty$ and noting that the left side of \eqref{eq:exmaxproof} converges to $\mathcal C_{n,\lambda}^2$, we conclude that $0\leq \|f*k\|_q^2 - \mathcal C_{n,\lambda}^2 \|f\|_2^2$. Since $f\not\equiv 0$, this implies that $f$ is an optimizer.

In order to see that the convergence of $(f_j)$ in $L^2(\Hei^n)$ is strong, we need to show that $\|f\|_2 =1$. Assume that this is not the case. Then by weak convergence and \eqref{eq:weakconv}, $m:=\|f\|_2^2 \in (0,1)$ and $\lim \|f_j-f\|_2^2=1-m$. Hence by \eqref{eq:bl}
$$
\mathcal C_{n,\lambda}^q = \limsup \|f_j*k\|_q^q \leq \mathcal C_{n,\lambda}^q \left( m^{q/2} + (1-m)^{q/2} \right) \,.
$$
Since $m^{q/2} + (1-m)^{q/2}<1$ for $m\in (0,1)$ we arrive at a contradiction. This completes the proof of Proposition \ref{exmax}.
\end{proof}

%%%%%%%%%%%%%%%%%%%%%%%%%%%%%%%%%%%%%%%%%%%%%%%%%%%%%%%%%%%%%%%%%%%%%%%%%%%%%%%%%%%%%%%%%%%%%%

%%%%%%%%%%%%%%%%%%%%%%%%%%%%%%%%%%%%%%%%%%%%%%%%%%%%%%%%%%%%%%%%%%%%%%%%%%%%%%%%%%

\section{Proof of the main theorems}\label{sec:proofs}

Our goal in this section is to compute the sharp constant in inequality \eqref{eq:mainsph} on the sphere $\Sph^{2n+1}$. We shall proceed as in the proof of Theorem \ref{mainjl}. We outline the proof in Subsection \ref{sec:strategy} and reduce everything to the proof of a \emph{linear} inequality. After some preparations in Subsection \ref{sec:funk} we shall prove this inequality in Subsection~\ref{sec:mainproof}.

%%%%%%%%%%%%%%%%%%%%%%%%%%%%%%%%%%%%%%%%%%%%%%%%%%%%%%%%%%%%%%%%%%%%%%%%%%%%%%%%%%%

\subsection{Strategy of the proof}\label{sec:strategy}

\emph{Step 1.} The optimization problem corresponding to \eqref{eq:mainsph} admits an optimizing pair with $f=g$. This has been shown in Subsection \ref{sec:exmax} for inequality \eqref{eq:main} on the Heisenberg group. The result for the inequality on the sphere follows via the Cayley transform as explained in Appendix \ref{sec:equiv}.

We claim that any optimizer for problem \eqref{eq:mainsph} with $f=g$ is a complex multiple of a non-negative function. Indeed,
if we denote the double integral on the left side of \eqref{eq:mainsph} with $g=f$ by $I[f]$ and if $f=a+ib$ for real functions $a$ and $b$, then $I[f]=I[a]+I[b]$. Moreover, for any numbers $\alpha,\beta,\gamma,\delta\in\R$ one has $\alpha\gamma+\beta\delta \leq \sqrt{\alpha^2+\beta^2}\sqrt{\gamma^2+\delta^2}$ with strict inequality unless $\alpha\gamma+\beta\delta\geq 0$ and $\alpha\delta=\beta\gamma$. Since the kernel $|1-\zeta\cdot\overline\eta|^{-\lambda/2}$ is strictly positive, we infer that $I[a]+I[b]\leq I[\sqrt{a^2+b^2}]$ for any real functions $a,b$ with strict inequality unless $a(x)a(y)+b(x)b(y)\geq 0$ and $a(x)b(y)=a(y)b(x)$ for almost every $x,y\in\R^N$. From this one easily concludes that any optimizer is a complex multiple of a non-negative function.

We denote a non-negative optimizer for problem \eqref{eq:mainsph} by $h:=f=g$. Since $h$ satisfies the Euler-Lagrange equation
$$
\int_{\Sph^{2n+1}} \frac{h(\eta)}{{|1-\zeta\cdot\overline\eta|^{\lambda/2}}} \,d\eta = c\, h^{p-1}(\zeta) \,,
$$
we see that $h$ is \emph{strictly} positive.

\emph{Step 2.} As in the proof of Theorem \ref{mainjl}, we may assume that the center of mass of $h^p$ vanishes, that is,
\begin{equation}
 \label{eq:com}
\int_{\Sph^{2n+1}} \zeta_j \, h(\zeta)^p \,d\zeta = 0
\qquad \text{for} \ j=1,\ldots,n+1 \,.
\end{equation}
We shall prove that the only non-negative optimizer satisfying \eqref{eq:com} is the constant function. Then, for exactly the same reason as in the proof of Theorem \ref{mainjl}, the only optimizers without condition \eqref{eq:com} are the ones stated in \eqref{eq:optsph}.

\emph{Step 3.} The second variation around the optimizer $h$ shows that
\begin{equation}
 \label{eq:secvar}
\iint \frac{\overline{f(\zeta)}\ f(\eta)}{|1-\zeta\cdot\overline\eta|^{\lambda/2}} \,d\zeta\,d\eta 
\ \int h^{p} \,d\zeta
- (p-1) \iint \frac{h(\zeta)\ h(\eta)}{|1-\zeta\cdot\overline\eta|^{\lambda/2}} \,d\zeta\,d\eta
\ \int h^{p-2} |f|^2 \,d\zeta \leq 0
\end{equation}
for any $f$ satisfying $\int h^{p-1} f \,d\zeta =0$. Note that the term $h^{p-2}$ causes no problems (despite the fact that $p<2$) since $h$ is strictly positive. In order to prove \eqref{eq:secvar} we proceed as in \eqref{eq:secvarjl}, considering real and imaginary perturbations separately.

Because of \eqref{eq:com} the functions $f(\zeta)=\zeta_j h(\zeta)$ and $f(\zeta)=\overline{\zeta_j} h(\zeta)$ satisfy the constraint $\int h^{p-1} f \,d\zeta =0$. Inserting them in \eqref{eq:secvar} and summing over $j$ we find
\begin{equation}
 \label{eq:secvarineq}
\iint \frac{h(\zeta)\ \left( \zeta\cdot\overline{\eta} + \overline{\zeta}\cdot\eta \right) \ h(\eta)}{|1-\zeta\cdot\overline\eta|^{\lambda/2}} \,d\zeta\,d\eta - 2(p-1) \iint \frac{h(\zeta)\ h(\eta)}{|1-\zeta\cdot\overline\eta|^{\lambda/2}} \,d\omega\,d\eta \leq 0 \,.
\end{equation}

\emph{Step 4.} This is the crucial step! The proof of Theorem \ref{mainsph} is completed by showing that for \emph{any} (not necessarily maximizing) $h$ the \emph{opposite} inequality to \eqref{eq:secvarineq} holds and is indeed strict unless the function is constant. This is the statement of the following theorem with $\alpha=\lambda/4$, noting that $2(p-1) = 2\alpha/(n+1-\alpha)$.

\begin{theorem}\label{keyineq}
 Let $0<\alpha<(n+1)/2$. For any $f$ on $\Sph^{2n+1}$ one has
\begin{equation}\label{eq:keyineq}
 \iint \frac{\overline{f(\zeta)}\ \left( \zeta\cdot\overline{\eta} + \overline\zeta\cdot\eta \right) \ f(\eta)}{|1-\zeta\cdot\overline\eta|^{2\alpha}} \,d\zeta\,d\eta 
\geq \frac{2\alpha}{n+1-\alpha} \iint \frac{\overline{f(\zeta)}\ f(\eta)}{|1-\zeta\cdot\overline\eta|^{2\alpha}} \,d\zeta\,d\eta
\end{equation}
with equality iff $f$ is constant.
\end{theorem}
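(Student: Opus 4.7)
My plan is to prove \eqref{eq:keyineq} by spectral diagonalization. Decompose $L^2(\Sph^{2n+1})=\bigoplus_{j,k\geq 0}\mathcal H_{j,k}$ into its irreducible $U(n+1)$-components, the bigraded spherical harmonic spaces $\mathcal H_{j,k}$ of restrictions to $\Sph^{2n+1}$ of harmonic polynomials of bidegree $(j,k)$ in $(\zeta,\overline\zeta)$. Both integral kernels
$$K(\zeta,\eta) := |1-\zeta\cdot\overline\eta|^{-2\alpha}, \qquad \widetilde K(\zeta,\eta) := (\zeta\cdot\overline\eta + \overline\zeta\cdot\eta)\,K(\zeta,\eta)$$
are invariant under the diagonal $U(n+1)$-action, so by Schur's lemma the associated integral operators act as scalars $\mu_{j,k}$ and $\widetilde\mu_{j,k}$ on each $\mathcal H_{j,k}$. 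Expanding $f=\sum f_{j,k}$ with $f_{j,k}\in\mathcal H_{j,k}$, Plancherel reduces \eqref{eq:keyineq} to the family of scalar bounds
$$\widetilde\mu_{j,k} \;\geq\; \tfrac{2\alpha}{n+1-\alpha}\,\mu_{j,k}, \qquad (j,k)\in\N_0^2,$$
with equality required precisely at $(j,k)=(0,0)$.

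To obtain $\mu_{j,k}$ I would combine the binomial expansion
$$|1-\zeta\cdot\overline\eta|^{-2\alpha} \;=\; \sum_{p,q\geq 0}\frac{(\alpha)_p\,(\alpha)_q}{p!\,q!}\,(\zeta\cdot\overline\eta)^p(\overline\zeta\cdot\eta)^q$$
with the Funk--Hecke / reproducing-kernel computations on the complex sphere prepared in Subsection~\ref{sec:funk}, which express the projections of the zonal polynomials $(\zeta\cdot\overline\eta)^p(\overline\zeta\cdot\eta)^q$ onto $\mathcal H_{j,k}$ in terms of Jacobi polynomials. The outcome is a closed form of the shape
$$\mu_{j,k} \;=\; c_n\;\frac{\Gamma(\alpha+j)\,\Gamma(\alpha+k)}{\Gamma(n+1-\alpha+j)\,\Gamma(n+1-\alpha+k)},$$
with $c_n>0$ independent of $(j,k)$.

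For $\widetilde\mu_{j,k}$ I would exploit the elementary identity $w(1-w)^{-\alpha} = (1-w)^{-\alpha} - (1-w)^{1-\alpha}$ applied to $w=\zeta\cdot\overline\eta$ and its conjugate, which rewrites
$$\widetilde K(\zeta,\eta) \;=\; 2K(\zeta,\eta) \;-\; K_{\alpha-1,\alpha}(\zeta,\eta) \;-\; K_{\alpha,\alpha-1}(\zeta,\eta),$$
where $K_{a,b}(\zeta,\eta):=(1-\zeta\cdot\overline\eta)^{-a}(1-\overline\zeta\cdot\eta)^{-b}$. The kernels $K_{a,b}$ are also $U(n+1)$-invariant and their eigenvalues $\nu^{(a,b)}_{j,k}$ on $\mathcal H_{j,k}$ follow from exactly the same Funk--Hecke method applied to the binomial expansions of $(1-w)^{-a}$ and $(1-\overline w)^{-b}$, yielding
$$\widetilde\mu_{j,k} \;=\; 2\mu_{j,k} - \nu^{(\alpha-1,\alpha)}_{j,k} - \nu^{(\alpha,\alpha-1)}_{j,k}$$
in closed form.

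With the closed forms in hand, the target inequality $\widetilde\mu_{j,k}\geq \tfrac{2\alpha}{n+1-\alpha}\mu_{j,k}$ becomes a concrete inequality among products of Pochhammer symbols $(\alpha)_j,(\alpha)_k,(n+1-\alpha)_j,(n+1-\alpha)_k$. The hypothesis $\alpha<(n+1)/2$ gives $n+1-\alpha>\alpha$, so each ratio $(n+1-\alpha+m)/(\alpha+m)>1$ for $m\geq 0$, and I expect the desired bound to follow from a termwise monotonicity comparison after clearing denominators, with strict inequality whenever $j+k\geq 1$. Equality therefore forces $f_{j,k}=0$ for all $(j,k)\neq(0,0)$, i.e., $f$ constant. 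The main obstacle I anticipate is not the structural Schur-lemma argument but this final bookkeeping: producing the closed forms cleanly and verifying the gamma-function inequality in all bigrades, consistent with the authors' own warning that ``many computations with Jacobi polynomials are needed''.
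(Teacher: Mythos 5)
Your plan follows the same structural path as the paper's proof: decompose $L^2(\Sph^{2n+1})$ into the bigraded spherical harmonics $\mathcal H_{j,k}$, invoke Schur's lemma to diagonalize both kernels, and reduce the operator inequality to a two-parameter family of scalar bounds $\widetilde\mu_{j,k}\geq \tfrac{2\alpha}{n+1-\alpha}\mu_{j,k}$. The eigenvalue formula $\mu_{j,k}=E_{j,k}$ you anticipate is exactly Corollary~\ref{ev}(1). Where you genuinely diverge is the algebraic decomposition of the numerator kernel. The paper expands $|1-w|^2 = 1-(w+\bar w)+|w|^2$, which converts the problem into computing the eigenvalue of the symmetric kernel $|w|^2\,|1-w|^{-2\alpha}$; this is Corollary~\ref{ev}(2) and requires the Jacobi integral \eqref{eq:jacobiint} with an extra factor of $(1+t)$, followed by a somewhat involved simplification. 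You instead use $w(1-w)^{-\alpha}=(1-w)^{-\alpha}-(1-w)^{1-\alpha}$, giving $\widetilde K = 2K - K_{\alpha-1,\alpha}-K_{\alpha,\alpha-1}$ with $K_{a,b}=(1-w)^{-a}(1-\bar w)^{-b}$. The payoff is that each $K_{a,b}$ has a ``part (1)''-type eigenvalue: running the direct power-series version of the $\phi$-integral in Proposition~\ref{funk} (the Gegenbauer generating function does not apply for $a\neq b$, but the raw binomial expansion does) and the Jacobi integral \eqref{eq:jacobiint} unchanged yields $\nu^{(a,b)}_{j,k}=\tfrac{2\pi^{n+1}\Gamma(n+1-a-b)}{\Gamma(a)\Gamma(b)}\,\tfrac{\Gamma(j+a)\Gamma(k+b)}{\Gamma(j+n+1-b)\Gamma(k+n+1-a)}$, and in particular
$$\nu^{(\alpha-1,\alpha)}_{j,k}=\mu_{j,k}\,\frac{(\alpha-1)(n+1-2\alpha)}{(j+\alpha-1)(k+n+1-\alpha)}\,,\qquad
\nu^{(\alpha,\alpha-1)}_{j,k}=\mu_{j,k}\,\frac{(\alpha-1)(n+1-2\alpha)}{(k+\alpha-1)(j+n+1-\alpha)}\,.$$
Substituting this into $\widetilde\mu_{j,k}\geq\tfrac{2\alpha}{n+1-\alpha}\mu_{j,k}$ and dividing by $\mu_{j,k}(n+1-2\alpha)>0$ lands on exactly the scalar inequality the paper reaches, namely
$$(\alpha-1)\left(\frac{1}{(j+\alpha-1)(k+n+1-\alpha)}+\frac{1}{(k+\alpha-1)(j+n+1-\alpha)}\right)\leq\frac{2}{n+1-\alpha}\,,$$
with equality iff $j=k=0$. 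So your route is correct and slightly cleaner on the eigenvalue side, at the cost of tracking the non-self-adjoint $K_{a,b}$ (they swap under adjoint, $K_{a,b}^*=K_{b,a}$, so only the symmetric sum matters). One caution: your closing remark that the bound should ``follow from termwise monotonicity because $n+1-\alpha>\alpha$'' is too optimistic as stated; the sign of $\alpha-1$ matters, and you need the same short case analysis $\alpha\lessgtr 1$ that the paper performs to close the argument.
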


This theorem will be proved in Subsection \ref{sec:mainproof}.

%%%%%%%%%%%%%%%%%%%%%%%%%%%%%%%%%%%%%%%%%%%%%%%%%%%%%%%%%%%%%%%%%%%%%%%%%%%%%%%%%%%

\subsection{The Funk-Hecke theorem on the complex sphere}\label{sec:funk}

Let $n\in\N$. As before, we consider the sphere $\Sph^{2n+1}$ as a subset of $\C^{n+1}$ and denote coordinates by $(\zeta_1,\ldots,\zeta_{n+1})$ and (non-normalized) measure by $d\zeta$. It is well known that $L^2(\Sph^{2n+1})$ can be decomposed into its $U(n+1)$-irreducible components,
\begin{equation}\label{eq:decomp}
L^2(\Sph^{2n+1})= \bigoplus_{j,k\geq 0} \mathcal H_{j,k} \,.
\end{equation}
The space $\mathcal H_{j,k}$ is the space of restrictions to $\Sph^{2n+1}$ of harmonic polynomials $p(z,\overline z)$ on $\C^{n+1}$ which are homogeneous of degree $j$ in $z$ and degree $k$ in $\overline z$; see \cite{Fo} and references therein.

We shall prove that integral operators whose kernels have the form $K(\zeta\cdot\overline\eta)$ are diagonal with respect to this decomposition and we give an explicit formula for their eigenvalues. In order to state this formula we need the Jacobi polynomials $P_m^{(\alpha,\beta)}$, see \cite[Chapter 22]{AbSt}.

\begin{proposition}\label{funk}
 Let $K$ be an integrable function on the unit ball in $\C$. Then the operator on $\Sph^{2n+1}$ with kernel $K(\zeta\cdot\overline\eta)$ is diagonal with respect to decomposition \eqref{eq:decomp}, and on the space $\mathcal H_{j,k}$ its eigenvalue is given by
\begin{equation}
 \label{eq:funk}
\begin{split}
\frac{\pi^n m!}{2^{n+|j-k|/2} (m+n-1)!}
 & \int_{-1}^1 dt \, (1-t)^{n-1} (1+t)^{|j-k|/2} P_m^{(n-1,|j-k|)}(t) \\
& \qquad\qquad \times \int_{-\pi}^\pi d\phi \, K(e^{-i\phi}\sqrt{(1+t)/2}) \ e^{i(j-k)\phi}   \,,
\end{split}
\end{equation}
where $m:=\min\{j,k\}$.
\end{proposition}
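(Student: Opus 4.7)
The plan is to follow the classical Funk--Hecke strategy: use unitary invariance to reduce to a scalar action on each $\mathcal H_{j,k}$, and then identify the eigenvalue by evaluating on a zonal element.

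\emph{Step 1 (Schur).} First I would observe that the integral operator $T_K$ with kernel $K(\zeta\cdot\overline\eta)$ commutes with the natural $U(n+1)$-action on $L^2(\Sph^{2n+1})$: if $U\in U(n+1)$, then $(U\zeta)\cdot\overline{U\eta}=\zeta\cdot\overline\eta$ and $d\eta$ is $U(n+1)$-invariant, so the change of variables $\eta\mapsto U\eta$ gives $T_K(f\circ U^{-1})=(T_Kf)\circ U^{-1}$. Since each $\mathcal H_{j,k}$ is an irreducible $U(n+1)$-representation (a standard fact), Schur's lemma forces $T_K|_{\mathcal H_{j,k}}=\lambda_{j,k}I$, which establishes diagonality.

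\emph{Step 2 (Zonal harmonic).} To extract $\lambda_{j,k}$ I would pick the zonal element $Z_{j,k}\in\mathcal H_{j,k}$, meaning the unique (up to scale) vector invariant under the stabilizer $U(n)$ of the pole $e_{n+1}:=(0,\dots,0,1)$. Writing $w=\eta_{n+1}$, bihomogeneity of bidegree $(j,k)$ combined with $U(n)$-invariance in the first $n$ coordinates (whose only $U(n)$-invariant on the sphere is $|\eta'|^2=1-|w|^2$) force
\[
Z_{j,k}(\eta)=w^{j-k}P(|w|^2)\quad(j\geq k),\qquad Z_{j,k}(\eta)=\overline{w}^{k-j}P(|w|^2)\quad(k\geq j),
\]
with $P$ a polynomial of degree $m:=\min\{j,k\}$. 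Harmonicity of the extension to $\C^{n+1}$ reduces the determination of $P$ to a Jacobi differential equation in $s=|w|^2$, and one identifies $P(s)=P_m^{(n-1,|j-k|)}(2s-1)$ up to a constant. Passing to polar coordinates $w=re^{i\phi}$ merges both cases into $Z_{j,k}(\eta)=r^{|j-k|}e^{i(j-k)\phi}P_m^{(n-1,|j-k|)}(2r^2-1)$, and in particular $Z_{j,k}(e_{n+1})=P_m^{(n-1,|j-k|)}(1)=\binom{m+n-1}{m}$.

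\emph{Step 3 (Fibration and change of variables).} Evaluating $T_KZ_{j,k}$ at $e_{n+1}$, where $e_{n+1}\cdot\overline\eta=\overline w$, Schur's lemma yields
\[
\lambda_{j,k}\binom{m+n-1}{m}=\int_{\Sph^{2n+1}}K(\overline w)\,Z_{j,k}(\eta)\,d\eta.
\]
Since the integrand depends only on $w$, I would use the fibration $\eta\mapsto w$, whose fiber over $w\in\overline{\mathbb D}$ is a $(2n-1)$-sphere of radius $\sqrt{1-|w|^2}$. For functions of $w$ alone this gives $\int_{\Sph^{2n+1}}f(w)\,d\eta=|\Sph^{2n-1}|\int_{\mathbb D}f(w)(1-|w|^2)^{n-1}\,d^2w$, which I would cross-check by verifying that $f\equiv 1$ reproduces $|\Sph^{2n+1}|=|\Sph^{2n-1}|\pi/n$. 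Then writing $d^2w=r\,dr\,d\phi$ and substituting $t=2r^2-1$ (so $r\,dr=dt/4$, $r^{|j-k|}(1-r^2)^{n-1}=2^{-(n-1)-|j-k|/2}(1+t)^{|j-k|/2}(1-t)^{n-1}$) turns the $d^2w$-integral into the claimed double integral over $(t,\phi)\in[-1,1]\times[-\pi,\pi]$, and collecting the constants via $|\Sph^{2n-1}|=2\pi^n/(n-1)!$ and $\binom{m+n-1}{m}=(m+n-1)!/(m!(n-1)!)$ gives exactly the stated prefactor $\pi^n m!/(2^{n+|j-k|/2}(m+n-1)!)$.

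The only non-routine ingredient is the identification of $Z_{j,k}$ with the Jacobi polynomial in Step 2; this I would either verify directly by substituting the ansatz into the Euclidean Laplace equation on $\C^{n+1}$, or simply cite from the standard description of the decomposition \eqref{eq:decomp} (as in the references collected in \cite{Fo}). Everything else is invariance plus a change of variables.
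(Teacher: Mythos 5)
Your argument is correct and follows essentially the same route as the paper: Schur's lemma gives diagonality, the eigenvalue is extracted by pairing against a zonal element of $\mathcal H_{j,k}$ evaluated at the pole $(0,\dots,0,1)$, and the coarea/change-of-variables $t=2r^2-1$ produces the stated double integral. The paper implements the same idea via the reproducing (projection) kernel $\Phi_{j,k}(\zeta\cdot\overline\eta)$ rather than a single zonal harmonic $Z_{j,k}$, but these are two faces of the same object, and your bookkeeping (the fibration formula, $P_m^{(n-1,|j-k|)}(1)=\binom{m+n-1}{m}$, $|\Sph^{2n-1}|=2\pi^n/(n-1)!$) reproduces the prefactor exactly.
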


\begin{proof}
 The fact that the operator is diagonal follows from Schur's lemma and the irreducibility of the spaces $\mathcal H_{j,k}$. Now we fix $j$ and $k$ and denote the corresponding eigenvalue by $\lambda$. The projection onto $\mathcal H_{j,k}$ is known (see \cite{Fo} for references) to be the integral operator with kernel $\Phi_{j,k}(\zeta\cdot\overline\eta)$, where
$$
\Phi_{j,k}(r e^{i\phi}) := \frac{(M+n-1)!\ (j+k+n)}{|\Sph^{2n+1}| \ n!\ M!} \, r^{|j-k|} \, e^{i(j-k)\phi} \, P_m^{(n-1,|j-k|)}(2r^2-1)
$$
and $m:=\min\{j,k\}$ and $M:=\max\{j,k\}$. In particular, if $Y_{j,k,\mu}$ denotes an orthonormal basis of $\mathcal H_{j,k}$, then
$$
\sum_\mu Y_{j,k,\mu}(\zeta) \overline{ Y_{j,k,\mu}(\eta) } = \Phi_{j,k}(\zeta\cdot\overline\eta) \,.
$$
Hence multiplying the equation 
$$
\int K(\zeta\cdot\overline\eta) \ Y_{j,k,\mu}(\eta) \,d\eta
= \lambda Y_{j,k,\mu}(\zeta)
$$
by $\overline{Y_{j,k,\mu}(\zeta)}$ and summing over $\mu$ gives
$$
\int K(\zeta\cdot\overline\eta) \ \Phi_{j,k}(\overline\zeta\cdot\eta) \,d\eta
= \lambda \Phi_{j,k}(1) \,.
$$
The left side is independent of $\zeta$, since the right side is so, and hence we can assume that $\zeta=(0,\ldots,0,1)$. We arrive at
\begin{equation}\label{eq:ev}
\lambda = \Phi_{j,k}(1)^{-1} \int K(\overline{\eta_{n+1}}) \ \Phi_{j,k}(\eta_{n+1}) \,d\eta \,.
\end{equation}
In order to simplify this expression, we parametrize $\eta\in\Sph^{2n+1}$ as
$$
\eta = (e^{i\phi_1}\omega_1,\ldots, e^{i\phi_{n+1}}\omega_{n+1})
$$
where $-\pi\leq\phi_j\leq\pi$ and $\omega\in\Sph^n$ with $\omega_j\geq 0$. Now we can parametrize $\omega$ as usual,
\begin{align*}
\omega_1 & =\sin\theta_n \cdots \sin\theta_2 \sin\theta_1  \,,\\
\omega_2 & =\sin\theta_n \cdots \sin\theta_2 \cos\theta_1 \,,\\
\omega_j & = ...\,,\\
\omega_n & = \sin\theta_n \cos\theta_{n-1} \,,\\
\omega_{n+1} & = \cos\theta_n \,,
\end{align*}
with angles satisfying $0\leq\theta_j\leq \pi/2$. In this notation \cite[(11.1.8.1)]{ViKl}
$$
d\eta = d\phi_1\cdots d\phi_{n+1} \sin\theta_1\cos\theta_1 \,d\theta_1 \cdots
\sin^{2n-1}\theta_n \cos\theta_n \,d\theta_n \,.
$$
With this parametrization formula \eqref{eq:ev} becomes
\begin{align*}
\lambda & = \frac{|\Sph^{2n-1}|}{\Phi_{j,k}(1)} \int_0^{\pi/2} \,d\theta \sin^{2n-1}\theta \cos\theta \int_{-\pi}^\pi \,d\phi \, K(e^{-i\phi} \cos\theta) \ \Phi_{j,k}(e^{i\phi} \cos\theta) \\
& = \frac{2 \pi^n m!}{(m+n-1)!}
\int_0^{\pi/2} d\theta \,\sin^{2n-1}\theta \cos^{|j-k|+1}\theta \\
& \qquad\qquad\qquad\qquad\times
\int_{-\pi}^\pi d\phi \, K(e^{-i\phi}\cos\theta) \ e^{i(j-k)\phi}  P_m^{(n-1,|j-k|)}(2\cos^2\theta-1) \,.
\end{align*}
Here we used the explicit expression for $\Phi_{j,k}$, the fact that $|\Sph^{2n-1}|= 2\pi^n/(n-1)!\,$ as well as
$P_m^{(\alpha,\beta)}(1)=\Gamma(m+\alpha+1)/(m! \ \Gamma(\alpha+1))$ \cite[(22.2.1)]{AbSt}. The lemma now follows by the change of variables $t=2\cos^2\theta -1$.
\end{proof}

%%%%%%%%%%%%%%%%%%%%%%%%%%%%%%%%%%%%%%%%%%%%%%%%%%%%%%%%%%%%%%%%%%%%%%%%%%%%%%%

The Funk-Hecke formula from Proposition \ref{funk} allows us to compute the eigenvalues of two particular families of operators.

\begin{corollary}\label{ev}
Let $-1<\alpha<(n+1)/2$.
\begin{enumerate}
 \item 
The eigenvalue of the operator with kernel $|1-\zeta\cdot\overline\eta|^{-2\alpha}$ on the subspace $\mathcal H_{j,k}$ is
\begin{equation}
 \label{eq:ev1}
E_{j,k} := \frac{2\pi^{n+1} \Gamma(n+1-2\alpha)}{\Gamma^2(\alpha)}
\frac{\Gamma(j+\alpha)}{\Gamma(j+n+1-\alpha)}
\frac{\Gamma(k+\alpha)}{\Gamma(k+n+1-\alpha)} \,.
\end{equation}
\item
The eigenvalue of the operator with kernel $|\zeta\cdot\overline\eta|^2 |1-\zeta\cdot\overline\eta|^{-2\alpha}$ on the subspace $\mathcal H_{j,k}$ is
\begin{equation}
 \label{eq:ev2}
 E_{j,k} \left( 1- \frac{(\alpha-1)(n+1-2\alpha) \left( 2jk+n(j+k-1+\alpha)\right) }{(j-1+\alpha)(j+n+1-\alpha)(k-1+\alpha)(k+n+1-\alpha)} \right) \,.
\end{equation}
\end{enumerate}
When $\alpha=0$ or $1$, formulas \eqref{eq:ev1} and \eqref{eq:ev2} are to be understood by taking limits with fixed $j$ and $k$.
\end{corollary}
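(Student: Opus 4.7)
Both parts stem from direct application of the Funk--Hecke formula of Proposition~\ref{funk}. Writing $z = e^{-i\phi}\sqrt{(1+t)/2}$ and setting $r := \sqrt{(1+t)/2}$, one has $|1-z|^2 = 1-2r\cos\phi+r^2$. For part (1), I would evaluate the inner $\phi$--integral by multiplying the two binomial series $(1-re^{i\phi})^{-\alpha}(1-re^{-i\phi})^{-\alpha} = \sum_{a,b\geq 0}\frac{(\alpha)_a(\alpha)_b}{a!\,b!}r^{a+b}e^{i(b-a)\phi}$ and reading off the $e^{iM\phi}$ Fourier coefficient, which packages into $r^{|M|}\frac{(\alpha)_{|M|}}{|M|!}\,{}_2F_1(\alpha,\alpha+|M|;|M|+1;r^2)$, where $M=j-k$.

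After the substitution $u=(1+t)/2$ and simplification, the outer integral becomes
\[
\int_0^1 (1-u)^{n-1}\,u^{|M|}\,P_m^{(n-1,|M|)}(2u-1)\,{}_2F_1(\alpha,\alpha+|M|;|M|+1;u)\,du.
\]
The plan is to apply the Rodrigues identity $(1-u)^{n-1}u^{|M|}P_m^{(n-1,|M|)}(2u-1) = \tfrac{(-1)^m}{m!}\tfrac{d^m}{du^m}\bigl[(1-u)^{m+n-1}u^{m+|M|}\bigr]$ and integrate by parts $m$ times (boundary terms vanish for $n\geq 1$). The $m$-th derivative of the hypergeometric shifts its parameters; the resulting integral is of Euler--Beta type, equaling $B(|M|+m+1,m+n)\cdot{}_2F_1(\alpha+m,\alpha+|M|+m;2m+|M|+n+1;1)$. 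Gauss's summation theorem applies thanks to $n+1-2\alpha>0$ and reduces the hypergeometric at $1$ to a ratio of Gamma functions. After collecting Pochhammer cancellations — in particular $(|M|+1)_m\cdot|M|! = \Gamma(|M|+m+1)$ and $(\alpha)_{|M|}(\alpha+|M|)_m = \Gamma(\alpha+|M|+m)/\Gamma(\alpha)$, together with $|M|+m=\max(j,k)$ — the expression collapses to the formula \eqref{eq:ev1} for $E_{j,k}$.

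For part (2), the kernel carries an additional factor $|z|^2 = r^2 = (1+t)/2$, producing an extra $u$ in the integrand:
\[
J_m := \int_0^1 (1-u)^{n-1}\,u^{|M|+1}\,P_m^{(n-1,|M|)}(2u-1)\,{}_2F_1(\alpha,\alpha+|M|;|M|+1;u)\,du.
\]
My preferred route is to reduce $J_m$ to integrals already computed in Part (1) via the Jacobi three-term recurrence for $x\,P_m^{(n-1,|M|)}(x)$ with $x=2u-1$, which yields $u\,P_m^{(n-1,|M|)}(2u-1) = \tfrac{a_m}{2}P_{m+1} + \tfrac{1+b_m}{2}P_m + \tfrac{c_m}{2}P_{m-1}$ with explicit rational coefficients $a_m,b_m,c_m$. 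Each of the three resulting integrals is of the Part (1) shape with $m$ shifted by $0,\pm 1$ and $|M|$ unchanged, hence equals $E_{j',k'}(\alpha)$ for $(j',k')$ obtained from $(j,k)$ by the shifts $(0,0)$ or $(\pm 1,\pm 1)$. Tracking the $m$-dependent pre-factor from \eqref{eq:funk}, one obtains
\[
\lambda_{j,k}^{(2)} = \tfrac{a_m(m+n)}{2(m+1)}E_{j+1,k+1} + \tfrac{1+b_m}{2}E_{j,k} + \tfrac{c_m\,m}{2(m+n-1)}E_{j-1,k-1}.
\]
The $m=0$ boundary case is automatic because $c_m\cdot m$ vanishes there.

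The remaining step, and the main obstacle of the proof, is purely algebraic. Using the ratios $E_{j+1,k+1}/E_{j,k} = (j+\alpha)(k+\alpha)/[(j+n+1-\alpha)(k+n+1-\alpha)]$ and $E_{j-1,k-1}/E_{j,k} = (j+n-\alpha)(k+n-\alpha)/[(j-1+\alpha)(k-1+\alpha)]$, the quotient $\lambda_{j,k}^{(2)}/E_{j,k}$ is an explicit rational function in $j,k,n,\alpha$ which must be shown to equal the bracket in \eqref{eq:ev2}. One clears the common denominator $(j-1+\alpha)(j+n+1-\alpha)(k-1+\alpha)(k+n+1-\alpha)$ and verifies the resulting polynomial identity by direct expansion; the factor $2jk+n(j+k-1+\alpha)$ materializes from combining the cross-terms among the three pieces. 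The limiting formulas at $\alpha=0,1$ then follow by continuity in $\alpha$; for instance at $\alpha=1$ the correction factor vanishes, and the identity is consistent with the fact that for such an $\alpha$ the three-term combination collapses via the specific values of $E_{j\pm 1,k\pm 1}(1)$.
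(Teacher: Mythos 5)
Your Part (1) follows the same skeleton as the paper's, merely reorganized: you sum the $\phi$-Fourier coefficients into a closed ${}_2F_1$ before integrating over $u$, while the paper keeps the $\mu$-series from the Gegenbauer generating function, interchanges sum and integral, integrates term by term with the Rodrigues-type Jacobi identity, and sums via Gauss's theorem at the end. The Rodrigues step, the integration by parts, and Gauss summation appear in both routes and produce the same terminal hypergeometric evaluation, so the two arguments are mathematically equivalent.

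Your Part (2), by contrast, genuinely departs from the paper's. The paper absorbs the extra factor $(1+t)/2$ as a shift $\mu\mapsto\mu+1$ inside the Jacobi integral \eqref{eq:jacobiint}, then resolves the resulting extra polynomial factors by splitting into partial fractions and reapplying \eqref{eq:hypergeom}, treating $m=0$ and $m\geq1$ separately. You instead invoke the Jacobi three-term recurrence to reduce $J_m$ to the already-computed integrals $I_{m-1},I_m,I_{m+1}$ (with the prefactor ratios $(m+n)/(m+1)$ and $m/(m+n-1)$ accounting for the $m$-dependence of \eqref{eq:funk}), so the eigenvalue becomes an explicit rational combination of $E_{j-1,k-1},E_{j,k},E_{j+1,k+1}$. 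This is a clean reuse of Part (1); I spot-checked it (e.g.\ $n=1$, $j=k=1$, $\alpha=1/2$ gives $53/25$ from both your combination and from \eqref{eq:ev2}; and $n=2$, $j=k=0$, $\alpha=1$ gives $1/2$ from both), so the claimed rational identity holds, but you should actually clear denominators and verify it rather than defer. Two small corrections: the closing remark ``at $\alpha=1$ the correction factor vanishes'' is false when $j=0$ or $k=0$, since the denominator in \eqref{eq:ev2} then carries $(\alpha-1)^2$ and the limit is finite and nonzero (at $(j,k)=(0,0)$ the bracket tends to $1/n$). And at $m=0$ the raw Jacobi recurrence coefficient $c_0$ can be $0/0$ when $n=1$ and $|j-k|=0$; the correct statement is simply that for $m=0$ the recurrence has no $P_{-1}$ term, which is what makes the $E_{j-1,k-1}$ contribution drop out.
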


Part (1) of this corollary is well-known. It is proved in \cite{JoWa} and \cite{BrFoMo} by different arguments.

\begin{proof}
By Proposition \ref{funk} we have to evaluate the double integral \eqref{eq:funk} for the two choices $K(z) = |1-z|^{-2\alpha}$ and $K(z) = |z|^2 |1-z|^{-2\alpha}$. Our calculations will be based on three formulas, namely the Gamma function identity \cite[(15.1.1) and (15.1.20)]{AbSt}
\begin{align}\label{eq:hypergeom}
 \sum_{\mu=0}^\infty \frac{\Gamma(a+\mu)\ \Gamma(b+\mu)}{\mu!\ \Gamma(c+\mu)}
= \frac{\Gamma(a)\ \Gamma(b)\ \Gamma(c-a-b)}{\Gamma(c-a)\ \Gamma(c-b)}
\end{align}
for $c>a+b$, the cosine integral
\begin{align}\label{eq:gegenint}
 \int_{-\pi}^\pi \!d\phi \left(1-2r\cos\phi+r^2 \right)^{-\alpha} e^{i(j-k)\phi} 
= \frac{2\pi}{\Gamma^2(\alpha)} \sum_{\mu=0}^\infty r^{|j-k|+2\mu} \frac{\Gamma(\alpha+\mu)\ \Gamma(\alpha+|j-k|+\mu)}{\mu! \ (|j-k|+\mu)!  }
\end{align}
for $0\leq r<1$, and the Jacobi polynomial integral
\begin{align}
 \label{eq:jacobiint}
 & \int_{-1}^1 dt \, (1-t)^{n-1} (1+t)^{|j-k|+\mu} P_m^{(n-1,|j-k|)}(t) \\
& \qquad =
\begin{cases}
 0 & \text{if}\ \mu<m \,, \\
2^{|j-k|+n+\mu} \frac{\mu!}{m! \ (\mu-m)!} \frac{(|j-k|+\mu)! \ (m+n-1)!}{(|j-k|+m+n+\mu)!}
& \text{if}\ \mu\geq m \,.
\end{cases}
\notag
\end{align}

Formula \eqref{eq:jacobiint} follows easily from
$$
P_m^{(n-1,|j-k|)}(t) = \frac{(-1)^m}{2^m m!} (1-t)^{-n+1} (1+t)^{-|j-k|} \frac{d^m}{dt^m}\left( (1-t)^{n-1+m} (1+t)^{|j-k|+m}\right) \,;
$$
see \cite[(22.11.1)]{AbSt}. In order to see \eqref{eq:gegenint}, we use the generating function identity for Gegenbauer polynomials,
$$
\left(1-2r\cos\phi+r^2 \right)^{-\alpha} = \sum_{l=0}^\infty C_l^{(\alpha)}(\cos\phi) r^l
$$
and find
$$
\int_{-\pi}^\pi d\phi \, \left(1-2r\cos\phi+r^2 \right)^{-\alpha} \ e^{i(j-k)\phi}
= 2 \sum_{l=0}^\infty r^l  \int_0^\pi d\phi \, C_l^{(\alpha)}(\cos\phi) \cos(j-k)\phi \,.
$$
For fixed $l$ one can evaluate the $\phi$-integral using \cite[(22.3.12)]{AbSt}
\begin{equation*}
 %\label{eq:gegcheb}
C_l^{(\alpha)}(\cos\phi) = \sum_{\nu=0}^l \frac{\Gamma(\alpha+\nu)\ \Gamma(\alpha+l-\nu)}{\nu! \ (l-\nu)! \ \Gamma^2(\alpha) } \cos(l-2\nu)\phi \,,
\end{equation*}
which leads to \eqref{eq:gegenint}.
\iffalse
Next, we compute the $\phi$ integral for fixed $l$. Because of the parity of $C_l^{(\alpha)}$, the integral vanishes when $l$ and $j-k$ have different parity. Moreover, since $C_l^{(\alpha)}$ is a polynomial of degree $l$, the integral vanishes when $|j-k|>l$. In order to study the remaining cases we put $l=|j-k|+2\mu$ with $\mu\in\N_0$. By \cite[(22.3.12)]{AbSt} we have
\begin{equation*}
 %\label{eq:gegcheb}
C_l^{(\alpha)}(\cos\phi) = \sum_{\nu=0}^l \frac{\Gamma(\alpha+\nu)\ \Gamma(\alpha+l-\nu)}{\nu! \ (l-\nu)! \ \Gamma^2(\alpha) } \cos(l-2\nu)\phi \,.
\end{equation*}
If $j=k$, then $l=2\mu$ and only one $\nu$-term in the above sum contributes to the integral, namely, $\nu=\mu$. Hence the the integral equals
$$
\pi \ \frac{\Gamma(\alpha+\mu)\ \Gamma(\alpha+l-\mu)}{\mu! \ (l-\mu)! \ \Gamma^2(\alpha) } \,.
$$
If $j\neq k$, then exactly two $\nu$-terms contribute to the integral, namely, $\nu=\mu$ and $\nu=l-\mu$. We obtain
\begin{align*}
2 \ \frac{\Gamma(\alpha+\mu)\ \Gamma(\alpha+l-\mu)}{\mu! \ (l-\mu)! \ \Gamma^2(\alpha) }    \int_0^\pi \,d\phi \cos^2 (j-k)\phi
= \pi \ \frac{\Gamma(\alpha+\mu)\ \Gamma(\alpha+l-\mu)}{\mu! \ (l-\mu)! \ \Gamma^2(\alpha) } \,.
\end{align*}
This proves \eqref{eq:gegenint}.
\fi

Up to this point we have verified \eqref{eq:gegenint} and \eqref{eq:jacobiint}. Now we are ready to compute \eqref{eq:funk} with $K(z) = |1-z|^{-2\alpha}$. Using \eqref{eq:gegenint} with $r= \sqrt{(1+t)/2}$, interchanging the $\mu$-sum with the integral and doing the $t$-integration using \eqref{eq:jacobiint}, we obtain
\begin{align*}
E_{j,k} & = \frac{\pi^n m!}{2^{|j-k|+n+\mu} (m+n-1)!} \frac{2\pi}{\Gamma^2(\alpha)}
\sum_{\mu=0}^\infty  \frac{\Gamma(\alpha+\mu)\ \Gamma(|j-k|+\alpha+\mu)}{\mu! \ (|j-k|+\mu)!  } \\
& \qquad\qquad \times \int_{-1}^1 dt \, (1-t)^{n-1} (1+t)^{|j-k|+\mu} P_m^{(n-1,|j-k|)}(t) \\
& = \frac{2\pi^{n+1}}{\Gamma^2(\alpha)}
\sum_{\mu=m}^\infty  \frac{\Gamma(\alpha+\mu)\ \Gamma(|j-k|+\alpha+\mu)}{(\mu-m)! \ (|j-k|+m+n+\mu)!} \\
& = \frac{2\pi^{n+1}}{\Gamma^2(\alpha)}
\sum_{\mu=0}^\infty  \frac{\Gamma(m+\alpha+\mu)\ \Gamma(|j-k|+m+\alpha+\mu)}{\mu! \ (|j-k|+2m+n+\mu)!} \\
& = \frac{2\pi^{n+1}}{\Gamma^2(\alpha)}
\sum_{\mu=0}^\infty  \frac{\Gamma(j+\alpha+\mu)\ \Gamma(k+\alpha+\mu)}{\mu! \ (j+k+n+\mu)!} \\
& = \frac{2\pi^{n+1} \Gamma(n+1-2\alpha)}{\Gamma^2(\alpha)}
\frac{\Gamma(j+\alpha)}{\Gamma(j+n+1-\alpha)}
\frac{\Gamma(k+\alpha)}{\Gamma(k+n+1-\alpha)} \,.
\end{align*}
The last identity used \eqref{eq:hypergeom}.

The computation in the case $K(z)= |z|^2 |1-z|^{-2\alpha}$ is similar but more complicated. The extra factor $|z|^2$ introduces an extra factor $(1+t)/2$ in the $t$ integral. After doing the $\phi$ and the $t$ integral using \eqref{eq:gegenint} and \eqref{eq:jacobiint} we arrive at
\begin{align*}
& \frac{\pi^n m!}{2^{|j-k|+n+1+\mu} (m+n-1)!} \frac{2\pi}{\Gamma^2(\alpha)}
\sum_{\mu=0}^\infty  \frac{\Gamma(\alpha+\mu)\ \Gamma(|j-k|+\alpha+\mu)}{\mu! \ (|j-k|+\mu)!  } \\
& \qquad \qquad \times \int_{-1}^1 dt \, (1-t)^{n-1} (1+t)^{|j-k|+1+\mu} P_m^{(n-1,|j-k|)}(t) \\
& \qquad = \frac{2\pi^{n+1} }{\Gamma^2(\alpha)} 
\sum_{\mu=\max\{m-1,0\}}^\infty \frac{(\mu+1)\ (|j-k|+1+\mu)\ \Gamma(\alpha+\mu)\ \Gamma(|j-k|+\alpha+\mu)}{(\mu+1-m)! \ (|j-k|+m+n+1+\mu)!} \,.
\end{align*}
Now we distinguish two cases according to whether $m=0$ or not. In the first case, the above equals
\begin{align*}
& \frac{2\pi^{n+1} }{\Gamma^2(\alpha)} 
\sum_{\mu=0}^\infty  (|j-k|+1+\mu) \, \frac{\Gamma(\alpha+\mu)\ \Gamma(|j-k|+\alpha+\mu)}{\mu! \ (|j-k|+n+1+\mu)!} \\
& = \frac{2\pi^{n+1} }{\Gamma^2(\alpha)} 
\sum_{\mu=0}^\infty  \left( \frac{\Gamma(\alpha+\mu)\ \Gamma(|j-k|+\alpha+\mu)}{\mu! \ (|j-k|+n+\mu)!} 
- \frac{n\ \Gamma(\alpha+\mu)\ \Gamma(|j-k|+\alpha+\mu)}{\mu! \ (|j-k|+n+1+\mu)!} \right) \,.
\end{align*}
Because of \eqref{eq:hypergeom} this is equal to
\begin{align*}
& \frac{2\pi^{n+1} }{\Gamma^2(\alpha)} \!\left(\! \frac{\Gamma(\alpha)\ \Gamma(|j-k|+\alpha)\ \Gamma(n+1-2\alpha) }{\Gamma(|j-k|+n+1-\alpha)\ \Gamma(n+1-\alpha)}
\!-\! \frac{n\ \Gamma(\alpha)\ \Gamma(|j-k|+\alpha)\ \Gamma(n+2-2\alpha)}{\Gamma(|j-k|+n+2-\alpha)\ \Gamma(n+2-\alpha)} \right) \\
& = E_{j,k} \left( 1 - \frac{n (n+1-2\alpha)}{(|j-k|+n+1-\alpha)(n+1-\alpha)} \right) \,,
\end{align*}
which coincides with the claimed expression.

In the case $m\geq 1$, the eigenvalue is given by
\begin{align*}
 & \frac{2\pi^{n+1} }{\Gamma^2(\alpha)} 
\sum_{\mu=m-1}^\infty  \frac{(\mu+1)(|j-k|+1+\mu) \ \Gamma(\alpha+\mu)\ \Gamma(|j-k|+\alpha+\mu)}{(\mu+1-m)! \ (|j-k|+m+n+1+\mu)!} \\
& = \frac{2\pi^{n+1} }{\Gamma^2(\alpha)} 
\sum_{\mu=0}^\infty  \frac{(\mu+m)(|j-k|+m+\mu) \, \Gamma(m-1+\alpha+\mu)\ \Gamma(|j-k|+m-1+\alpha+\mu)}{\mu! \ (|j-k|+2m+n+\mu)!} \\
& = \frac{2\pi^{n+1} }{\Gamma^2(\alpha)} 
\sum_{\mu=0}^\infty  \frac{(j+\mu)(k+\mu) \, \Gamma(j-1+\alpha+\mu)\ \Gamma(k-1+\alpha+\mu)}{\mu! \ (j+k+n+\mu)!} \\
& = \frac{2\pi^{n+1} }{\Gamma^2(\alpha)} 
\sum_{\mu=0}^\infty  \frac{1}{\mu! \ (j+k+n+\mu)!}
\Big\{ \Gamma(j+\alpha+\mu)\ \Gamma(k+\alpha+\mu) \\
& \qquad - (\alpha-1) \big[ \Gamma(j-1+\alpha+\mu)\ \Gamma(k+\alpha+\mu) + \Gamma(j+\alpha+\mu)\ \Gamma(k-1+\alpha+\mu)\big] \\
& \qquad + (\alpha-1)^2 \Gamma(j-1+\alpha+\mu)\ \Gamma(k-1+\alpha+\mu) \Big\} \,.
\end{align*}
Once again, we use \eqref{eq:hypergeom} in order to simplify the sum and we obtain
\begin{align*}
& \frac{2\pi^{n+1} }{\Gamma^2(\alpha)}
\Bigg\{ \frac{\Gamma(j+\alpha)\ \Gamma(k+\alpha)\ \Gamma(n+1-2\alpha)}{\Gamma(j+n+1-\alpha)\ \Gamma(k+n+1-\alpha)} \\
& \qquad \qquad - (\alpha-1) \Bigg[ \frac{\Gamma(j-1+\alpha)\ \Gamma(k+\alpha)\ \Gamma(n+2-2\alpha)}{\Gamma(j+n+1-\alpha)\ \Gamma(k+n+2-\alpha)} \\
& \qquad \qquad \qquad \qquad\qquad
+ \frac{\Gamma(j+\alpha)\ \Gamma(k-1+\alpha)\ \Gamma(n+2-2\alpha)}{\Gamma(j+n+2-\alpha)\ \Gamma(k+n+1-\alpha)}
\Bigg] \\
& \qquad\qquad + (\alpha-1)^2 \frac{\Gamma(j-1+\alpha)\ \Gamma(k-1+\alpha)\ \Gamma(n+3-2\alpha)}{\Gamma(j+n+2-\alpha)\ \Gamma(k+n+2-\alpha)} \Bigg\} \,,
\end{align*}
which can be simplified to the claimed form \eqref{eq:ev2}.
\iffalse
\begin{align*}
& = E_{j,k} \left( 1 - (\alpha-1)(n+1-2\alpha) \left( \frac{1}{(j-1+\alpha)(k+n+1-\alpha)} + \frac{1}{(j+n+1-\alpha)(k-1+\alpha)}
  \right)  \right. \\
& \qquad \left. +(\alpha-1)^2 \frac{(n+1-2\alpha)(n+2-2\alpha)}{(j-1+\alpha)(j+n+1-\alpha)(k-1+\alpha)(k+n+1-\alpha)} \right) \\
& = E_{j,k} \left( 1 - \frac{(\alpha-1)(n+1-2\alpha)}{(j-1+\alpha)(j+n+1-\alpha)(k-1+\alpha)(k+n+1-\alpha)} \right. \\
& \qquad \qquad \times
\left( (j+n+1-\alpha)(k-1+\alpha) + (j-1+\alpha)(k+n+1-\alpha) - (\alpha-1)(n+2-2\alpha) \right) \\
& = E_{j,k} \left( 1- \frac{(\alpha-1)(n+1-2\alpha) \left( 2jk+n(j+k-1+\alpha)\right) }{(j-1+\alpha)(j+n+1-\alpha)(k-1+\alpha)(k+n+1-\alpha)} \right)
\end{align*}
\fi
\end{proof}

%%%%%%%%%%%%%%%%%%%%%%%%%%%%%%%%%%%%%%%%%%%%%%%%%%%%%%%%%%%%%%%%%%%%%%%%

\subsection{Proof of Theorem \ref{keyineq}}\label{sec:mainproof}

Using that
$$
|1-\zeta\cdot\overline\eta|^2 = 1 - (\zeta\cdot\overline\eta+\overline\zeta\cdot\eta) + |\zeta\cdot\overline\eta|^2 \,,
$$
we see that it is equivalent to prove
\begin{align*}
 & \iint \overline{f(\zeta)} 
\left( \frac{1 - \left|\zeta\cdot\overline{\eta}\right|^2}{|1-\zeta\cdot\overline\eta|^{2\alpha}} 
+ \frac{1}{|1-\zeta\cdot\overline\eta|^{2(\alpha-1)}}\right) f(\eta)\,d\zeta\,d\eta \\
& \qquad \leq \frac{2(n+1-2\alpha)}{n+1-\alpha} \iint \frac{\overline{f(\zeta)}\ f(\eta)}{|1-\zeta\cdot\overline\eta|^{2\alpha}} \,d\zeta\,d\eta \,.
\end{align*}
Both quadratic forms are diagonal with respect to decomposition \eqref{eq:decomp} and their eigenvalues on the subspace $\mathcal H_{j,k}$ are given by Corollary \ref{ev}. For simplicity, we first assume that $\alpha\neq 1$. The eigenvalue of the right side is $2(n+1-2\alpha)E_{j,k}/(n+1-\alpha)$, with $E_{j,k}$ given by \eqref{eq:ev1}, and the eigenvalue of the left side is
$$
E_{j,k} \frac{(\alpha-1)(n+1-2\alpha) \left( 2jk+n(j+k-1+\alpha)\right) }{(j-1+\alpha)(j+n+1-\alpha)(k-1+\alpha)(k+n+1-\alpha)}  + \tilde E_{j,k} \,,
$$
where $\tilde E_{j,k}$ is $E_{j,k}$ with $\alpha$ replaced by $\alpha-1$. Noting that
$$
\tilde E_{j,k} = E_{j,k} \frac{(\alpha-1)^2 (n+1-2\alpha)(n+2-2\alpha)}{(j-1+\alpha)(j+n+1-\alpha)(k-1+\alpha)(k+n+1-\alpha)}
$$
and that $E_{j,k}>0$, we see that the conclusion of the theorem is equivalent to the inequality
\begin{align*}
& \frac{(\alpha-1)(n+1-2\alpha) \left( 2jk+n(j+k-1+\alpha)\right)
+ (\alpha-1)^2 (n+1-2\alpha)(n+2-2\alpha) }{(j-1+\alpha)(j+n+1-\alpha)(k-1+\alpha)(k+n+1-\alpha)} \\ 
& \qquad\leq \frac{2(n+1-2\alpha)}{n+1-\alpha}
\end{align*}
for all $j,k\geq 0$. Since $\alpha<(n+1)/2$, this is the same as
$$
\frac{(\alpha-1)\left( 2jk+n(j+k) +2(\alpha-1)(n+1-\alpha)\right)}{(j-1+\alpha)(j+n+1-\alpha)(k-1+\alpha)(k+n+1-\alpha)} \,
\leq \frac{2}{n+1-\alpha}
$$
or, equivalently,
$$
(\alpha-1)\left( \frac{1}{(j-1+\alpha)(k+n+1-\alpha)} + \frac{1}{(j+n+1-\alpha)(k-1+\alpha)} \right)
\leq \frac{2}{n+1-\alpha} \,.
$$
This inequality is elementary to prove, distinguishing the cases $\alpha>1$ and $\alpha<1$. Finally, the case $\alpha=1$ is proved by letting $\alpha\to 1$ for fixed $j$ and $k$.

Strictness of inequality \eqref{eq:keyineq} for non-constant $f$ follows from the fact that the above inequalities are strict unless $j=k=0$. This completes the proof of Theorem~\ref{keyineq}.
\qed

%%%%%%%%%%%%%%%%%%%%%%%%%%%%%%%%%%%%%%%%%%%%%%%%%%%%%%%%%%%%%%%%%%%%%%%%%%%%%%%

\subsection{Proof of Sobolev inequalities on the sphere}\label{sec:sobq}

\begin{proof}[Proof of Corollary \ref{sobq}]
We define the number $d\in(0,2)$ by $q=2Q/(Q-d)$ and the operator $A_d$ in $L^2(\Sph^{2n+1})$ which acts as multiplation on $\mathcal H_{j,k}$ by
$$
\frac{\Gamma(\tfrac{Q+d}4+j)\ \Gamma(\tfrac{Q+d}4+k)}{\Gamma(\tfrac{Q-d}4+j)\ \Gamma(\tfrac{Q-d}4+k)} \,.
$$
The same duality argument that relates the $\lambda=Q-2$ case of \eqref{eq:mainsph} to \eqref{eq:jlsph} relates the $\lambda=Q-d$ case to the inequality
\begin{equation}
 \label{eq:hlsdual}
(u, A_d u) 
\geq |\Sph^{2n+1}|^{1-2/q} \ \frac{\Gamma(\tfrac{Q+d}4)^2}{\Gamma(\tfrac{Q-d}4)^2} \left( \int_{\Sph^{2n+1}} |u|^q \,d\zeta \right)^{2/q} \,;
\end{equation}
see \cite{BrFoMo} for details. (This can also be obtained from Part (1) of Corollary \ref{ev}.) Hence the claimed inequality will follow if we can prove that
$$
\frac{8d}{(Q-d)(Q-2)} \mathcal E[u] +  \int_{\Sph^{2n+1}} |u|^2 \,d\zeta 
\geq \frac{\Gamma(\tfrac{Q-d}4)^2}{\Gamma(\tfrac{Q+d}4)^2} (u, A_d u) \,.
$$
Since $\mathcal L$ acts on $\mathcal H_{j,k}$ as multiplication by $jk+\tfrac{Q-2}4(j+k)$, this inequality is equivalent to
\begin{equation}\label{eq:goal}
\frac{8d}{(Q-d)(Q-2)} \left( jk+\tfrac{Q-2}4(j+k) \right) + 1 
\geq \frac{\Gamma(\tfrac{Q-d}4)^2}{\Gamma(\tfrac{Q+d}4)^2} \frac{\Gamma(\tfrac{Q+d}4+j)\ \Gamma(\tfrac{Q+d}4+k)}{\Gamma(\tfrac{Q-d}4+j)\ \Gamma(\tfrac{Q-d}4+k)} \,.
\end{equation}
We first prove this inequality for $j=0$, that is, we first show that
\begin{equation}
 \label{eq:j0}
\frac{2d}{Q-d} k + 1
\geq \frac{\Gamma(\tfrac{Q-d}4)}{\Gamma(\tfrac{Q+d}4)} \frac{\Gamma(\tfrac{Q+d}4+k)}{\Gamma(\tfrac{Q-d}4+k)} \,.
\end{equation}
This inequality is proved in \cite[p. 233]{Be2}, but for later reference we reproduce part of the argument. Since \eqref{eq:j0} is an equality at $k=0$ and $k=1$, we only need to prove that the logarithmic derivative with respect to $k$ of the left side is greater than or equal to that of the right side for any $k\geq 1$, that is,
\begin{equation}
 \label{eq:digamma}
\frac{2d}{2d k + Q-d} \geq \psi(\tfrac{Q+d}4+k)-\psi(\tfrac{Q-d}4+k) \,,
\end{equation}
where $\psi=(\ln\Gamma)'$ is the digamma function. This follows from \cite[(38)]{Be2} with $n$ and $q$ in \cite{Be2} replaced by our $Q/2$ and $2Q/(Q-d)$, respectively.

Having proved \eqref{eq:goal} for $j=0$, we shall now prove that the logarithmic derivative of the left side with respect to $j$ is greater than or equal to that of the right side for any $j\geq 1$ and $k\geq 0$, that is,
$$
\frac{8d \left( k+\tfrac{Q-2}4 \right)}{8d \left( jk+\tfrac{Q-2}4(j+k) \right) + (Q-d)(Q-2) }
\geq \psi(\tfrac{Q+d}4+j)-\psi(\tfrac{Q-d}4+j) \,.
$$
Since here the right side is independent of $k$, we can take the infimum of the left side over $k$. Using the fact that $d\leq 2$ one easily sees that the left side is increasing with respect to $k\geq 0$. Hence we only need to prove the inequality with $k=0$,
$$
\frac{2d }{2d j + Q-d} \geq \psi(\tfrac{Q+d}4+j)-\psi(\tfrac{Q-d}4+j) \,,
$$
but this is again \eqref{eq:digamma}. This completes the proof of \eqref{eq:goal}.

To show that equality holds \emph{only} for constant functions, we examine the preceeding proof and see that \eqref{eq:goal} is strict unless $(j,k)$ is $(0,0)$, $(0,1)$ or $(1,0)$. However, in the two latter cases \eqref{eq:hlsdual} is strict, as seen from Theorem \ref{mainsph}. This proves the corollary.
\end{proof}

%%%%%%%%%%%%%%%%%%%%%%%%%%%%%%%%%%%%%%%%%%%%%%%%%%%%%%%%%%%%%%%%%%%%%%%%%%%%%%%

\subsection{Proofs of the endpoint inequalities}

\begin{proof}[Sketch of proof of Corollary \ref{mainlog}]
 The first part of the proof is a by now standard differentiation argument; for some technical details we refer, e.g., to \cite[Thm. 8.14]{LiLo}. We subtract $|\Sph^{2n+1}|^2 = \iint f(\zeta) g(\eta)\,d\zeta\,d\eta$ from each side in \eqref{eq:mainsph} and divide by $\lambda$. In the limit $\lambda\to 0$ we obtain \eqref{eq:mainlog}.

In order to see that the constant $|\Sph^{2n+1}|/Q$ is sharp, we take $f(\zeta)=g(\zeta)=1+\epsilon\re\zeta_1$ as trial functions. After some computations we find that \eqref{eq:mainlog} is an equality up to order $\epsilon^2$ as $\epsilon\to 0$. (A limiting version of Corollary \ref{ev} is helpful for the computation of the integrals.) 
\end{proof}

\begin{proof}[Sketch of proof of Corollary \ref{mainent}]
 Indeed, we first note that the constant $\tilde D_{n,\lambda}$ on the right side of \eqref{eq:mainsph} satisfies
$$
\tilde D_{n,\lambda} \, |\Sph^{2n+1}|^{(2-p)/p} = \int_{\Sph^{2n+1}} \frac{d\zeta}{|1-\zeta_{n+1}|^{\lambda/2}} \,,
$$
and therefore for any non-negative $f$ with $\|f\|_2^2=|\Sph^{2n+1}|$,
$$
\frac12 \iint_{\Sph^{2n+1}\times\Sph^{2n+1}} \frac{f(\zeta)^2 +f(\eta)^2}{|1-\zeta\cdot\overline\eta|^{\lambda/2}} \,d\zeta\,d\eta = \tilde D_{n,\lambda} \, |\Sph^{2n+1}|^{2/p} \,.
$$
Subtracting this from each side of \eqref{eq:mainsph}, we see that the left side of \eqref{eq:mainent} with exponent $Q/2$ replaced by $\lambda/2$ is bounded from below by
$$
2 \tilde D_{n,\lambda} \left( |\Sph^{2n+1}|^{2/p} - \|f\|_p^2 \right) \,.
$$
Inequality \eqref{eq:mainent} now follows by recalling the explicit expression of $\tilde D_{n,\lambda}$ and letting $\lambda\to Q$.

To check that the constant is sharp, we take $f(\zeta) = \sqrt{1-\epsilon^2} + \epsilon\re\zeta_1$ and check (using, e.g., the calculations in Corollary \ref{ev}) that \eqref{eq:mainent} is an equality up to order $\epsilon^2$ as $\epsilon\to 0$.
\end{proof}

%%%%%%%%%%%%%%%%%%%%%%%%%%%%%%%%%%%%%%%%%%%%%%%%%%%%%%%%%%%%%%%%%%%%%%%%%%%%
%%%%%%%%%%%%%%%%%%%%%%%%%%%%%%%%%%%%%%%%%%%%%%%%%%%%%%%%%%%%%%%%%%%%%%%%%%%%

\appendix

\section{Equivalence of Theorems \ref{main} and \ref{mainsph}}\label{sec:equiv}

In this appendix we consider the Cayley transform $\mathcal C: \Hei^n \to \Sph^{2n+1}$ and its inverse $\mathcal C^{-1}:  \Sph^{2n+1}\to \Hei^n$ given by
\begin{align*}
 \mathcal C(z,t) & = \left(\frac{2z}{1+|z|^2+it}, \frac{1-|z|^2-it}{1+|z|^2+it} \right) \,, \\
\mathcal C^{-1}(\zeta) & = \left(\frac{\zeta_1}{1+\zeta_{n+1}},\ldots,\frac{\zeta_n}{1+\zeta_{n+1}}, \im \frac{1-\zeta_{n+1}}{1+\zeta_{n+1}} \right) \,.
\end{align*}
The Jacobian of this transformation (see, e.g., \cite{BrFoMo}) is
$$
J_{\mathcal C}(z,t) = \frac{2^{2n+1}}{((1+|z|^2)^2+t^2)^{n+1}} \,,
$$
which implies that
\begin{equation}
 \label{eq:change}
\int_{\Sph^{2n+1}} \phi(\zeta) \,d\zeta = \int_{\Hei^n} \phi(\mathcal C(u)) J_{\mathcal C}(u) \,du
\end{equation}
for any integrable function $\phi$ on $\Sph^{2n+1}$.

We now explain the equivalence of \eqref{eq:main} and \eqref{eq:mainsph}, which depends on $\lambda$ and on $p$, which is related to $\lambda$ by $p=2Q/(2Q-\lambda)$. There is a one-to-one correspondence between functions $f$ on $\Sph^{2n+1}$ and functions $F$ on $\Hei^n$ given by
\begin{equation}
 \label{eq:corres}
F(u)=|J_{\mathcal C}(u)|^{1/p} f(\mathcal C(u)) \,.
\end{equation}
It follows immediately from \eqref{eq:change} that $f\in L^p(\Sph^{2n+1})$ if and only if $F\in L^p(\Hei^n)$, and in this case $\|f\|_p = \|F\|_p$. Moreover, using the fact that
$$
|1-\zeta\cdot\overline\eta| = 2 \left( (1+|z|^2)^2+t^2 \right)^{-1/2} |u^{-1} v|^2 \left( (1+|z'|^2)^2+(t')^2 \right)^{-1/2}
$$
for $\zeta=\mathcal C(u)=\mathcal C(z,t)$ and $\eta=\mathcal C(v) = \mathcal C(z',t')$, one easily verifies that
$$
\iint_{\Hei^n\times\Hei^n} \frac{\overline{F(u)}\ F(v)}{|u^{-1} v|^\lambda} \,du\,dv 
= 2^{-n\lambda/Q} \iint_{\Sph^{2n+1}\times\Sph^{2n+1}} \frac{\overline{f(\zeta)}\ f(\eta)}{|1-\zeta\cdot\overline\eta|^{\lambda/2}} \,d\zeta\,d\eta \,.
$$
This shows that the sharp constants in \eqref{eq:main} and \eqref{eq:mainsph} coincide up to a factor of $2^{-n\lambda/Q}$ and that there is a one-to-one correspondence between optimizers. In particular, the function $f\equiv 1$ on $\Sph^{2n+1}$ corresponds to the function
$$
|J_{\mathcal C}(u)|^{1/p} = 2^{(2n+1)(2Q-\lambda)/4(n+1)} H(u)
$$
on $\Hei^n$ with $H$ given in \eqref{eq:opt}.

Similarly, when $p=2Q/(Q-2)$, and $F$ and $f$ are related via \eqref{eq:corres}, then
\begin{equation}
 \label{eq:corressob}
\begin{split}
& \frac14 \sum_{j=1}^n \int_{\Hei^n} \left( \left| \left( \frac\partial{\partial x_j} +2 y_j \frac\partial{\partial t} \right) F \right|^2 + \left| \left( \frac\partial{\partial y_j} -2 x_j \frac\partial{\partial t} \right) F\right|^2 \right) \,du  \\
&\qquad = 2^{1/(n+1)} \frac12 \int_{\Sph^{2n+1}} \left( \sum_{j=1}^{n+1} \left( |T_j f|^2 + |\overline{T_j} f|^2 \right) + \frac{n^2}2 |f|^2 \right) \,d\zeta \,.
\end{split}
\end{equation}
This can be checked by computation, cf. also \cite{JeLe1,BrFoMo}.

%%%%%%%%%%%%%%%%%%%%%%%%%%%%%%%%%%%%%%%%%%%%%%%%%%%%%%%%%%%%%%%%%%%%%%%%%%%

\section{The center of mass condition}\label{sec:com}

Here, we prove that by suitable inequality preserving transformation of $\Sph^{2n+1}$ we may assume the center of mass conditions given in \eqref{eq:comjl} and \eqref{eq:com}.

We shall define a family of maps $\gamma_{\delta,\xi}:\Sph^{2n+1}\to\Sph^{2n+1}$ depending on two parameters $\delta>0$ and $\xi\in\Sph^{2n+1}$. To do so, we denote dilation on $\Hei^n$ by $\mathcal S_\delta$, that is, $\mathcal S_\delta(u)=\delta u$. Moreover, for any $\xi\in\Sph^{2n+1}$ we choose a unitary $(n+1)\times(n+1)$ matrix $U$ such that $U\xi=(0,\ldots,0,1)$ and we put
$$
\gamma_{\delta,\xi}(\zeta) := U^* \mathcal C\left(\mathcal S_\delta\left(\mathcal C^{-1}\left(U\zeta\right)\right)\right)
$$
for all $\zeta\in\Sph^{2n+1}$. This transformation depends only on $\xi$ (and $\delta$) and not on the particular choice of $U$. Indeed, an elementary computation shows that
\begin{equation}
 \label{eq:gamma}
\begin{split}
\gamma_{\delta,\xi}(\zeta) = &\left( \frac{2\delta(1+\xi\cdot\overline{\zeta})}{|1+\zeta\cdot\overline\xi|^2+\delta^2 (1-|\zeta\cdot\overline\xi|^2-2i\im(\zeta\cdot\overline\xi))} \right) \left(\zeta- (\zeta\cdot\overline\xi) \ \xi \right) \\
& + \left( \frac{|1+\zeta\cdot\overline\xi|^2-\delta^2 (1-|\zeta\cdot\overline\xi|^2-2i\im(\zeta\cdot\overline\xi))}
{|1+\zeta\cdot\overline\xi|^2+\delta^2 (1-|\zeta\cdot\overline\xi|^2-2i\im(\zeta\cdot\overline\xi))} \right) \xi \,.
\end{split}
\end{equation}

\begin{lemma}
 \label{com}
Let $f\in L^1(\Sph^{2n+1})$ with $\int_{\Sph^{2n+1}} f(\zeta) \,d\zeta\neq 0$. Then there is a transformation $\gamma_{\delta,\xi}$ of $\Sph^{2n+1}$ such that
$$
\int_{\Sph^{2n+1}} \gamma_{\delta,\xi}(\zeta) f(\zeta) \,d\zeta = 0 \,.
$$
\end{lemma}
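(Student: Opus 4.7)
The plan is to prove the lemma by a topological no-retract argument in the spirit of Hersch. Replacing $f$ by $f/\int f$, one may assume $\int_{\Sph^{2n+1}} f\,d\zeta = 1$. I will reparametrize the family of transformations $\gamma_{\delta,\xi}$ by points $\mu$ in the closed unit ball $\overline B \subset \C^{n+1}$ via
$$ \xi(\mu)=\mu/|\mu|\,, \qquad \delta(\mu)=\frac{1-|\mu|}{1+|\mu|}\,, $$
so that $\mu=0$ corresponds to $\delta=1$ (the identity, using that $\gamma_{1,\xi}=\mathrm{id}$ for every $\xi$, which is immediate from the Cayley-conjugation definition) and $|\mu|=1$ to the degenerate limit $\delta\to 0$, and then study
$$ \Phi(\mu) := \int_{\Sph^{2n+1}} \gamma_{\delta(\mu),\xi(\mu)}(\zeta)\, f(\zeta)\,d\zeta\,. $$

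The first task is to show that $\Phi: \overline B \to \C^{n+1}$ is continuous. A direct inspection of formula \eqref{eq:gamma} yields two key pointwise limits: (i) as $\delta\to 1$ with $\xi$ arbitrary, $\gamma_{\delta,\xi}(\zeta)\to\zeta$ whenever $\zeta\neq -\xi$; and (ii) as $\delta\to 0$ with $\xi\to\mu^\ast\in\Sph^{2n+1}$, $\gamma_{\delta,\xi}(\zeta)\to\mu^\ast$ whenever $\zeta\neq -\mu^\ast$. Both follow from elementary asymptotics of the coefficients of $\xi$ and of $\zeta-(\zeta\cdot\overline\xi)\xi$ in \eqref{eq:gamma}. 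Combined with the uniform bound $|\gamma_{\delta,\xi}(\zeta)|\equiv 1$ and dominated convergence (recall $f\in L^1$), this delivers continuity of $\Phi$ at every interior point (including $\mu=0$, where $\xi(\mu)$ is itself discontinuous but the problematic exceptional point $-\xi$ contributes zero measure), and on $\partial B$, where the boundary value is $\Phi(\mu) = \mu \int f = \mu$.

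To conclude, suppose for contradiction that $\Phi(\mu)\neq 0$ for every $\mu\in\overline B$. Then $\mu\mapsto\Phi(\mu)/|\Phi(\mu)|$ defines a continuous retraction $\overline B\to\partial B=\Sph^{2n+1}$, contradicting Brouwer's no-retract theorem. Hence $\Phi(\mu_\ast)=0$ for some $\mu_\ast\in\overline B$, and since $\Phi|_{\partial B}(\mu)=\mu\neq 0$, one has $\mu_\ast\in B$. Setting $(\delta,\xi) = (\delta(\mu_\ast),\xi(\mu_\ast))$ produces the required transformation. The main point of care in the whole argument is the continuity of $\Phi$ up to the boundary and through $\mu=0$; once the asymptotic limits (i)--(ii) are extracted from \eqref{eq:gamma}, the rest is routine.
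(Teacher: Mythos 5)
Your argument is correct and is essentially the paper's proof: both parametrize the family $\gamma_{\delta,\xi}$ by the closed unit ball so that the center corresponds to $\delta=1$ (the identity) and the boundary to $\delta\to0$, establish continuity of the vector-valued integral up to the boundary where it restricts to the identity map, and then conclude by a Brouwer-type argument. The only cosmetic differences are your explicit formula $\delta(\mu)=(1-|\mu|)/(1+|\mu|)$ in place of the paper's $\delta=1-r$ and your invocation of the no-retract theorem where the paper cites Brouwer's fixed point theorem — these are interchangeable.
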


\begin{proof}
We may assume that $f\in L^1(\Sph^{2n+1})$ is normalized by $\int_{\Sph^{2n+1}} f(\zeta) \,d\zeta=1$. We shall show that the $\C^{n+1}$-valued function
$$
F(r\xi) := \int_{\Sph^{2n+1}} \gamma_{1-r,\xi}(\zeta) f(\zeta) \,d\zeta\,,
\qquad 0< r< 1\,, \xi\in\Sph^{2n+1} \,, 
$$
has a zero. First, note that because of $\gamma_{1,\xi}(\zeta)= \zeta$ for all $\xi$ and all $\zeta$, the limit of $F(r\xi)$ as $r\to 0$ is independent of $\xi$. In other words, $F$ is a continuous function on the open unit ball of $\R^{2n+2}$. In order to understand its boundary behavior, one easily checks that for any $\zeta\neq-\xi$ one has $\lim_{\delta\to 0} \gamma_{\delta,\xi}(\zeta)= \xi$, and that  this convergence is uniform on $\{(\zeta,\xi) \in \Sph^{2n+1}\times\Sph^{2n+1} :\ |1+\zeta\cdot\overline\xi| \geq \epsilon\}$ for any $\epsilon>0$. This implies that
\begin{equation}
 \label{eq:brouwerlimit}
\lim_{r\to 1} F(r\xi) = \xi
\qquad\text{uniformly in}\ \xi\,.
\end{equation}
Hence $F$ is a continuous function on the \emph{closed} unit ball, which is the identity on the boundary. The assertion is now a consequence of Brouwer's fixed point theorem.
\end{proof}

In the proof of Theorem \ref{mainjl} we use Lemma \ref{com} with $f=|u|^q$. Then the new function $\tilde u(\zeta) = |J_{\gamma^{-1}}(\zeta)|^{1/q} u(\gamma^{-1}(\zeta))$, with $\gamma=\gamma_{\delta,\xi}$ of Lemma \ref{com}, satisfies the center of mass condition \eqref{eq:comjl}. Moreover, since rotations of the sphere, the Cayley transform $\mathcal C$ and the dilations $\mathcal S_{\delta}$ leave the inequality invariant, $u$ can be replaced by $\tilde u$ in \eqref{eq:mainsph} without changing the values of each side.

In particular, if $w$ were an optimizer our proof in Section~\ref{sec:jl} shows that the corresponding $\tilde w$ is a constant, which means that the original $w$ is a constant times $|J_\gamma(\zeta)|^{1/q}$. It is now a matter of computation, which has fortunately been done in \cite[(1.14)]{BrFoMo} (based on \cite{JeLe}), to verify that all such functions have the form of \eqref{eq:optjl}.

Conversely, let us verify that all the functions given in \eqref{eq:optjl} are optimizers. By the rotation invariance of inequality \eqref{eq:mainjl}, we can restrict our attention to the case $\xi= (0,\ldots,0,r)$ with $0<r<1$. These functions correspond via the Cayley transform, \eqref{eq:corres}, to dilations of a constant times the function $H$ in \eqref{eq:opt}. Because of the dilation invariance of inequality \eqref{eq:sobjl} and because of the fact that we already know that $H$, which corresponds to the constant on the sphere, is an optimizer, we conclude that any function of the form \eqref{eq:optjl} is an optimizer.

We have discussed the derivative (Sobolev) version of the $\lambda=Q-2$ case of \eqref{eq:mainsph}. Exactly the same considerations show the invariance of the fractional integral for all $0<\lambda<Q$.

%%%%%%%%%%%%%%%%%%%%%%%%%%%%%%%%%%%%%%%%%%%%%%%%%%%%%%%%%%%%%%%%%%%

\subsection*{Acknowledgements} We thank Richard Bamler for valuable help with Appendix \ref{sec:com}.

%%%%%%%%%%%%%%%%%%%%%%%%%%%%%%%%%%%%%%%%%%%%%%%%%%%%%%%%%%%%%%%%%%%%%%%%%%%%%%%%%%%%%%%%%%%

\bibliographystyle{amsalpha}

\end{document}